\def\sphere{S^{n-1}}
\def\Rn{{\mathbb R^n}}
\def\R{\mathbb{R}}
\def\A{\mathds{A}}
\def\B{\mathds{B}}
\def\P{\mathbb{P}}
\def\Oc{\Omega_{C^\circ}}
\def\copolar{\A^{\diamond}_C}
\newtheorem{theorem}{Theorem}[section]
\newtheorem{prop}[theorem]{Proposition}
\newtheorem{problem}[theorem]{Problem}
\newtheorem{lemma}[theorem]{Lemma}
\newtheorem{remark}[theorem]{Remark}
\newtheorem{proposition}[theorem]{Proposition}
\newtheorem{corollary}[theorem]{Corollary}
\newtheorem{definition}[theorem]{Definition}
\numberwithin{equation}{section}
\def\sphere{S^{n-1}}
\def\vol{V_n}
\def\bt{\begin{theorem}}
\def\et{\end{theorem}}
\def\be{\begin{equation}}
\def\ee{\end{equation}}
\def\bl{\begin{lemma}}
\def\el{\end{lemma}}
\def\br{\begin{remark}}
\def\er{\end{remark}}
\def\bc{\begin{corollary}}
\def\ec{\end{corollary}}
\def\bd{\begin{definition}}
\def\ed{\end{definition}}
\def\bp{\begin{proposition}}
\def\ep{\end{proposition}}
\def\Ln{\mathscr{L}_C}
\begin{document}
\title{The $L_p$ dual Minkowski problem for unbounded closed convex sets
\footnote{Keywords: $C$-compatible set, Monge-Amp\`{e}re equation, $L_p$ Alexandrov problem, $L_p$ dual Brunn-Minkowski theory, $L_p$ dual Minkowski problem.}}
\author{Wen Ai, Yunlong Yang and Deping Ye}
\date{}
\maketitle
	
\begin{abstract}
The central focus of this paper is the $L_p$ dual Minkowski problem for $C$-compatible sets, where $C$ is a pointed closed convex cone in $\mathbb{R}^n$ with nonempty interior. Such a problem deals with the characterization of the $(p, q)$-th dual curvature measure of a $C$-compatible set. It produces new
Monge-Amp\`{e}re equations for unbounded convex hypersurface, often defined over open domains and with non-positive unknown convex functions. 
Within the family of $C$-determined sets, the $L_p$ dual Minkowski problem is solved for $0\neq p\in \mathbb{R}$ and $q\in \mathbb{R}$; while it is solved for the range of 
$p\leq 0$ and $p<q$
within the newly defined family of $(C, p, q)$-close sets. When $p\leq q$, we also obtain some results regarding the uniqueness of solutions to the $L_p$ dual Minkowski problem for $C$-compatible sets. 
\vskip 2mm 2020 Mathematics Subject Classification: 52A20, 52A39.
\end{abstract}

\section{Introduction}
The Brunn-Minkowski theory of convex bodies (i.e., compact convex subsets of $\Rn$ with nonempty interiors) is a fundamental area in mathematics originating from the combination of volume and Minkowski addition, which leads to many far-reaching results, such as the Brunn-Minkowski inequality, the Minkowski first
inequality, the Alexandrov-Fenchel inequality, the classical Minkowski problem and its solutions, etc
(see Schneider \cite{Sch14} for a comprehensive introduction of this theory). These results have found important applications in various areas of mathematics, e.g.,  analysis, geometry, probability, and many
more. Dual to the Brunn-Minkowski theory of convex bodies,  Lutwak in \cite{L75, L88} defined the radial additions of convex bodies and the intersection bodies, and proved the dual Brunn-Minkowski and Minkowski inequalities for convex bodies. (Note that the dual theory also works for star bodies). These results lead to the resolution of the famous Busemann-Petty problem \cite{Gar94, GKS99, Zhang99}. Recently, a breakthrough in the dual Brunn-Minkowski theory has been made by Huang, Lutwak, Yang and Zhang in \cite{HLYZ2016},  where the dual curvature measures of convex bodies were obtained by the variation of the $q$-th dual volume 
in terms of
the Minkowski addition. The introduction of the dual curvature measures of convex bodies not only gives the long-time missing geometric measures in the dual  Brunn-Minkowski theory, but also leads to the dual Minkowski problem for convex bodies, which at the first time brings the radial functions of convex bodies into the Minkowski type problems. Since then, many important contributions to the dual Minkowski problem have been made, see e.g., \cite{BHP18, BLYZZ19, CL18, HP18, HJ19, JW17, LSW20, WFZ19, WZ20, Zhao17, Zhao18}.

The above-mentioned Brunn-Minkowski theory and its dual theory deal with compact convex sets in $\Rn$. A natural question to ask is:  
\textit{what are the analogous theories for unbounded convex sets in $\Rn$?}
Unbounded closed convex sets have demonstrated their fundamental significance in various areas, including differential geometry, partial differential equations, commutative algebra, and singularity theory, see e.g., \cite{KK14,R00,R01,VK85}.  The first step towards the Brunn-Minkowski theory for unbounded convex sets in $\Rn$ was made by Schneider in \cite{Sch18} 
following from the pioneer works \cite{KT} by Khovanski\u{\i} and Timorin, and \cite{MR} by Milman and Rotem; while the dual Brunn-Minkowski theory for unbounded convex sets in $\Rn$ was recently started by Li, Ye and Zhu in \cite{LYZ22}. The unbounded convex subsets of interest are the $C$-compatible sets (including the $C$-close sets and $C$-full sets by Schneider \cite{Sch18,Sch21}) introduced in \cite{LYZ22}. Hereafter, we call  $C\subset \mathbb{R}^n$ a  pointed closed convex cone, if
$C=\{ \lambda x : \lambda \geq 0\,\, \text{and}\,\, x \in C\}$  is a closed and convex set such that $C$ has nonempty interior and $C \cap (-C)=\{o\}$ with $o$ being the origin of $\mathbb{R}^n$. The polar cone $C^{\circ}$  of a pointed closed convex cone $C$ is given by
\begin{align} 
C^\circ=\left\{ x \in \mathbb{R}^n : x \cdot y \leq 0 \,\,\, \text{for all}\,\,\, y \in C \right\}, \label{def-polar-cone} \end{align}  where $x \cdot y$ represents the inner product
of $x,y\in\mathbb{R}^n$. An unbounded closed convex set $o\notin E\subset C$ is called $C$-compatible \cite{LYZ22} if \begin{eqnarray}
E=C\cap  \bigcap_{u\in \Oc}  \Big\{H_u^{-}:   E\subseteq H_u^-\Big\}. \label{def-1-1-2}
\end{eqnarray} Hereafter, $\Oc=\sphere \cap \text{int}C^\circ$ is the intersection of $S^{n-1}$, the unit sphere in $\Rn$, and 
$\text{int} C^{\circ}$, the interior of $C^{\circ}$. We also use $H^-_u$ for  lower half space with unit outer normal $u\in \sphere$. 
By a $C$-close set, we mean a $C$-compatible set $\A$ with  $\vol(C \,\backslash \A)$ being finite. If $\A$ is a $C$-close set, then 
its complement set $A=C\,\backslash \A$ is called a $C$-coconvex set. If in addition $A=C\,\backslash \A$ is bounded, then $\A$ will be called a $C$-full set. 

Let $\Ln$ be  the collection of $C$-compatible sets. It will play essential roles in the dual theory of unbounded convex sets in $\mathbb{R}^n$. For $\A\in \Ln$, its support and radial functions, denoted by $h_C(\A, \cdot):\Oc\rightarrow (- \infty, 0)$ and $\rho_C(\A, \cdot): \Omega_C\rightarrow (0, \infty]$, can be defined by  
\begin{align}
h_C(\A, u)&=\sup\{x \cdot u: x \in \A\}\,\,\,\text{for}\,\,\,u\in \Oc, \label{support-1-A} \\ 
\rho_C(\A, v)&=\sup\{r>0: rv\in C\,\backslash \A\}\,\,\,\text{for}\,\,\,v\in \Omega_C. \label{radial-1-A}\end{align}
Here $\Omega_C=\sphere \cap \text{int}C$. 
With the help of radial function,  Li, Ye and Zhu \cite{LYZ22}  defined the $q$-th dual volume of $C$-compatible set $\A\in \Ln$ as follows:
\begin{equation*} 
\widetilde{V}_q(\A)=\frac{1}{n}\int_{\Omega_C}\rho_C(\A, v)^qdv
\end{equation*}  for $0\neq q \in \mathbb{R}$.  If $\widetilde{V}_q(\A)<\infty$, then $\A$ is called a $(C, q)$-close set. In particular, when  $q=n$, $\widetilde{V}_n(\A)=V_n(C\,\backslash\A)$ is the volume of $C\,\backslash\A$, and hence a $(C, n)$-close set is a $C$-close set. 

The $L_p$ addition of $C$-coconvex sets, called the $p$-co-sum \cite{Sch18, YYZ22}, is used as an algebraic operation to form new $C$-coconvex sets. 
Let $\A_1$ and $\A_2$ be two $C$-close sets. 
For $p\in (0, 1]$,
the $p$-co-sum of $C$-coconvex sets $A_1=C\,\backslash \A_1$ and $A_2=C\,\backslash \A_2$, denoted by $A_1 \oplus_p A_2$, is given as the following:
\begin{equation}
C\,\backslash (A_1 \oplus_p A_2)=C \cap \bigcap_{u\in \Oc} \big\{ x \in \mathbb{R}^n: x\cdot u\leq -\overline{h}_C(A_1 \oplus_p A_2, u)\big\}, \label{co-p-sum-def-1}
\end{equation}
where $\overline{h}_C(A, u)=-h_C(\A, u)\geq 0$
denotes the support function of $A=C\,\backslash \A$, and 
\begin{align} \overline{h}_C(A_1 \oplus_p A_2, u)=[\overline{h}_C(A_1, u)^p+\overline{h}_C(A_2, u)^p]^\frac{1}{p}\,\,\,\text{for}\,\,\, u \in \Oc \label{co-p-sum-def-2} \end{align}
is the support function of $A_1 \oplus_p A_2$.  
When $p=1$, it is the co-sum of $C$-coconvex sets formulated by Schneider \cite{Sch18}, written as $A_1 \oplus A_2$. Fundamental results in the $L_p$ Brunn-Minkowski theory for $C$-coconvex sets include, for example,  the $L_p$ Brunn-Minkowski inequality for $p=1$ by Schneider \cite{Sch18} and for $p\in (0, 1)$ by Yang, Ye, and Zhu \cite{YYZ22}:
\begin{equation}\label{LpBMC}
V_n(A_1 \oplus_p A_2)^\frac{p}{n} \leq V_n(A_1)^\frac{p}{n}+V_n(A_2)^\frac{p}{n},
\end{equation}
with equality if and only if $A_1=\alpha A_2$ for some $\alpha>0$. A consequence of  \eqref{LpBMC} is the $L_p$ Minkowski inequality for $C$-coconvex sets, for $p=1$ by Schneider \cite{Sch18} and $p\in (0, 1)$ by Yang, Ye, and Zhu \cite{YYZ22}: 
\begin{equation}\label{LpMC}
\overline{V}_p(A_1, A_2)=\frac{1}{n}\int_{\Oc} \overline{h}_C(A_2, u)^p d\overline{S}_{n-1, p} (A_1, u)
\leq V_n(A_1)^\frac{n-p}{n} V_n(A_2)^\frac{p}{n},
\end{equation}
with equality if and only if $A_1=\alpha A_2$ for some $\alpha>0$. Here,  $\overline{S}_{n-1, p} (A, \cdot)=S_{n-1, p} (\A, 
\cdot)$ is the $L_p$ surface area measures of $A$ and $\A$, which can be formulated by (say for $\A$): for $p=1$ \cite{Sch18},  
\begin{equation*}
S_{n-1}(\A, \eta)=\mathscr{H}^{n-1}(\nu^{-1}_\A(\eta))\ \ \ \mathrm{for\ every\ Borel\ set\ } \eta \subseteq \Oc,
\end{equation*} where $\mathscr{H}^{n-1}$ denotes the $(n-1)$-dimensional Hausdorff measure and $\nu^{-1}_\A$ is the reverse Gauss map of $\A$, and  for $p\in \R$ ($p=0$ in \cite{Sch18} and $p\neq 0$ in \cite{YYZ22}), 
\begin{equation}\label{Lp-measure-support}
dS_{n-1, p}(\A, \cdot)=(-h_C(\A, \cdot))^{1-p}dS_{n-1}(\A, \cdot). 
\end{equation} It is worth to mention that the above mentioned inequalities for $C$-coconvex sets are similar to their analogues of convex bodies with opposite directions of inequalities. 
 
A direct application of the $L_p$ Minkowski inequality \eqref{LpMC} is the establishment of the uniqueness of solutions to the $L_p$ Minkowski problem for $C$-close sets raised by
Yang, Ye and Zhu \cite{YYZ22} and for $p=1$ by Schneider \cite{Sch18}. By the $L_p$ Minkwoski problem for $C$-close sets, we mean the characterization of the $L_p$ surface area measures of $C$-close sets. Progress on this problem include the existence of solutions to: the Minkowski problem (i.e., $p=1$) by Schneider \cite{Sch18, Sch21, Sch23} for finite and/or infinite measures, the Minkowski problem (i.e., $p=1$) by Zhang \cite{ZhangN24} for nonzero $\sigma$-finite Borel measures, the $L_p$ Minkowski problem  by Yang, Ye and Zhu \cite{YYZ22} when the finite measure is supported on compact subsets of $\Oc$ for  $0\neq p\in \mathbb{R}$, and the $L_p$ Minkowski problem for $0<p<1$ by Ai, Ye and Zhu \cite{AYZ} for finite Borel measures 
as well as the continuity of the solutions. A recent work by Schneider \cite{Sch232}  studied the  weighted Minkowski problem for finite measures.  Please refer to \cite{CW95, P80, U84} for similar works on the Minkowski type problems for unbounded convex hypersurfaces and \cite{HL21} for its $L_p$ version. 
Let us pause here to mention that the $L_p$ Minkowski problem for convex bodies, starting from the groundbreaking work of Lutwak  \cite{L93}, is one of the central objects in convex geometry, which has found fundamental applications in many areas, including analysis, affine geometry, partial differential equations, etc. Here we list a few contributions among others:  the centro-affine Minkowski problem \cite{CW06, JLZ16, Zhu152}, the $L_p$-Minkowski problem for polytopes \cite{HLYZ05, LYZ04, Zhu15, Zhu17}, the connections of the $L_p$ Minkowski problem with PDEs \cite{C06, HLX15, JLW15, LW13, U03}, and the establishment of the affine Sobolev inequalities and the Blaschke-Santal\'o inequalities \cite{CLYZ09, HS09, LYZ02, LZ97, Zhang992}.

The case $p=0$ is of particular interest. In this case, the $L_p$ Minkowski problems (for convex bodies or $C$-coconvex sets) become the log-Minkowski problems, aiming to characterize the cone-volume measures (for convex bodies or $C$-coconvex sets).  See \cite{BHZ16, BH16, BLYZ13,CLZ19, HL14, Sta02, Sta08, Zhu14} for the amazing contributions on the log-Minkowksi problem in the setting of convex bodies. The uniqueness of solutions to the log-Minkowksi problem for convex bodies is a major  problem, still quite open in convex geometry, see \cite{And99, BLYZ12, F74, Gage93, Sta03}. In the setting of the $C$-coconvex sets, Schneider in \cite{Sch18} established the existence of solutions to the  log-Minkowski problem, and raised an open problem regarding the uniqueness of solutions to the log-Minkowski problem. It has been discovered by Yang, Ye, and Zhu in \cite{YYZ22} that a  log-Minkowski inequality for $C$-coconvex sets can be established and hence the uniqueness of solutions to the log-Minkowski problem for $C$-coconvex sets can be obtained. This phenomenon shows that the settings of convex bodies and $C$-coconvex sets are quite different: when it is easy for convex bodies, it could be quite challenging for $C$-coconvex sets; and when it is challenging for convex bodies, it is usually easy for $C$-coconvex sets. 

The cone-volume measures (for both convex bodies and $C$-coconvex sets) have two different expressions. For example, the cone-volume measure of $C$-coconvex sets can be viewed as the $L_p$ surface area measure by letting $p=0$ in \eqref{Lp-measure-support}; it can also be viewed as $\widetilde{C}_q(\A, \cdot)$ for $q=n$. Here, $\widetilde{C}_q(\A, \cdot)$ is the $q$-th dual curvature measure of $C$-coconvex sets introduced by Li, Ye and Zhu in \cite{LYZ22}: 

\begin{equation*}
\widetilde{C}_q(\A, \eta)=
\left\{
\begin{array}{cc}
\frac{1}{n} \int_{\pmb{\alpha}^*_{\A}(\eta)} \rho_C(\A, v)^qdv,& q\neq 0, 
\\
\\
\int_{\pmb{\alpha}^*_{\A}(\eta)}  dv, &  q= 0,
\end{array}
\right.
\end{equation*} for each Borel set $\eta \subseteq \Omega_{C^\circ}$, where $\pmb{\alpha}^*_{\A}$ is the reverse radial Gauss map of $\A$. Under certain conditions, the $q$-th dual curvature measures of $C$-coconvex sets can be derived from a variationl formula of the $q$-th dual volume $\widetilde{V}_q(\cdot)$ in terms of the log-co-sum of $C$-coconvex sets. 

The above mentioned $q$-th dual curvature measure  can also be defined for $C$-compatible sets. Related dual Minkowski problem for $C$-compatible sets was posed in \cite{LYZ22}, which again asks whether a given measure defined on $\Oc$ can be the $q$-th dual curvature measure of some $C$-compatible sets. In \cite{LYZ22}, Li, Ye and Zhu were able to solve the above dual Minkowski problem for $C$-compatible sets when $q>0$ and the given measure is a nonzero finite Borel measure on $\Oc$. 
 
In this paper, we will study the $L_p$ dual Minkowski problem for $C$-compatible sets. Our motivation is the $L_p$ dual Minkowski problem for convex bodies initiated by Lutwak, Yang and Zhang \cite{LYZ18}. Remarkable contributions on the $L_p$ dual Minkowski probelm include \cite{BF19,  CHZ19, CL21, GLW22, HZ18, JWW21,  LLL22}, among others. The problem of interest is stated below. 

\vskip 2mm \noindent \textbf{The $L_p$ dual Minkowski problem for $C$-compatible sets:} {\em Given a nonzero finite Borel measure $\mu$ defined on $\Omega_{C^\circ}$ and real numbers $p, q$, under what conditions does there exist a $C$-compatible set $\A\in \Ln$ such that $\mu=\widetilde{C}_{p, q}(\A, \cdot)$, where} \begin{equation*}
\frac{\,d\widetilde{C}_{p, q}(\A, \cdot)}{\,d\widetilde{C}_{q}(\A, \cdot)} =(-h_C(\A, \cdot ))^{-p}\ ?  
\end{equation*} Hereafter, $\widetilde{C}_{p, q}(\A, \cdot)$ will be called the $(p, q)$-th dual curvature measure of $\A\in \Ln$. 
When $q=n,$  the $L_p$ dual Minkowski problem for $C$-compatible sets reduces to the  $L_p$ Minkowski problem \cite{Sch18, YYZ22}.  

The $L_p$ dual Minkowski problem for $C$-compatible sets reduces to the Monge-Amp\`{e}re type equations in certain circumstance. A typical example of such Monge-Amp\`{e}re type equations is  
\begin{equation*}
\left\{  \begin{array}{l}
(-h(u))^{1-p} \det\big(\bar{\nabla}^2 h(u)+h(u) I\big)=  f(u)  (h^2(u)+|\bar{\nabla} h(u)|^2)^{\frac{n-q}{2}} \ \ \mathrm{for}\ \  u \in \Omega_{C^\circ}, \\ \lim_{u \rightarrow \partial \Omega_{C^\circ}} h(u)=0,
\end{array}\right.
\end{equation*} 
where  $f: \Omega_{C^\circ} \rightarrow [0, \infty)$ is the density function of $\mu$ with respect to the spherical Lebesgue measure $\,du$, and $h:\Oc\rightarrow (-\infty, 0]$ is the unknown convex function. Here,  $I$ is the identity matrix, 
$\det B$ is the determinant of a matrix $B$,
$\bar{\nabla}$ and  $\bar{\nabla}^2$ are the gradient and, respectively,  Hessian operators with respect to an orthonormal frame on $S^{n-1}$, and $\partial \Oc$ is the boundary of $\Oc$. 

Observe that, $\widetilde{C}_{0, q}(\A, \cdot)=\widetilde{C}_{q}(\A, \cdot)$. Hence, the $L_p$ dual Minkowski problem becomes the dual Minkowski problem for $C$-compatible sets \cite{LYZ22} when $p=0$. The case $p=0$ and $q=0$ are closely related to the Alexandrov problem \cite{Ale42}.

Our main results are given  in Section \ref{sec4}, which establishes the existence of solutions to the $L_p$ dual Minkowski problem
for $C$-compatible sets. The first result is for $C$-full sets, whose statement focuses on $p\neq 0$ (and the case for $p=0$ can be seen in \cite{LYZ22}):

\begin{theorem}\label{cd1}
Let $p, q\in \mathbb{R},$ $p\neq 0$, and $\mu$ be a nonzero finite Borel measure defined on $\Omega_{C^\circ}$ whose support concentrates on a compact set $\omega \subset \Omega_{C^\circ}$. The following statements hold. 
 
\begin{itemize}
\item [(i)] If $q\neq 0$ and $p \neq q$, then  there exists a $C$-full set $\A$ such that $\mu=\widetilde{C}_{p, q} (\A, \cdot)$. 
\item [(ii)] If $q=0$, then there exists a $C$-full set $\A$ such that $\mu=J^*_{p} (\A, \cdot)$, where $J^*_p(\A, \cdot)=\widetilde{C}_{p, 0}(\A, \cdot)$. 
\end{itemize}
\end{theorem}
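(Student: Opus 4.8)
The plan is to solve the problem by a variational argument: first for discrete measures supported on finitely many directions of $\omega$ (which produces polyhedral $C$-determined solutions), and then passing to the general compactly supported $\mu$ by approximation. The crucial analytic input is an $L_p$ first–variation formula for the dual volume. If one deforms a $C$-compatible set $\A$ by replacing its support function $\bar h_C(A,\cdot)=-h_C(\A,\cdot)$ with the $L_p$-perturbation $(\bar h_C(A,\cdot)^p+t\,\phi)^{1/p}$ for a continuous $\phi$ on $\omega$, then I expect $\frac{d}{dt}\,\widetilde V_q(\A_t)\big|_{t=0}=\frac{q}{p}\int_{\Oc}\phi\,d\widetilde C_{p,q}(\A,\cdot)$. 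I would establish this by reducing it to the first variation that already produces $\widetilde C_q$, combined with the defining Radon--Nikodym relation $d\widetilde C_{p,q}=(-h_C(\A,\cdot))^{-p}\,d\widetilde C_q$.

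Next I would set up the optimization. Writing $\mu=\sum_{i=1}^N c_i\,\delta_{u_i}$ with $u_i\in\omega$, I parametrize the competing $C$-determined sets by the vector of support values $(\bar h_1,\dots,\bar h_N)\in(0,\infty)^N$, where $\bar h_i=\bar h_C(A,u_i)$ and the associated $C$-coconvex set is cut out by the half-spaces $\{x\cdot u_i\le -\bar h_i\}$. I would optimize $\widetilde V_q(\A)$ subject to the normalization $\sum_i c_i\,\bar h_i^{\,p}=1$; the signs of $p$ and $q$ dictate whether one maximizes or minimizes, and one must select the direction for which the constrained extremum is actually attained. A Lagrange-multiplier computation, fed by the variational formula, then forces $\widetilde C_{p,q}(\A,\cdot)=\lambda\mu$ for some $\lambda>0$. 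Under a dilation $\A\mapsto s\A$ the measure $\widetilde C_{p,q}$ scales as $s^{\,q-p}$, so the hypothesis $p\neq q$ is exactly what permits a choice of $s$ normalizing $\lambda$ to $1$, while $q\neq0$ guarantees that $\widetilde V_q$ is the correct non-degenerate functional (for $q=0$ one must instead use the entropy of part (ii)).

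The main obstacle is showing that the extremum is attained at a genuine $C$-full set rather than escaping to the boundary of the parameter domain. Concretely I must rule out (a) degeneration $\bar h_i\to 0$, (b) blow-up $\bar h_i\to\infty$, and (c) the extremal support function failing to vanish at $\partial\Oc$; it is precisely the last property, together with boundedness of $A=C\setminus\A$, that upgrades the solution from $C$-compatible to $C$-full. Here the compactness of the support $\omega\subset\Oc$ is essential: it keeps the relevant directions bounded away from $\partial\Oc$, and through the cone geometry of $C$ it yields the uniform geometric bounds confining the extremizing sequence to a compact family of $C$-full sets. Establishing the coercivity of the constrained functional that excludes (a) and (b) is the most delicate part, and it must be carried out separately for each of the four sign patterns of $(p,q)$.

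Finally I would remove the discreteness assumption. Approximating $\mu$ weakly by finitely supported measures $\mu_k$, all supported in a fixed compact $\omega$, yields $C$-full solutions $\A_k$ with $\widetilde C_{p,q}(\A_k,\cdot)=\mu_k$. The uniform bounds from the previous step guarantee that, after passing to a subsequence, the $\A_k$ converge to a $C$-full set $\A$ in the appropriate local Hausdorff sense, and the weak continuity of the $(p,q)$-th dual curvature measure under this convergence then gives $\widetilde C_{p,q}(\A,\cdot)=\mu$, proving (i). For part (ii) the identical scheme applies with $\widetilde V_q$ replaced throughout by the logarithmic dual volume $\frac1n\int_{\Omega_C}\log\rho_C(\A,v)\,dv$, whose $L_p$ first variation produces $J^*_p(\A,\cdot)=\widetilde C_{p,0}(\A,\cdot)$; the non-degeneracy and limiting arguments are the same, with this entropy playing the role of $\widetilde V_q$ in the normalization and coercivity estimates.
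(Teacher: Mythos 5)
Your proposal runs on the same analytic engine as the paper's proof --- the $L_p$ first-variation formula along $f_t=(f_0^p+tg)^{1/p}$ (Theorem~\ref{varpq}, derived exactly as you suggest by a log/Taylor reduction to the variation producing $\widetilde{C}_q$), an Euler--Lagrange identification $\mu=\tau\,\widetilde{C}_{p,q}(\A_0,\cdot)$, the degree-$(q-p)$ homogeneity rescaling that is where $p\neq q$ enters, and the dual entropy $\widetilde{\mathbb{E}}$ in place of $\widetilde{V}_q$ when $q=0$ --- but it is packaged quite differently. The paper never discretizes: since $\|f\|_p=\big(\int_\omega f^p\,d\mu\big)^{1/p}$ makes sense for an arbitrary finite Borel measure supported in the compact set $\omega$, it extremizes the degree-zero functional $\Phi(f)=-\log\|f\|_p+\tfrac1q\log\widetilde{V}_q([f])$ directly over $f\in C^+(\omega)$, shows $\Upsilon_f=\Upsilon_Q$ via $-h_C([f],\cdot)\geq f$ (which also sidesteps your facet/touching issue: a supporting halfspace that fails to touch the Wulff shape is ruled out by $\Phi(f)\geq\Phi([f])$ rather than by a separate argument in the Lagrange step), and obtains the extremizer by Blaschke selection on the normalized class $\{Q\in\mathscr{K}(C,\omega):\widetilde{V}_q(Q)=1\}$ (Lemmas~\ref{conPhi}, \ref{inf}, \ref{sol}). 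Crucially, the coercivity you flag as the most delicate point is, under the paper's normalization, a single citation: Lemma~\ref{Cqb&t2} (from \cite{LYZ22}) gives a uniform containment $C\cap H_{t_0}\subset Q$ for all normalized $Q$, valid for every $q$, so no four-case sign analysis of $(p,q)$ is ever needed.

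The comparison also exposes where your route is genuinely heavier, and where you cannot borrow from the paper wholesale. Your approximation layer requires (a) weak continuity of $\widetilde{C}_{p,q}$ under the convergence of Definition~\ref{convergence} for sets in $\mathscr{K}(C,\omega)$, and (b) upper and lower bounds on $b(\A_k)$ uniform in $k$. The paper develops such nondegeneracy bounds (Lemma~\ref{blu}) only for $p\leq 0$ and $p<q$, the lower-bound argument using $[-\pmb{\alpha}_{\A}(v)\cdot v]^{-p}\leq 1$, whereas your part (i) covers all $p\neq 0$, $q\neq 0$, $p\neq q$, including $p>0$ and $q<p$. For a fixed compact $\omega$ the estimates are recoverable --- the inner products $-u\cdot v$ are uniformly pinched in $[a,1]$ for $u\in\omega$, $v\in C\cap S^{n-1}$, so both directions of the Lemma~\ref{blu}-type inequalities can be run with the roles of the upper and lower bounds swapped when $p>q$ --- but these are new estimates you must actually prove, together with the uniform containment $C\cap H_T\subset\A_k$ needed to apply Lemma~\ref{L56} to the limit. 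Net assessment: your discretize-then-approximate scheme (in the spirit of the polytope approach to the $L_p$ Minkowski problem) is viable and buys a concrete polyhedral picture plus a uniqueness-friendly discrete structure (compare Theorem~\ref{uni&dis}), but the discretization is unnecessary for this statement, and everything you defer --- the sign-pattern coercivity and the uniform bounds along the approximating sequence --- is precisely the work that the paper's normalization $\widetilde{V}_q(Q)=1$ together with Lemma~\ref{Cqb&t2} eliminates.
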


For $p, q\in\mathbb{R}$, a $C$-compatible set $\A \in \Ln$ is called a $(C, p, q)$-close set if 
$$\widetilde{C}_{p ,q}(\A, \Oc)=\int_{\Omega_{C^\circ}} (-h_C(\A, u))^{-p}d\widetilde{C}_q(\A, u)<\infty.$$ 
When $p=0$, the $(C, 0, q)$-close set is just the $(C, q)$-close set for $q \in \mathbb{R}$ proposed by Li, Ye and Zhu in \cite{LYZ22}. In particular, $(C, 0, n)$-close sets are the $C$-close sets in \cite{Sch18}. Applying the approximation method used in \cite{Sch18, Sch21, Sch23, Sch232}, Theorem \textcolor{red}{4.1} can be extended to a more general setting, namely, the $(C, p, q)$-close sets, which can be summarized as follows.

\begin{theorem}\label{Cpq-exis}
Let $p,q \in \mathbb{R}$ and $\mu$ be a nonzero finite Borel measure defined on $\Omega_{C^\circ}$. If  $p\leq 0$ and $p<q$, then there exists a $(C, p, q)$-close set $\A$ such that $\mu=\widetilde{C}_{p, q}(\A, \cdot)$.
\end{theorem}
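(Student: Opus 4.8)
The plan is to reduce the general measure $\mu$ to the compactly supported case already settled by Theorem~\ref{cd1}, and then recover $\mu$ by passing to a limit along an exhaustion of $\Oc$. First I would fix an increasing sequence of compact sets $\omega_1 \subseteq \omega_2 \subseteq \cdots$ with $\bigcup_{k} \omega_k = \Oc$, and set $\mu_k = \mu \lfloor \omega_k$, the restriction of $\mu$ to $\omega_k$. For every $k$ large enough that $\mu_k \neq 0$, each $\mu_k$ is a nonzero finite Borel measure whose support lies in the compact set $\omega_k \subset \Oc$. Since $p < q$ forces $p \neq q$, Theorem~\ref{cd1} applies in both relevant regimes: when $q \neq 0$ part~(i) yields a $C$-full set $\A_k$ with $\widetilde{C}_{p,q}(\A_k, \cdot) = \mu_k$, and when $q = 0$ part~(ii) yields a $C$-full set $\A_k$ with $\widetilde{C}_{p,0}(\A_k, \cdot) = J^*_p(\A_k, \cdot) = \mu_k$. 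This produces a sequence of $C$-full solutions $\A_k$ to the truncated problems.

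Second, I would establish the a priori estimates needed for compactness. Because $\mu_k \uparrow \mu$, the total masses are controlled uniformly, namely $\widetilde{C}_{p,q}(\A_k, \Oc) = \mu_k(\Oc) \leq \mu(\Oc) < \infty$. From this uniform mass bound, together with the defining relation $d\widetilde{C}_{p,q}(\A_k, \cdot) = (-h_C(\A_k, \cdot))^{-p}\, d\widetilde{C}_{q}(\A_k, \cdot)$ and the hypotheses $p \leq 0$ and $p < q$, I would extract uniform upper and lower bounds for $-h_C(\A_k, \cdot)$ on every compact subset $\omega \subset \Oc$. The upper bound keeps the coconvex sets $A_k = C \setminus \A_k$ from receding to infinity on compacta, while the lower bound, keeping $-h_C$ away from $0$, prevents $\A_k$ from collapsing onto $\partial C$ and forces the limit to be a genuine $C$-compatible set. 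I expect the sign conditions $p \leq 0$ and $p < q$ to enter precisely here, supplying the monotone or coercive behaviour that underlies these bounds.

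Third, with the local bounds in hand, I would invoke a selection principle for support functions of $C$-compatible sets (a Blaschke-type / locally uniform selection on $\Oc$) to pass to a subsequence with $h_C(\A_k, \cdot) \to h_C(\A, \cdot)$ locally uniformly on $\Oc$ for some $\A \in \Ln$. I would then verify that the $(p,q)$-th dual curvature measure is weakly continuous under this convergence, so that $\widetilde{C}_{p,q}(\A_k, \cdot) \to \widetilde{C}_{p,q}(\A, \cdot)$ weakly on $\Oc$; combined with $\mu_k \to \mu$ this identifies $\widetilde{C}_{p,q}(\A, \cdot) = \mu$. Finally $\widetilde{C}_{p,q}(\A, \Oc) = \mu(\Oc) < \infty$ shows that $\A$ is $(C, p, q)$-close, which completes the argument.

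The main obstacle is the second step. Unlike the compactly supported setting of Theorem~\ref{cd1}, the support of $\mu$ may approach $\partial \Oc$, so the approximating sets $\A_k$ can legitimately become unbounded; one must control both their escape to infinity and their potential degeneration using only the finite total mass and the structural constraints $p \leq 0$, $p < q$. A secondary delicate point is the weak continuity of $\widetilde{C}_{p,q}(\A_k, \cdot)$: since the density $(-h_C(\A_k, \cdot))^{-p}$ carries the nonnegative exponent $-p$ and $-h_C$ may blow up toward $\partial \Oc$, justifying the passage to the limit will require the locally uniform convergence of the support functions together with the uniform mass bound to rule out any loss of mass near the boundary.
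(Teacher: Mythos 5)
Your route is the same as the paper's: exhaust $\Oc$ by an increasing family of sets with compact closures, solve the restricted problems via Theorem~\ref{cd1} (using $p<q\Rightarrow p\neq q$, with part~(ii) covering $q=0$), derive uniform a priori bounds, select a convergent subsequence of the $C$-full solutions, identify the limit measure, and read off $(C,p,q)$-closeness from $\widetilde{C}_{p,q}(\A,\Oc)=\mu(\Oc)<\infty$. The skeleton is correct, but the step you yourself single out as the main obstacle is mis-sourced as written, and the error in attribution would sink a literal implementation.

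The non-collapse estimate cannot be extracted ``from this uniform mass bound''. The upper bound can: as in Lemma~\ref{blu}, integrating the density over a fixed cap $\overline{U}_{t/2}\subset\Omega_C$ gives $\mu(\Oc)\geq c\,b(\A_k)^{q-p}$, hence $b(\A_k)\leq \beta_1$ since $q>p$. But an upper bound on total mass is perfectly consistent with $b(\A_k)\to 0$, because by the homogeneity \eqref{homCpq} small sets carry small measure: $\widetilde{C}_{p,q}(\lambda\A,\cdot)=\lambda^{q-p}\widetilde{C}_{p,q}(\A,\cdot)$ with $q-p>0$. What is needed --- and what the paper builds into the construction --- is a mass \emph{lower} bound on a fixed compact set: choose $\tau>0$ with $\mu(\overline{\omega}(\tau))=s_0>0$ and arrange $\overline{\omega}(\tau)\subset\omega_1$, so that $\widetilde{C}_{p,q}(\A_k,\overline{\omega}(\tau))=\mu_k(\overline{\omega}(\tau))=s_0$ for \emph{every} $k$; then Schneider's localization lemma (\cite[Lemma 8]{Sch18}) confines $\nu_{\A_k}^{-1}(\overline{\omega}(\tau))$ to a bounded cap $C_{t'}$, and since $p\leq 0$ gives $[-\pmb{\alpha}_{\A_k}(v)\cdot v]^{-p}\leq 1$, one obtains $s_0\leq c_2\,b(\A_k)^{q-p}$, i.e.\ $b(\A_k)\geq (s_0/c_2)^{1/(q-p)}$. (Your choice $\mu_k=\mu\lfloor\omega_k$ does supply this lower bound automatically, but it must be invoked; the sign conditions enter exactly here, not through the total mass.) A second under-specified point is your appeal to weak continuity of $\widetilde{C}_{p,q}$ along $\A_k\to\A$: this is not available off the shelf, since each $\A_k$ is $C$-determined by a different compact set $\overline{\omega_k}$ and the limit $\A$ by none. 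The paper localizes through auxiliary Wulff shapes $\widehat{\A}_j=[C,\overline{\omega_{j_0}},h(\A_j\cap C_{t_k},\cdot)]$, applies the known weak convergence of $\widetilde{C}_q$ within a fixed class $\mathscr{K}(C,\overline{\omega_{j_0}})$ (\cite[Lemma 5.3]{LYZ22}), and then, in place of full weak convergence on the open domain $\Oc$, sandwiches via the portmanteau theorem (\cite[Theorem 4.5.1]{Ash72}): $\widetilde{C}_{p,q}(\A,\omega)\leq\mu(\omega)$ on open sets and $\widetilde{C}_{p,q}(\A,\beta)\geq\mu(\beta)$ on closed sets, which forces equality. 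With these two repairs your proposal becomes precisely the paper's proof.
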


This paper is organized as follows. In Section \ref{section:2}, we will provide some basic background. Section \ref{sec3} dedicates to the introduction of $(p ,q)$-th dual curvature measure and its related variational formula. The proofs of Theorems \ref{cd1} and  \ref{Cpq-exis} will be given in Section \ref{sec4}, while Section \ref{sec5} focuses on some results regarding the uniqueness of the solutions to the $L_p$ dual Minkowski problem for $C$-compatible sets.  

\section{Background and Preliminaries}\label{prepar} \label{section:2}

In this section, we will present some essential notations and background which are necessary for the presentation of this paper. For a comprehensive reference on these topics, we recommend books \cite{Sch14, Sch22} by Schneider, which serve as excellent resources.

Denote by $\overline{E}$,
$\text{int} E$, $\partial E$ and $E^c$, respectively, the closure,  the interior, the
boundary, and the complement of $E\subset \mathbb{R}^n$.   For $a\in \mathbb{R}$, the hyperplane with normal vector $u \in S^{n-1}$ is defined as
\begin{equation*}
H(u, a)=\{ x \in \mathbb{R}^n : x \cdot u=a\}.
\end{equation*} The upper and lower halfspaces can be given by
\begin{equation*}
H^{+}(u, a)=\{ x \in \mathbb{R}^n : x \cdot u \geq a\} \,\,\,\text{and}\,\,\,H^{-}(u, a)=\{ x \in \mathbb{R}^n : x \cdot u \leq a\}.
\end{equation*}

Recall that  $C\subset \mathbb{R}^n$ is a  pointed closed convex cone, if
$C=\{ \lambda x : \lambda \geq 0\,\, \text{and}\,\, x \in C\}$  is a closed and convex set such that $C$ has nonempty interior and 
$C \cap (-C)=\{o\}$
with $o$ being the origin of $\mathbb{R}^n$. Let $\Omega_C=\sphere \cap \text{int}C$ and 
$\Oc=\sphere \cap \text{int}C^{\circ}$, where $\sphere$ is the unit sphere in $\Rn$ and $C^{\circ}$ is polar cone of $C$ defined in \eqref{def-polar-cone}. The objects of interest are the $C$-compatible sets given in \eqref{def-1-1-2}, i.e.,  \begin{eqnarray*}
E=C\cap  \bigcap_{u\in \Oc}  \Big\{H_u^{-}:   E\subseteq H_u^-\Big\},
\end{eqnarray*} where $H_u^-$ denotes a halfspace with normal vector $u\in \sphere.$

Special $C$-compatible sets include $C$-close sets (i.e., the volume of $C\setminus E$ is finite), $C$-full sets (the set $C\setminus E$ is bounded), and the $C$-determined sets. Hereafter, for a compact set $\omega\subset \Oc$, a $C$-compatible set $\A$ is said to be $C$-determined by $\omega$, if 
\begin{equation}\label{Cdeter}
\A=C \cap \bigcap_{u \in \omega} H^-(u, h_C(\A, u)),
\end{equation} where $h_C(\A, \cdot)$ denotes the support function of $\A$ given in \eqref{support-1-A}.   Denote by $\mathscr{K}(C, \omega)$ the family of $C$-compatible sets that are $C$-determined by $\omega$. We say $\A$ a $C$-determined set if $\A\in \mathscr{K}(C, \omega)$ for some compact set $\omega\subset \Oc$. Clearly any $C$-determined set must be $C$-full. Note that for $E\in \Ln$, the hyperplane $H(u, h_C(E, u))$ is called the supporting hyperplane of $E$ with unit outer normal vector $u\in \Oc.$ Combining with the $p$-co-sum and the volume, Yang, Ye and Zhu \cite{YYZ22}  derived a variational formula using the Wulff shape, and subsequently introduced the notion of the $L_p$ surface area measure of a $C$-determined set $\A \in \mathscr{K}(C, \omega)$ when $0\neq p\in \mathbb{R}$ (see its definition in \eqref{Lp-measure-support}). We will derive a variational formula for the $q$-th dual volume in terms of the  $p$-co-sum of $C$-determined sets in Theorem \ref{varpq}.  

The convex hull and polarities are fundamental in convex geometry. In the case of $C$-compatible sets, such notions have been given by Li, Ye and Zhu in \cite{LYZ22}. For a nonempty set $o\notin E\subset C$, its closed convex hull with respect to $C$ is defined by 
\begin{align*}
\text{conv}(E, C)=\bigcap \left\{  \widetilde{E} :  \widetilde{E}\,\, \text{is a $C$-close set such that}\,\, E \subset  \widetilde{E} \right\} =C\cap  \bigcap_{u\in \Oc}  \Big\{H_u^{-}:   E\subseteq H_u^-\Big\}. 
\end{align*}   
In view of \eqref{def-1-1-2}, one sees that, $o\notin E\subset C$ is $C$-compatible if $E=\text{conv}(E, C).$  The copolar set of $o\notin E\subset C$ is defined  in \cite{LYZ22} (see also \cite{SSKzoo, R17, XLG23} for similar definitions) by 
\begin{equation*} \label{copolar-11}
E_C^{\diamond}=\{y \in C^\circ: x \cdot y \leq -1\,\,\,\text{for all}\,\,\,x \in E\} \subset C^{\circ}.\end{equation*} In particular, the bipolar theorem holds, namely, $(E_C^{\diamond})_{C^{\circ}}^{\diamond}=E$ if $E$ is a $C$-compatible set. Moreover, for any $C$-compatible set $E,$ one has   $$h_C(E, u)\cdot \rho_{C^\circ}(E_C^{\diamond}, u)=-1\,\,\, \text{for}\,\,\,u\in \Oc, $$  where $h_C(E, \cdot)$ and $\rho_C(E, \cdot)$ are the support and radial functions defined in \eqref{support-1-A} and \eqref{radial-1-A}. 

Recall that $\Ln$ denotes  the collection of $C$-compatible sets. For each $\A\in \Ln$, one can define $\nu_{\A}: \partial \A \cap \text{int} C \rightarrow S^{n-1}$, the Gauss map of $\A$, by: for each $F \subseteq \partial \A\cap \text{int} C$, 
\begin{equation*}
\nu_{\A}(F)=\{u\in S^{n-1} : F\cap H \left( u, h_C(\A, u)\right)\neq \emptyset \},
\end{equation*}
where $ H \left( u, h_C(\A, u)\right)$ is the supporting hyperplane of $\A$ at the direction $u$. By $\nu^{-1}_{\A}$, we mean the reverse Gauss map of $\A$. Let ${S}_{n-1}(\A, \cdot)$ denote the surface area measure of $\A\in \Ln$, and then, for any Borel set $\eta \subseteq \Omega_{C^\circ}$, one has 
\begin{equation*}
S_{n-1}(\A, \eta)=\mathcal{H}^{n-1} (\nu^{-1}_{\A}(\eta)),
\end{equation*}
where $\mathcal{H}^{k}$ is the $k$-dimensional Hausdorff measure. When $\A\in \Ln$ is a $C$-close set, Schneider in  \cite{Sch18} formulated the volume of $A=C \,\backslash \A$  as follows:
\begin{equation*}
V_n(A)=\frac{1}{n}\int_{\Omega_{C^\circ}} \overline{h}_C(A, u)d\overline{S}_{n-1}(A, u)=-\frac{1}{n}\int_{\Omega_{C^\circ}} h_C(\A, u)dS_{n-1}(\A, u),
\end{equation*}
where $\overline{S}_{n-1}(A, \cdot)=S_{n-1}(\A, \cdot).$

Recall that the $p$-co-sum of $C$-coconvex sets is defined in \eqref{co-p-sum-def-1} and \eqref{co-p-sum-def-2}. Note that formula \eqref{co-p-sum-def-2} can be also used to define the $p$-co-sum of $C$-compatible sets. A slight modification of \eqref{co-p-sum-def-2} can be used to define the log-co-sum of $C$-compatible sets (i.e., the case $p=0$). Indeed, for $\tau \in [0, 1]$ and $\A_1, \A_2\in \Ln$, let $(1-\tau)A_1 \oplus_0 \tau A_2$ denote the log-co-sum of $A_1$ and $A_2$ with respect to $\tau$, which is defined by its support function given by, for $u \in \Omega_{C^\circ}$, 
\begin{equation*}
\overline{h}_C((1-\tau) A_1 \oplus_0 \tau A_2, u)=(\overline{h}_C(A_1, u))^{1-\tau}(\overline{h}_C(A_2, u))^\tau= (-h_C(\A_1, u))^{1-\tau}(-h_C(\A_2, u))^\tau. 
\end{equation*}
That is, the log-co-sum of $C$-coconvex sets $A_1$ and $A_2$ is given by:
\begin{equation*}
C\,\backslash ((1-\tau)A_1 \oplus_0 \tau A_2)=C \cap \bigcap_{u\in \Oc} \big\{ x \in \mathbb{R}^n: x\cdot u\leq -\overline{h}_C((1-\tau)A_1 \oplus_0 \tau A_2, u)\big\}.
\end{equation*}
  
We shall use the following fact: there exists a fixed vector $\xi \in \Omega_{C}$, such that $x \cdot \xi>0$ for all $x \in C\, \backslash\, \{o\}$.
Let $C_t=C \cap H^{-}_{t}$, where $H^{-}_{t}=H^{-}(\xi, t)$ for $t\in \mathbb{R}$.
It is easy to see that $C_t$ is bounded for $t>0$. We also use the following convergence for $C$-compatible sets, see \cite{LYZ22, Sch18}. For convenience, let  $\mathbb{N}_0=\mathbb{N} \cup \{0\}$.  
\begin{definition}\label{convergence}
For a sequence $\{\A_i\}_{i \in \mathbb{N}_0} \subset \Ln$, if there exists $t_0>0$ such that $\A_i \cap C_{t_0} \neq \emptyset$ for all $i \in \mathbb{N}$, and for all $t>t_0$, 
\begin{equation*}
\A_i \cap C_{t} \rightarrow \A_0\cap C_{t} \,\,\, \text{as}\,\,\, i\rightarrow \infty
\end{equation*}
in the Hausdorff metric, then we say $\A_i $ converges to $\A_0$ as $i\rightarrow \infty$, written by $\A_i \rightarrow  \A_0$ as $i\rightarrow \infty$.
\end{definition}

The  effective boundary of a $C$-compatible set $\A$ may be defined by $\partial_e \A=\partial \A \cap \partial A$. The effective radial direction of $\A$ is given by 
$$\Omega_C^e=\Big\{\frac{x}{|x|}: x\in \partial_e \A\Big\}.$$ That is, $\Omega_C^e\subseteq C\cap S^{n-1}$ satisfies that 
$\rho_C(\A,v)\in (0, \infty)$ if $v\in \Omega_C^e$.
Similarly, we will have the effective range of the unit normal vectors of $\partial_e \A$, which is of the form $\Omega_{C^\circ}^e=\Omega_{C^\circ} \cup \mathcal{N}_{\A}$, where  either $\mathcal{N}_{\A} =\emptyset$ or $\mathcal{N}_{\A} \subset (C^{\circ}\cap S^{n-1}) \backslash\, \Omega_{C^\circ}$. Clearly, $\mathcal{N}_{\A}$ has its  spherical Lebesgue measure equal to $0$.
Notice that $\Omega_C\subseteq \Omega_C^e$ and the spherical Lebesgue measure of  $\Omega_C^e\setminus \Omega_C$ is 0.  
Thus, the $q$-th dual volume of $\A \in \Ln$ for $0 \neq q \in \mathbb{R}$ defined in \cite{LYZ22} can be expressed as
\begin{equation}\label{qthdual_Volume}
\widetilde{V}_q(\A)=\frac{1}{n} \int_{\Omega_{C}^e} \rho_C(\A, v)^q dv=\frac{1}{n} \int_{\Omega_{C}} \rho_C(\A, v)^q dv
\end{equation}
given that the above integral exists and is finite. 
Note that when $q=n$, $\widetilde{V}_n(\A)=V_n(C\,\backslash \A)$ for a $C$-close set $\A$.
Similarly, the dual entropy of $C$-compatible set $\A$ defined in \cite{LYZ22} is given by
\begin{equation}\label{dual_entropy}
\widetilde{\mathbb{E}}(\A)= \int_{\Omega_C} \log\big(\rho_C(\A, v)\big) \,dv
\end{equation}
on the condition that \eqref{dual_entropy} exists and is finite.
 
\section{The \texorpdfstring{$(p ,q)$}{}-th dual curvature measure and variational formula} \label{sec3}

This section is devoted to precisely defining the $(p, q)$-th dual curvature measure of $C$-compatible sets and establishing the associated variational formula for $C$-determined sets. Prior to the study on these topics, certain preliminary work is necessary.  To ensure coherence and build upon existing research, we adopt the methodology employed by Li, Ye and Zhu in their recent work \cite{LYZ22}, and more details can be found therein.

Let $\A \in \Ln$.
Denote by $\pmb{\alpha}_{\A}$ the radial Gauss map of $\A$, which is formulated by:  for $\vartheta \subseteq \Omega^e_C$,
\begin{equation}\label{radial&Gauss}
\pmb{\alpha}_{\A}(\vartheta)=\{ u \in \Omega^e_{C^\circ} : r_{\A}(v) \in H(u, h_C(\A, u))\,\,\text{for some}\,\, v \in\vartheta \},
\end{equation}
where $ r_{\A}: \Omega^e_C \rightarrow  \partial_e \A$ is the radial map of $\A$ given by $r_{\A}(v)=\rho_C(\A, v)v \in \partial_e \A$ for $v \in \Omega^e_C$.
The reverse radial Gauss map of $\A$, denoted by $\pmb{\alpha}^*_{\A}$, is given by:  for $\eta \subseteq \Omega^e_{C^\circ}$,
\begin{equation}\label{reverse&radial&Gauss}
\pmb{\alpha}^*_{\A}(\eta)=\{ v \in \Omega^e_{C} : r_{\A}(v) \in H(u, h_C(\A, u))\,\,\text{for some}\,\, u \in\eta\}.
\end{equation}
From \eqref{radial&Gauss} and \eqref{reverse&radial&Gauss}, one sees that,  for $v \in \Omega^e_C$ and $\eta\subseteq \Omega^e_{C^\circ}$,
\begin{equation*}
v \in \pmb{\alpha}^*_{\A}(\eta) \Longleftrightarrow \pmb{\alpha}_{\A}(\{v\}) \cap \eta= \pmb{\alpha}_{\A}(v) \cap \eta \neq \emptyset.
\end{equation*}
Particularly, if $\eta$ contains only one element $u \in \Omega^e_{C^\circ}$, one has
\begin{equation}\label{singleton}
v \in \pmb{\alpha}^*_{\A}(u) \Longleftrightarrow u \in \pmb{\alpha}_{\A}(v).
\end{equation}

Recall the definition of  $q$-th dual curvature measure of $\A$ for $q\in \R$ given in \cite[Definition 4.3 and Definition 8.1]{LYZ22}. 
\begin{definition}\label{q-dual-cur}
Let $0\neq q \in \mathbb{R}$ and $\A \in \Ln$. If $\widetilde{V}_q(\A)<\infty$, define the $q$-th dual curvature measure of $\A$, denoted by $\widetilde{C}_{q}(\A, \cdot)$,  as: for each Borel set $\eta \subseteq \Omega_{C^\circ}$,
\begin{equation}\label{qdual}
\widetilde{C}_q(\A, \eta)=\frac{1}{n} \int_{{\alpha}^*_{\A}(\eta)} \rho_C(\A, v)^qdv.
\end{equation}
When $q=0$, define
\begin{equation}\label{0dual}
\widetilde{C}_0(\A, \eta)= \int_{\pmb{\alpha}^*_{\A}(\eta)}  dv.
\end{equation}
\end{definition}

Based on Definition \ref{q-dual-cur},  we can define the $(p, q)$-th dual curvature measures of $C$-compatible sets.  Chen and Tu, independently, also come up with the same definition of  the $(p, q)$-th dual curvature measures of $C$-compatible sets in their recent work \cite{CT24}.   

\begin{definition}\label{pqdual}
Let $p, q \in \mathbb{R}$ and $\A \in \Ln$. Define the $(p, q)$-th dual curvature measure of $\A$, denoted by $\widetilde{C}_{p ,q}(\A, \cdot)$, by: for each Borel set $\eta \subseteq \Omega_{C^\circ}$,
\begin{equation}\label{pq-dual}
\widetilde{C}_{p, q}(\A, \eta)=\int_{\eta} (-h_C(\A, u))^{-p} d\widetilde{C}_q (\A, u).
\end{equation}
\end{definition} 

Let $\A\in \Ln$. For each $u\in \Oc$, one has $h_C(\A, u)\in (-\infty, 0)$. Hence, the $(p, q)$-th dual curvature measure $\widetilde{C}_{p, q}(\A, \cdot)$ is well-defined but may be infinite. Moreover, the following holds on $\Oc$: 
$$ \frac{d \widetilde{C}_{p, q}(\A, \cdot)}{d\widetilde{C}_q (\A, \cdot)}=(-h_C(\A, \cdot))^{-p}.$$ In particular,  $\widetilde{C}_{p, q}(\A, \cdot)=\widetilde{C}_{ q}(\A, \cdot)$ when $p=0$. 
Notice that $\widetilde{C}_0(\A, \cdot)$ is usually written by $J^*(\A, \cdot)$ for $q=0$ (see \cite{LYZ22}). Thus,
for $p\in \mathbb{R}$,
$\widetilde{C}_{p, 0}(\A, \cdot)$ is usually written as
$d\widetilde{C}_{p, 0}(\A, \cdot)=dJ_p^*(\A, \cdot)=(-h_C(\A, \cdot))^{-p} dJ^*(\A, \cdot)$. 
For $\lambda>0$,  it follows from \eqref{qdual}, \eqref{0dual} and \eqref{pq-dual} that 
\begin{align} \label{homCpq}
\widetilde{C}_{p, q}(\lambda \A, \cdot)&=\lambda^{q-p}\widetilde{C}_{p, q}( \A, \cdot)\ \ \  \text{for} \ \ q \neq 0, \\  
J^*_p(\lambda \A, \cdot)&=\lambda^{-p}J^*_p( \A, \cdot)\ \ \ \  \ \  \text{for} \ \ q = 0. \notag
\end{align}
For $A=C\,\backslash \A$, we simply let $\widetilde{C}_{p, q}(A, \cdot)=\widetilde{C}_{p, q}(\A, \cdot)$.

We are interested in a special family of $C$-compatible sets, called $(C, p, q)$-close sets.  
 
\begin{definition}\label{CpqC} A $C$-compatible set $\A \in \Ln$ is said to be  $(C, p, q)$-close for $p, q \in \mathbb{R}$, if 
\begin{equation}\label{Cpqclose}
\widetilde{C}_{p, q}(\A, \Oc)=\int_{\Omega_{C^\circ}} (-h_C(\A, u))^{-p}d\widetilde{C}_q(\A, u)<\infty.
\end{equation} If $\A \in \Ln$ is $(C, p, q)$-close, then $A=C\,\backslash \A$ is called a $(C, p, q)$-coconvex set. 
\end{definition} Note that, a $(C, 0, q)$-close set is just a $(C, q)$-close set proposed by Li, Ye and Zhu in \cite{LYZ22}, which contains the $C$-close sets as special cases (corresponding to $q=n$).   
It can be checked from \eqref{qdual}, \eqref{0dual},  the boundedness of its support function that every $(C, q)$-close set must be a $(C, p, q)$-close set for all $p< 0$,  while every $(C, p, q)$-close set for $p>0$   must be a $(C, q)$-close set. A $C$-full set must be a $(C, p, q)$-close set for all $p\leq 0$. The $C$-determined sets are special, because each $\A \in \mathscr{K}(C, \omega)$ for some compact subset $\omega \subset \Omega_{C^\circ}$ must be $(C, p, q)$-close  for any $p, q\in \R$. To this end, notice that 
$\pmb{\alpha}^*_{\A}$ maps each Borel set $\eta\subset \Oc$ 
disjoint with $\omega$ to the boundary of $C$, and hence $\pmb{\alpha}^*_{\A}(\eta)$ has its spherical measure equal to $0$. That says, the measure $\widetilde{C}_q (\A, \cdot)$ is concentrated on $\omega$ and $\A\in  \mathscr{K}(C, \omega)$ must be $(C, p, q)$-close  for any $p, q\in \R$, due to  \eqref{Cpqclose}. 

By \cite[(4.11), (4.12) and Section 8]{LYZ22}, for $ q \in \mathbb{R}$ and each bounded Borel function $f:\Omega_{C^\circ} \rightarrow \mathbb{R}$, the following hold: 
\begin{itemize}
\item if $q\neq 0$,
\begin{align} 
\notag
\int_{\Omega_{C^\circ}}f(u)d\widetilde{C}_q(\A, u)&=\frac{1}{n}\int_{\Omega_{C}} f(\alpha_{\A}(v))\rho_C(\A, v)^qdv,\\ \notag 
\int_{\Omega_{C^\circ}}f(u)d\widetilde{C}_q(\A, u)&=-\frac{1}{n}\int_{\partial \A \cap (\text{int}C)} f(\nu_\A (x)) \cdot (x \cdot \nu_\A (x)) \cdot |x|^{q-n} d\mathscr{H}^{n-1} (x);
\end{align}

\item if $q=0$,
\begin{align}
\label{Borel&q0}
\int_{\Omega_{C^\circ}}f(u)dJ^*(\A, u)&=\int_{\Omega_{C}} f(\alpha_{\A}(v))dv, \\  
\int_{\Omega_{C^\circ}}f(u)dJ^*(\A, u)&=-\int_{\partial \A \cap (\text{int}C)} f(\nu_\A (x)) \cdot (x \cdot \nu_\A (x)) \cdot |x|^{-n} d\mathscr{H}^{n-1} (x) \notag.
\end{align}
\end{itemize} 

Consequently, the following lemma can be obtained for $(C, p, q)$-close sets.
\begin{lemma}
Let $p, q \in \mathbb{R}$ and $\A \in \Ln$ be a $(C, p, q)$-close set. Then for each Borel set $\eta \subseteq \Omega_{C^\circ}$ and each bounded Borel function $g: \Omega_{C^\circ} \rightarrow \mathbb{R}$, the following hold. 
\begin{itemize}
\item[(i)] If $q \neq 0$, then 
\begin{align}\label{borel&q}
\int_{\Omega_{C^\circ}} g(u)d\widetilde{C}_{p ,q}(\A, u)&=\frac{1}{n} \int_{\Omega_{C}} g(\alpha_{\A}(v)) (-h_C(\A, \alpha_{\A}(v)))^{-p}\rho_C(\A, v)^qdv, \\  \label{borel&qL}
\int_{\Omega_{C^\circ}} g(u)d\widetilde{C}_{p ,q}(\A, u)&=\frac{1}{n}\int_{\partial \A \cap (\text{int}C)} g(\nu_\A (x)) \cdot [-(x \cdot \nu_\A (x))]^{1-p} \cdot |x|^{q-n} d\mathscr{H}^{n-1} (x). 
\end{align}

\item[(ii)] If $q= 0$, then
\begin{align}\label{borel&0}
\int_{\Omega_{C^\circ}} g(u)dJ^*_{p}(\A, u)&=\int_{\Omega_{C}} g(\alpha_{\A}(v)) (-h_C(\A, \alpha_{\A}(v)))^{-p}dv,\\ \label{borel&0L}
\int_{\Omega_{C^\circ}} g(u)dJ^*_{p}(\A, u)&=\int_{\partial \A \cap (\text{int}C)} g(\nu_\A (x)) \cdot [-(x \cdot \nu_\A (x))]^{1-p} \cdot |x|^{-n} d\mathscr{H}^{n-1} (x).
\end{align}
\end{itemize}
\end{lemma}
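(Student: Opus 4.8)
The plan is to reduce both displayed identities to the corresponding formulas for $\widetilde{C}_q(\A,\cdot)$ quoted from \cite{LYZ22} immediately before the lemma, by absorbing the weight $(-h_C(\A,\cdot))^{-p}$ into the integrand. By Definition \ref{pqdual} and the Radon--Nikodym relation $d\widetilde{C}_{p,q}(\A,\cdot)/d\widetilde{C}_q(\A,\cdot)=(-h_C(\A,\cdot))^{-p}$, any bounded Borel $g$ satisfies
$$\int_{\Omega_{C^\circ}} g(u)\,d\widetilde{C}_{p,q}(\A,u)=\int_{\Omega_{C^\circ}} g(u)(-h_C(\A,u))^{-p}\,d\widetilde{C}_q(\A,u).$$
The natural move is therefore to set $f(u)=g(u)(-h_C(\A,u))^{-p}$ and feed $f$ into the two representation formulas for $\widetilde{C}_q$. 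The radial formula, after substituting $f(\alpha_{\A}(v))=g(\alpha_{\A}(v))(-h_C(\A,\alpha_{\A}(v)))^{-p}$, yields \eqref{borel&q} directly; the case $q=0$ proceeds identically from \eqref{Borel&q0} to give \eqref{borel&0}.

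The surface formulas require one geometric identity: for $x\in\partial_e\A$ with outer normal $\nu_{\A}(x)=u$, the supporting-hyperplane relation gives $x\cdot\nu_{\A}(x)=h_C(\A,\nu_{\A}(x))$, so that $(-h_C(\A,\nu_{\A}(x)))^{-p}=(-(x\cdot\nu_{\A}(x)))^{-p}$. Writing $f(\nu_{\A}(x))=g(\nu_{\A}(x))(-(x\cdot\nu_{\A}(x)))^{-p}$ and using $x\cdot\nu_{\A}(x)=-(-(x\cdot\nu_{\A}(x)))$, the factor $-\tfrac1n f(\nu_{\A}(x))\,(x\cdot\nu_{\A}(x))$ collapses to $\tfrac1n g(\nu_{\A}(x))\,(-(x\cdot\nu_{\A}(x)))^{1-p}$, which is precisely the integrand of \eqref{borel&qL}; the $q=0$ surface formula gives \eqref{borel&0L} the same way. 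This part is pure algebraic bookkeeping once the identity for $x\cdot\nu_{\A}(x)$ is in place.

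The one genuine obstacle is that the quoted formulas for $\widetilde{C}_q$ are stated for \emph{bounded} Borel $f$, whereas $f=g(-h_C(\A,\cdot))^{-p}$ need not be bounded: $(-h_C(\A,\cdot))^{-p}$ blows up where $-h_C\to 0$ (when $p>0$) or where $-h_C\to\infty$ (when $p<0$). This is exactly where the $(C,p,q)$-close hypothesis is used. I would handle it by truncation: replace the weight by $w_N=\min\{(-h_C(\A,\cdot))^{-p},N\}$, so that $g\,w_N$ is bounded and all three formulas apply verbatim, and then let $N\to\infty$. For $g\ge 0$ monotone convergence passes the limit through each integral, and the left-hand limit is finite precisely because $\widetilde{C}_{p,q}(\A,\Oc)<\infty$ by \eqref{Cpqclose}; the general bounded $g$ then follows by splitting into positive and negative parts and invoking dominated convergence with the $\widetilde{C}_q$-integrable dominating function $\|g\|_\infty(-h_C(\A,\cdot))^{-p}$, integrability being again guaranteed by the $(C,p,q)$-close assumption. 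Taking $g=\mathbf 1_\eta$ recovers the value $\widetilde{C}_{p,q}(\A,\eta)$, so the Borel-set version is subsumed. This truncation-and-limit step is routine measure theory but is the only place where more than formal substitution is needed.
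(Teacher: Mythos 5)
Your proposal is correct and is essentially the paper's own argument: the paper likewise reduces \eqref{borel&q} and \eqref{borel&0} to the quoted representation formulas for $\widetilde{C}_q(\A,\cdot)$ via ``a standard argument based on simple functions'' (your truncation with $w_N$ plus monotone/dominated convergence, justified by the $(C,p,q)$-close condition \eqref{Cpqclose}, is exactly an explicit implementation of that), and it obtains \eqref{borel&qL} and \eqref{borel&0L} from the change of variables $v=x/|x|=r_{\A}^{-1}(x)$ with Jacobian $-\big(x\cdot\nu_{\A}(x)\big)\cdot|x|^{-n}$, which is the same computation as your use of the quoted surface formulas combined with the supporting-hyperplane identity $x\cdot\nu_{\A}(x)=h_C(\A,\nu_{\A}(x))$. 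One cosmetic remark: since $-h_C(\A,u)\leq b(\A)$ for $u\in\Oc$ (see \eqref{hb}), the weight $(-h_C(\A,\cdot))^{-p}$ cannot blow up via $-h_C\to\infty$, so your truncation is genuinely needed only for $p>0$; this does not affect the validity of your argument.
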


\begin{proof} 
Formulas \eqref{borel&q} and \eqref{borel&0} can be obtained through a standard argument based on simple functions. Formulas \eqref{borel&qL} and \eqref{borel&0L} can be obtained by following variable change: $u=\frac{x}{|x|}=r_{\A}^{-1}(x)$, whose Jacobian is $-\big(x\cdot \nu_{\A}(x)\big) \cdot |x|^{-n}$ \cite[page 22]{LYZ22}.   
\end{proof} 

The next proposition, which follows similar lines to \cite[Propsition 4.3]{LYZ18}, exhibits the integral formulation of the $(p, q)$-th dual curvature measure of a $(C, p, q)$-close set $\A$ for $p, q \in \mathbb{R}$.
\begin{prop}\label{integral}
Let $p, q \in \mathbb{R}$  and $\A \in \Ln$ be a $(C, p, q)$-close set. The following hold for each Borel set $\eta \subseteq \Omega_{C^\circ}$: 

\begin{itemize}
\item [(i)] if $q\neq 0$, one has
\begin{equation}
\widetilde{C}_{p, q}(\A, \eta)=\frac{1}{n} \int_{{\pmb{\alpha}}^*_{\A}(\eta)} (-h_C(\A, {\pmb{\alpha}}_{\A}(v)) )^{-p}\rho_C(\A, v)^qdv;\label{need to prove-1}
\end{equation}

\item [(ii)] if $q= 0$, one has
\begin{equation*}
J^*_p(\A, \eta)= \int_{{\pmb{\alpha}}^*_{\A}(\eta)} (-h_C(\A, {\pmb{\alpha}}_{\A}(v)) )^{-p}dv.
\end{equation*}
\end{itemize}
\end{prop}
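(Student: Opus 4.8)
The plan is to prove the integral formula by restricting the general identity \eqref{borel&q} (for $q\neq 0$) and \eqref{borel&0} (for $q=0$) to the appropriate indicator function and carefully handling the set over which one integrates. Fix a Borel set $\eta\subseteq\Omega_{C^\circ}$ and consider the bounded Borel function $g=\mathbbm{1}_\eta$, the indicator of $\eta$. By the definition \eqref{pq-dual} of the $(p,q)$-th dual curvature measure, the left-hand side $\int_{\Omega_{C^\circ}} g\,d\widetilde{C}_{p,q}(\A,\cdot)$ equals exactly $\widetilde{C}_{p,q}(\A,\eta)$, which is the quantity we want to express. So the entire content of the proposition is to show that applying \eqref{borel&q} with $g=\mathbbm{1}_\eta$ reproduces the claimed integral \eqref{need to prove-1} over $\pmb{\alpha}^*_{\A}(\eta)$.

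First I would substitute $g=\mathbbm{1}_\eta$ into the right-hand side of \eqref{borel&q}, which for $q\neq 0$ gives
\begin{equation*}
\widetilde{C}_{p,q}(\A,\eta)=\frac{1}{n}\int_{\Omega_C}\mathbbm{1}_\eta(\pmb{\alpha}_{\A}(v))\,(-h_C(\A,\pmb{\alpha}_{\A}(v)))^{-p}\rho_C(\A,v)^q\,dv.
\end{equation*}
The key step is then to recognize that $\mathbbm{1}_\eta(\pmb{\alpha}_{\A}(v))=1$ precisely when $\pmb{\alpha}_{\A}(v)\in\eta$, and by the duality \eqref{singleton} together with the definition \eqref{reverse&radial&Gauss} of the reverse radial Gauss map, this is equivalent (up to a set of spherical measure zero, where the radial Gauss map may be multivalued) to $v\in\pmb{\alpha}^*_{\A}(\eta)$. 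Therefore the factor $\mathbbm{1}_\eta(\pmb{\alpha}_{\A}(v))$ restricts the domain of integration from $\Omega_C$ to $\pmb{\alpha}^*_{\A}(\eta)$, yielding \eqref{need to prove-1}. The case $q=0$ is entirely parallel, using \eqref{borel&0} in place of \eqref{borel&q} and dropping the $\rho_C(\A,v)^q$ factor.

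The main obstacle is the measure-theoretic justification of the step where $\{v:\pmb{\alpha}_{\A}(v)\in\eta\}$ is identified with $\pmb{\alpha}^*_{\A}(\eta)$. Because $\pmb{\alpha}_{\A}$ is genuinely a set-valued (multivalued) map and is only single-valued $\mathscr{H}^{n-1}$-almost everywhere on $\Omega_C$, the equivalence $v\in\pmb{\alpha}^*_{\A}(\eta)\Longleftrightarrow\pmb{\alpha}_{\A}(v)\cap\eta\neq\emptyset$ from the displayed biconditional before \eqref{singleton} differs from the pointwise statement $\pmb{\alpha}_{\A}(v)\in\eta$ only on the set where $\pmb{\alpha}_{\A}(v)$ is not a singleton. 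I would appeal to the fact, used already in \cite{LYZ22}, that this exceptional set has spherical Lebesgue measure zero, so that the two descriptions of the domain agree up to a null set and the integrals coincide. I would also note that since $g$ is a bounded Borel function and $\A$ is $(C,p,q)$-close, the integrand is integrable by \eqref{Cpqclose}, so the manipulation above is legitimate and no convergence issues arise. This mirrors the argument for \cite[Proposition 4.3]{LYZ18} in the convex body setting, the difference here being only the restriction of the domains to $\Omega_C$ and $\Omega_{C^\circ}$ and the sign conventions built into $h_C(\A,\cdot)<0$.
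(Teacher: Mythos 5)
Your proposal is correct and coincides with the paper's own proof: both substitute $g=\mathbbm{1}_\eta$ into \eqref{borel&q} (resp.\ \eqref{borel&0}), identify $\widetilde{C}_{p,q}(\A,\eta)$ via Definition \ref{pqdual}, and use \eqref{singleton} to replace $\mathbbm{1}_\eta(\pmb{\alpha}_{\A}(v))$ by $\mathbbm{1}_{\pmb{\alpha}^*_{\A}(\eta)}(v)$. Your extra remark on the almost-everywhere single-valuedness of the radial Gauss map is a welcome (and correct) justification of a step the paper performs silently.
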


\begin{proof} We only need to verify the first statement since the other one follows along the same lines. Indeed, formula \eqref{need to prove-1} follows from \eqref{singleton}, \eqref{borel&q}, and Definition \ref{pqdual}. That is, for each Borel set $\eta \subseteq \Omega_{C^\circ}$, 
\begin{align*}
\widetilde{C}_{p, q}(\A, \eta)
&=\int_{\Omega_{C^\circ}} \mathbbm{1}_\eta(u)d\widetilde{C}_{p, q}(\A, u)\\
&=\int_{\Omega_{C^\circ}} \mathbbm{1}_\eta(u)(-h_C(\A, u))^{-p}d\widetilde{C}_q(\A, u)\\
&=\frac{1}{n}\int_{\Omega_C} \mathbbm{1}_\eta({\pmb{\alpha}}_{\A}(v))(-h_C(\A, {\pmb{\alpha}}_{\A}(v)))^{-p}\rho_C(\A, v)^qdv\\
&=\frac{1}{n}\int_{\Omega_C} \mathbbm{1}_{{\pmb{\alpha}}^*_{\A}(\eta)}(v)(-h_C(\A, {\pmb{\alpha}}_{\A}(v)))^{-p}\rho_C(\A, v)^qdv\\
&=\frac{1}{n}\int_{{\pmb{\alpha}}^*_{\A}(\eta)} (-h_C(\A, {\pmb{\alpha}}_{\A}(v)))^{-p}\rho_C(\A, v)^qdv.
\end{align*} This completes the proof. 
\end{proof}

The following Minkowski type problem is of interest. A special case for $C$-close sets has been, independently, proposed by Chen and Tu in their recent work \cite[Problem 2.2]{CT24}. 

\begin{problem}[The $L_p$ dual Minkowski problem for $C$-compatible sets]\label{qthdual?}
For $p, q \in \mathbb{R}$ and a nonzero finite Borel measure $\mu$ defined on  $\Omega_{C^\circ}$, under what conditions does there exist a $C$-compatible set $\A \in \Ln$  such that $\mu=\widetilde{C}_{p, q} (\A, \cdot)?$
\end{problem}
When $q=n$, it reduces to the $L_p$ Minkowski problem for $C$-compatible sets \cite{Sch18, Sch21, YYZ22}. 
When $p=0$, it reduces to the dual Minkowski problem previously proposed by Li, Ye, and Zhu \cite{LYZ22}. Therefore, our attention will focus on the case where $0\neq p\in \mathbb{R}$ and $q \in \mathbb{R}$. 

If the density $f$ has enough smoothness and let $\,d\mu= f\,du$ with $f: \Omega_{C^\circ} \rightarrow [0, \infty)$, to solve the above $L_p$ dual Minkowski problem for $C$-compatible sets is equivalent to find a convex solution $h: \Omega_{C^\circ} \rightarrow (-\infty, 0)$ to the following Monge-Amp\`{e}re type equation 
\begin{equation}\label{MA-unbounded-1}
\big(\!-h(u)\big)^{1-p} \det\big(\bar{\nabla}^2 h(u)+h(u) I\big)=    f(u)  (h^2(u)+|\bar{\nabla} h(u)|^2)^{\frac{n-q}{2}},
\end{equation}
where $\bar{\nabla}$ and $\bar{\nabla}^2$ denote the gradient and Hessian operators with respect to an orthonormal frame on $S^{n-1}$,   $I$ is the identity matrix and $|x|$ is the Euclidean norm of $x\in \mathbb{R}^n$.  Independently, a special case for $C$-close sets has been proposed by Chen and Tu  in their recent work \cite[(1.2)]{CT24}. Moreover, they studied related existence, regularity, and uniqueness of the above Monge-Amp\`{e}re type equation \eqref{MA-unbounded-1} for $p\geq 1$ in \cite[Theorem 1.2]{CT24}. An existence and optimal global H\"{o}lder regularity in the case $p<1$ and $q\geq n$ were studied in \cite[Theorem 1.3]{CT24}. Our goal in Section  \ref{sec4} of the present paper is to find weak solutions to the above Monge-Amp\`{e}re type equation \eqref{MA-unbounded-1} for $p\leq 0$ and $p<q$, and also to establish the uniqueness of solutions to
Problem \ref{qthdual?} (i.e., the $L_p$ dual Minkowski problem for $C$-compatible sets) for $p\leq q$.  

For a compact subset $\omega \subset \Omega_{C^\circ}$, denote by $C^{+}(\omega)$ and $C(\omega)$ the set of positive continuous functions on $\omega$ and the set of continuous functions on $\omega$, respectively. Let $f_0 \in C^{+}(\omega)$, $g \in C(\omega)$ and $\epsilon >0$ be sufficiently small. Define $f_t \in C^{+}(\omega)$ for $t \in (-\epsilon, \epsilon)$ and $u \in \omega$ by
\begin{equation}\label{ft}
\log f_t(u)=\log f_0(u)+tg(u)+o(t, u),
\end{equation}
where the function $o(t, \cdot): \omega \rightarrow \mathbb{R}$ is continuous and $\lim_{t \rightarrow 0} o(t, u)/t=0$ uniformly on $\omega$. By $[f_t]=[C, \omega, f_t]$, we mean the Wulff shape associated with $f_t$: 
\begin{equation}\label{wul}
[f_t]=C \,\cap\,\, \bigcap_{u \in \omega} H^{-} (u, -f_t(u)).
\end{equation}

The following results have been proved in \cite[Theorem 5.1 and Theorem 8.1]{LYZ22}.
Recall that $\widetilde{V}_q(\A)$ and $\mathbb{E}(\A)$ are the $q$-th dual volume and the dual entropy of $\A \in \Ln$ defined by \eqref{qthdual_Volume} and \eqref{dual_entropy}, respectively.

\begin{lemma} \label{var}
Let $q \in \mathbb{R}$ and $\omega \subset \Omega_{C^\circ}$ be a compact set. If
$f_0, g, f_t, [f_t]$ are given in \eqref{ft} and \eqref{wul}, then 
\begin{align*}
\lim_{t \rightarrow 0} \frac{\widetilde{V}_q ([f_t])-\widetilde{V}_q ([f_0])}{t}&=q \int_{\omega} g(u) d\widetilde{C}_q ([f_0], u)\ \ \text{for}\ \ q\neq 0,\\ 
\lim_{t\rightarrow 0}\frac{\widetilde{\mathbb{E}}([f_t])-\widetilde{\mathbb{E}}([f_0])}{t}&=
\int_{\omega}g(u)\,dJ^*([f_0], u)
\ \ \text{for}\ \ q= 0.
\end{align*}
\end{lemma}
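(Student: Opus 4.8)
The plan is to express both limits as $t$-derivatives of integrals of the radial function over $\Omega_C$ and to differentiate under the integral sign, the crux being a pointwise logarithmic derivative of the radial function of the Wulff shape. Recall from \eqref{qthdual_Volume} that $\widetilde V_q([f_t])=\frac1n\int_{\Omega_C}\rho_C([f_t],v)^q\,dv$ and from \eqref{dual_entropy} that $\widetilde{\mathbb E}([f_t])=\int_{\Omega_C}\log\rho_C([f_t],v)\,dv$, so the two difference quotients equal $\frac1n\int_{\Omega_C}\frac{\rho_C([f_t],v)^q-\rho_C([f_0],v)^q}{t}\,dv$ and $\int_{\Omega_C}\frac{\log\rho_C([f_t],v)-\log\rho_C([f_0],v)}{t}\,dv$, respectively. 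First I would record that, since $\omega$ is compact and $f_0\in C^{+}(\omega)$, for $|t|<\epsilon$ the functions $f_t$ are bounded between two positive constants; consequently each $[f_t]$ is $C$-determined by $\omega$, hence $C$-full, and $\widetilde V_q([f_0])<\infty$.

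The key step is the claim that for almost every $v\in\Omega_C$,
$$\lim_{t\to0}\frac{\log\rho_C([f_t],v)-\log\rho_C([f_0],v)}{t}=g\big(\pmb\alpha_{[f_0]}(v)\big),$$
together with the two-sided comparison $\min_{u\in\omega}\frac{f_t(u)}{f_0(u)}\le\frac{\rho_C([f_t],v)}{\rho_C([f_0],v)}\le\max_{u\in\omega}\frac{f_t(u)}{f_0(u)}$. Both follow from an envelope (squeeze) argument. Fix $v$ for which $u_0:=\pmb\alpha_{[f_0]}(v)$ is a singleton in $\omega$; this excludes only a set of measure zero. Writing $r(v)=\rho_C([f_t],v)v\in\partial[f_t]$ and using $[f_t]\subseteq H^{-}(u,-f_t(u))$ for every $u\in\omega$ while the supporting hyperplane at $v$ gives equality, one gets, since $v\cdot u_0<0$, both $\rho_C([f_t],v)\ge -f_t(u_0)/(v\cdot u_0)$ and $\rho_C([f_0],v)=-f_0(u_0)/(v\cdot u_0)$, hence $\rho_C([f_t],v)/\rho_C([f_0],v)\ge f_t(u_0)/f_0(u_0)$. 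Applying the same reasoning with $u_t:=\pmb\alpha_{[f_t]}(v)$ (equality now for $[f_t]$, inclusion for $[f_0]$) yields $\rho_C([f_t],v)/\rho_C([f_0],v)\le f_t(u_t)/f_0(u_t)$. Taking logarithms, dividing by $t$ (reversing the inequalities when $t<0$), and letting $t\to0$ squeezes the quotient between $g(u_0)+o(1)$ and $g(u_t)+o(1)$; since \eqref{ft} gives $o(t,\cdot)/t\to0$ uniformly and $g$ is continuous, the claim follows once $u_t\to u_0$.

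I expect the convergence $u_t=\pmb\alpha_{[f_t]}(v)\to u_0=\pmb\alpha_{[f_0]}(v)$ to be the main obstacle. It rests on the Hausdorff convergence $[f_t]\to[f_0]$ (Definition \ref{convergence}), which follows from the uniform convergence $f_t\to f_0$ on $\omega$, combined with continuity of the radial Gauss map at those directions $v$ where $\pmb\alpha_{[f_0]}(v)$ is a single point; the almost-everywhere single-valuedness of $\pmb\alpha_{[f_0]}$ and the fact that $\pmb\alpha_{[f_0]}(v)\in\omega$ for a.e. $v$ (because $[f_0]$ is $C$-determined by $\omega$) are what make the limit identity integrate correctly. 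These points require careful handling of the effective domains $\Omega_C^{e},\Omega_{C^\circ}^{e}$ and of the boundary behaviour as $v\to\partial\Omega_C$, but they are exactly the technical facts already developed for $C$-compatible sets.

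Finally I would pass to the limit. From the two-sided comparison and \eqref{ft}, for $|t|$ small one has $e^{-M|t|}\le\rho_C([f_t],v)/\rho_C([f_0],v)\le e^{M|t|}$ with $M=\|g\|_{C(\omega)}+1$, so for $q\ne0$ the integrand is dominated by $C\,\rho_C([f_0],v)^q$, which is integrable since $\widetilde V_q([f_0])<\infty$, while for $q=0$ the integrand is bounded by the constant $M$ on the finite-measure set $\Omega_C$. Dominated convergence then gives, for $q\ne0$,
$$\lim_{t\to0}\frac{\widetilde V_q([f_t])-\widetilde V_q([f_0])}{t}=\frac qn\int_{\Omega_C}g\big(\pmb\alpha_{[f_0]}(v)\big)\rho_C([f_0],v)^q\,dv=q\int_{\omega}g\,d\widetilde C_q([f_0],\cdot),$$
the last equality being \eqref{borel&q} with $p=0$ (the measure concentrates on $\omega$ as $[f_0]$ is $C$-determined by $\omega$); and for $q=0$, $\lim_{t\to0}\frac{\widetilde{\mathbb E}([f_t])-\widetilde{\mathbb E}([f_0])}{t}=\int_{\Omega_C}g(\pmb\alpha_{[f_0]}(v))\,dv=\int_{\omega}g\,dJ^*([f_0],\cdot)$ by \eqref{Borel&q0}. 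This would complete the proof.
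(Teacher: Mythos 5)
Your proof is correct, but note that the paper itself does not prove Lemma \ref{var}: it imports the statement from \cite[Theorems 5.1 and 8.1]{LYZ22}, where the argument is exactly the one you give --- the pointwise logarithmic derivative of $\rho_C([f_t],\cdot)$ via a two-sided envelope comparison, convergence of the touching normals $u_t\to u_0$ at directions where the normal is unique, and dominated convergence, following the scheme of \cite{HLYZ2016}. Two simplifications are available in this setting and would tighten your write-up: since $v\cdot u\le -c<0$ uniformly for $v\in C\cap S^{n-1}$ and $u$ in the compact set $\omega$ (as $\omega\subset\mathrm{int}\,C^{\circ}$), one has the explicit formula $\rho_C([f_t],v)=\max_{u\in\omega} f_t(u)/(-v\cdot u)$, which makes both of your comparison inequalities and the convergence of maximizers $u_t\to u_0$ elementary compactness facts rather than delicate statements about continuity of the radial Gauss map; and because $[f_0]$ is $C$-determined by $\omega$, hence $C$-full, the radial functions $\rho_C([f_t],\cdot)$ are bounded between positive constants uniformly in small $t$, so your dominating function can be taken constant for every $q\in\mathbb{R}$, not only for $q=0$. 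One point to phrase carefully: in your upper bound you should let $u_t$ be \emph{any} active normal (any maximizer of the envelope) at $v$, since $\pmb{\alpha}_{[f_t]}(v)$ need not be a singleton for $t\neq 0$; the compactness argument still forces every such choice to converge to the unique maximizer $u_0$, which is all the squeeze requires.
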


We now prove the variational formula for the $(p, q)$-th dual curvature measure of $C$-determined sets for $p, q\in \R$ with $p\neq 0$. To this end, 
let $\omega\subset \Omega_{C^\circ}$ be a compact set and $f_t$ be defined  by: for $u\in \omega$, 
\begin{equation}\label{ftt}
f_t(u)=(f_0(u)^p+tg(u))^\frac{1}{p},
\end{equation}
where $g \in C(\omega)$ and $t \in (-\epsilon, \epsilon)$ for some $\epsilon>0$ sufficiently small. As $f_0 \in C^{+}(\omega)$, one can find  a constant  $\epsilon_0$  such that
\begin{equation*}
0<\epsilon_0<\frac{\min_{u \in \omega} f_0(u)^p}{\max_{u \in \omega}|g(u)|+1}.
\end{equation*}
Hence, it can be verified that $f_t \in C^+(\omega)$  for all $t \in (-\epsilon, \epsilon)\subseteq (-\epsilon_0, \epsilon_0)$. Since both $g$ and $f_0$ are continuous functions on $\omega$ and $f_0$ is positive,  by the chain rule, it follows that
\begin{equation*}
\lim_{t\rightarrow 0} \frac{\log f_t(u)-\log f_0(u)}{t}=\frac{1}{p} \cdot  \frac{g(u)}{f_0(u)^{p}} \in C(\omega) 
\end{equation*}
uniformly on $\omega$. Then, we can rewrite $f_t$ in the form of \eqref{ft} by the Taylor extension formula as
\begin{equation}\label{fzthm3.1-1}
\log f_t(u)=\log f_0(u)+\left(\frac{1}{p} \cdot \frac{g(u)}{f_0(u)^p}\right)t +o(t, u),
\end{equation} where $o(t, \cdot)$ is continuous on $\omega$ and $\lim_{t \rightarrow 0} o(t, \cdot)/t=0$ uniformly on $\omega$. Applying Lemma \ref{var} to \eqref{fzthm3.1-1}, one can easily get the following variational formula for the $(p, q)$-th dual curvature measure. The case for $p=0$ is Lemma \ref{var} itself.

\begin{theorem}\label{varpq}
Let $p, q \in \mathbb{R}$, and $\omega\subset \Omega_{C^\circ}$ be a compact subset.  Let $f_0, g, f_t$ be given by \eqref{ftt} for $p\neq 0$ and by \eqref{ft} for $p=0$. Let $[f_t]$ be defined by \eqref{wul}. Then, the following statements hold: 
\begin{itemize}
\item [(i)] if $q\neq 0$, one has
\begin{align*}
\lim_{t \rightarrow 0} \frac{\widetilde{V}_q ([f_t])-\widetilde{V}_q ([f_0])}{t}=
\left\{
\begin{array}{ll}
\frac{q}{p} \int_{\omega} g(u) f_0(u)^{-p}d\widetilde{C}_{q} ([f_0], u),\ \ &\text{for} \ \ p\neq 0,\\ \\
q \int_{\omega}  g(u) d\widetilde{C}_{q} ([f_0], u),\ \ &\text{for} \ \ p= 0;
\end{array}
\right.
\end{align*}
\item[(ii)] if $q= 0$, one has
\begin{align*}
\lim_{t \rightarrow 0} \frac{\widetilde{\mathbb{E}} ([f_t])-\widetilde{\mathbb{E}} ([f_0])}{t}=\left\{
\begin{array}{ll}
\frac{1}{p} \int_{\omega} g(u) f_0(u)^{-p} dJ^* ([f_0], u),\ \ &\text{for} \ \ p\neq 0,\\ \\
\int_{\omega} g(u) dJ^* ([f_0], u),\ \ &\text{for} \ \ p= 0.
\end{array}
\right.
\end{align*}
\end{itemize}
\end{theorem}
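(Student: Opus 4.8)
The plan is to reduce the statement for $p \neq 0$ to the already-established Lemma \ref{var} (which is the case $p = 0$) by reparametrizing the family $f_t$. The key observation is that the multiplicative $p$-perturbation in \eqref{ftt} can be rewritten, up to a higher-order error, as a logarithmic additive perturbation of the form \eqref{ft}, and Lemma \ref{var} is designed to accept exactly that logarithmic form. Thus the present theorem is really a corollary obtained by matching the two parametrizations and simplifying a constant.

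First I would observe that when $p = 0$ the two displayed formulas are precisely the content of Lemma \ref{var}, so nothing further is required. For $p \neq 0$, with $f_t$ given by \eqref{ftt}, I would confirm that $f_t \in C^{+}(\omega)$ for all sufficiently small $t$; this is guaranteed by the choice of $\epsilon_0$ recorded above, using that $\omega$ is compact and $f_0$ is continuous and strictly positive, so that $\min_{u \in \omega} f_0(u)^p > 0$ and $\max_{u \in \omega}|g(u)| < \infty$. Next, expanding $\log\big((f_0(u)^p + tg(u))^{1/p}\big)$ in $t$ about $t = 0$, as recorded in \eqref{fzthm3.1-1}, I would identify the effective logarithmic perturbation
\[
\tilde g(u) = \frac{1}{p}\cdot\frac{g(u)}{f_0(u)^p} = \frac{1}{p}\, g(u)\, f_0(u)^{-p},
\]
together with a remainder $o(t, \cdot)$. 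The one place where a genuine verification is needed is that $\tilde g \in C(\omega)$ and that $o(t, u)/t \to 0$ uniformly on $\omega$ as $t \to 0$; both follow from the continuity and positivity of $f_0$ and the compactness of $\omega$, which keep the Taylor remainder of $\log(\cdot)$ uniformly controlled. This uniformity is exactly the hypothesis demanded by Lemma \ref{var}.

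With $f_t$ now cast in the form \eqref{ft} with perturbation $\tilde g$, I would apply Lemma \ref{var} directly. For $q \neq 0$ this gives
\[
\lim_{t \to 0} \frac{\widetilde{V}_q([f_t]) - \widetilde{V}_q([f_0])}{t} = q \int_\omega \tilde g(u)\, d\widetilde{C}_q([f_0], u) = \frac{q}{p} \int_\omega g(u)\, f_0(u)^{-p}\, d\widetilde{C}_q([f_0], u),
\]
while for $q = 0$ the entropy formula in Lemma \ref{var} yields $\int_\omega \tilde g(u)\, dJ^*([f_0], u) = \frac{1}{p} \int_\omega g(u)\, f_0(u)^{-p}\, dJ^*([f_0], u)$. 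Substituting $\tilde g = \tfrac{1}{p} g f_0^{-p}$ and pulling the constant $1/p$ outside the integral produces exactly the four cases of the statement.

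I do not anticipate a substantive obstacle: all the analytic content — the variational differentiation of $\widetilde{V}_q$ and $\widetilde{\mathbb{E}}$ along Wulff shapes — is already carried by Lemma \ref{var}. The hard part, to the extent there is one, is purely the bookkeeping of confirming the uniform vanishing of $o(t,u)/t$, which is what legitimizes passing from the $p$-power parametrization \eqref{ftt} to the logarithmic parametrization required by Lemma \ref{var}; once that is in place, the result is immediate.
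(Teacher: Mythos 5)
Your proposal is correct and is essentially identical to the paper's own argument: the authors likewise verify $f_t \in C^{+}(\omega)$ via the constant $\epsilon_0$, use the chain rule together with the compactness of $\omega$ and the positivity of $f_0$ to obtain the uniform logarithmic expansion \eqref{fzthm3.1-1} with effective perturbation $\frac{1}{p}\,g\,f_0^{-p}$, and then invoke Lemma \ref{var}, noting that the case $p=0$ is Lemma \ref{var} itself. No discrepancies to report.
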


\section{Existence of solutions to the \texorpdfstring{$L_p$}{} dual Minkowski problem for \texorpdfstring{$(C, p, q)$}{}-close sets}\label{sec4}

In this section, 
we will prove the existence of solutions to the $L_p$ dual Minkowski problem for $(C, p, q)$-close sets, under certain conditions on $p, q\in \R.$

\subsection{The \texorpdfstring{$L_p$}{} dual Minkowski problem for \texorpdfstring{$C$}{}-determined sets} In this subsection, we aim to solve the $L_p$ dual Minkowski problem for $C$-determined sets with concentration on $p\neq 0$. The case for $p=0$ has been studied in \cite{LYZ22}.

Let $0\neq p\in \mathbb{R}$. Consider a nonzero finite Borel measure $\mu$ defined on $\Omega_{C^\circ}$, with its support concentrated on a compact set $\omega \subset \Omega_{C^\circ}$. 
For $f \in C^+(\omega)$, let $$\|f\|_p=\left(\int _{\omega} f(u)^p\,d\mu(u)\right)^{\frac{1}{p}}.$$ Note that $\|\cdot\|_p$ defines a norm if $p\geq 1$. It is easily checked that, for all $p\neq 0$,  $\|f\|_p\leq \|g\|_p$ for any $f\leq g$. Moreover, for any $\lambda >0$, $\|\lambda f\|_p=\lambda \|f\|_p$. Define the functional  $\Phi: C^+(\omega)\rightarrow \R$ by
\begin{equation}\label{Phif}
\Phi(f)=\begin{cases}
 - \log \|f\|_p+\frac{1}{q}\log \widetilde{V}_{q}([f]), \ \ q\neq 0,\\ \\
-  \log \|f\|_p + \frac{1}{|\Omega_{C}|}\widetilde{\mathbb{E}}([f]),\ \ \ \ \ q= 0,  
\end{cases}
\end{equation}  
where $|\Omega_{C}|=\int_{\Omega_{C}}dv$. 
The expression \eqref{wul} implies that $[f] \in  \mathscr{K}(C, \omega)$, and thus, $\Phi$ is homogeneous of degree zero. To this end, for any $\lambda>0$, the fact that  $[\lambda f]=\lambda[f]$ yields $\widetilde{V}_{q}([\lambda f])=\lambda^{q}\widetilde{V}_{q}([f])$ by \eqref{qthdual_Volume}, and $\widetilde{\mathbb{E}}( [\lambda f])=\widetilde{\mathbb{E}}([f])+ |\Omega_{C}|\cdot \log \lambda$ following from \eqref{dual_entropy}.
It follows from \eqref{Phif} that, for $q\neq 0$,  
\begin{align}\label{h0}
\Phi(\lambda f) \notag
&=-  \log \| \lambda f\|_p+\frac{1}{q}\log \widetilde{V}_{q}([\lambda f])\\ \notag
&=- \log \|f\|_p- \log \lambda +\frac{1}{q}\log \widetilde{V}_{q}([f])+\frac{1}{q}\log\lambda^{q} =\Phi(f),
\end{align} and the case for $q=0$ follows along the same lines. 

For $Q \in \mathscr{K}(C, \omega)$, we define $\Phi(Q)$ as
\begin{equation}\label{PhiQ}
\Phi(Q)=\Phi(-h_C(Q, \cdot))=
\begin{cases}
-  \log \|-h_C(Q, \cdot)\|_p +\frac{1}{q}\log \widetilde{V}_{q}(Q), \ \  q\neq 0,\\ \\
-  \log \|-h_C(Q,\cdot)\|_p  +\frac{1}{|\Omega_{C}|}\widetilde{\mathbb{E}}(Q),\ \ \ \ q=0.
\end{cases}
\end{equation}
  
This leads us to consider the following optimization problems: 
$$\Upsilon_f=\inf \{  \Phi(f) : f \in C^{+}(\omega) \}\,\,\,\,\, \text{and}\,\,\,\,\,\,\Upsilon_Q=\inf \{  \Phi(Q) : Q \in \mathscr{K}(C, \omega) \}.$$ Both optimization problems are well-defined as the functional $\Phi(\cdot)$ is  homogeneous of degree $0$. 

The subsequent lemma, resulting from the combination of \cite[Lemma 6.1 and Lemma 8.1]{LYZ22}, asserts the properties of $C$-full sets $\A \in \mathscr{K}(C, \omega)$. 

\begin{lemma} \label{Cqb&t2}
Let $q \in \mathbb{R}$. The following statements hold. 

\begin{itemize}
\item [(i)]  Let $q\neq 0$. There is a constant $t_0>0$, only depending on $\omega \subset \Omega_{C^\circ}$ and $C$  with the following property: if $\A\in \mathcal{K}(C, \omega)$ and $\widetilde{V}_q(C\,\backslash \A)=1$, then   $C \cap H_{t_0} \subset \A$.   
\item [(ii)] There is a constant $t_1>0$, only depending on $\omega \subset \Omega_{C^\circ}$ and $C$ with the following property:   if $\A\in \mathcal{K}(C, \omega)$ and  $\widetilde{\mathbb{E}} (C\setminus \A)=0$, then $C \cap H_{t_1} \subset \A$.
\end{itemize}
\end{lemma}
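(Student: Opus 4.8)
The plan is to reduce both parts to a single uniform upper bound on the depth quantity
\[
M:=\max_{u\in\omega}\big(-h_C(\A,u)\big)=\max_{u\in\omega}\overline h_C(A,u),
\]
which controls how far the coconvex set $A=C\setminus\A$ reaches; once $M$ is bounded by a constant depending only on $\omega$ and $C$, the cap inclusion will follow from the geometry of the cone. First I would record the explicit radial function of a $C$-determined set. For $\A\in\mathscr K(C,\omega)$ and $v\in\Omega_C$, unwinding \eqref{Cdeter} and \eqref{radial-1-A} and using that $v\cdot u<0$ for every $u\in\omega\subset\mathrm{int}\,C^\circ$ (interior directions of $C^{\circ}$ pair strictly negatively with $C\setminus\{o\}$) gives
\[
\rho_C(\A,v)=\sup_{u\in\omega}\frac{-h_C(\A,u)}{-\,v\cdot u}=\sup_{u\in\omega}\frac{\overline h_C(A,u)}{|v\cdot u|}.
\]
Selecting a maximizer $u$ of $\overline h_C(A,\cdot)$ on the compact set $\omega$ and using $|v\cdot u|\le1$, this yields the pointwise lower bound $\rho_C(\A,v)\ge M$ for all $v\in\Omega_C$.

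Second, I would feed this lower bound into the normalization. Since $\A\in\mathscr K(C,\omega)$ is $C$-full, $\widetilde V_q(\A)$ and $\widetilde{\mathbb E}(\A)$ are finite. For $q\neq0$, the hypothesis reads $\int_{\Omega_C}\rho_C(\A,v)^q\,dv=n$ via \eqref{qthdual_Volume}; comparing $\rho_C^q$ with $M^q$ (so $\rho_C^q\ge M^q$ for $q>0$ and $\rho_C^q\le M^q$ for $q<0$) and integrating over $\Omega_C$ gives $M^q\le n/|\Omega_C|$ when $q>0$ and $M^q\ge n/|\Omega_C|$ when $q<0$. Because $t\mapsto t^{1/q}$ is increasing for $q>0$ and decreasing for $q<0$, both cases produce the same bound $M\le\big(n/|\Omega_C|\big)^{1/q}$, a constant depending only on $C$ and $q$. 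For $q=0$, the hypothesis $\widetilde{\mathbb E}(\A)=\int_{\Omega_C}\log\rho_C(\A,v)\,dv=0$ from \eqref{dual_entropy} combined with $\log\rho_C\ge\log M$ forces $|\Omega_C|\log M\le0$, hence $M\le1$. In every case $M$ is bounded by an explicit constant independent of the particular $\A$.

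Third, I would convert the bound on $M$ into the asserted inclusion. Since $\omega$ is a compact subset of $\mathrm{int}\,C^\circ$, the continuous map $\hat x\mapsto\max_{u\in\omega}\hat x\cdot u$ is strictly negative on the compact set $C\cap\sphere$, so $c_1:=-\max_{\hat x\in C\cap\sphere}\max_{u\in\omega}\hat x\cdot u>0$ depends only on $\omega$ and $C$. For $x\in C$ we then have $\max_{u\in\omega}x\cdot u\le-c_1|x|$, so if $|x|\ge M/c_1$ then $x\cdot u\le-M\le-\overline h_C(A,u)=h_C(\A,u)$ for every $u\in\omega$, i.e.\ $x\in\A$. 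Using $x\cdot\xi\le|x|$ and the bound on $M$, the threshold $t_0:=c_1^{-1}\big(n/|\Omega_C|\big)^{1/q}$ (respectively $t_1:=c_1^{-1}$ when $q=0$) depends only on $\omega$ and $C$, and every $x\in C$ with $x\cdot\xi\ge t_0$ lies in $\A$; equivalently $A=C\setminus\A$ sits inside the fixed bounded cap $C_{t_0}$, which is exactly the inclusion $C\cap H_{t_0}\subset\A$ claimed.

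The main obstacle is not any single estimate but making everything uniform over the whole family $\mathscr K(C,\omega)$ and arranging a single argument that simultaneously covers $q>0$, $q<0$, and the entropy case $q=0$. The two facts that make this possible are the clean radial formula for $C$-determined sets, which delivers the sharp pointwise bound $\rho_C\ge M$, and the uniform strict negativity (over $C\cap\sphere$) of the pairing with directions in $\omega$ — precisely the point at which the compactness of $\omega$ inside $\mathrm{int}\,C^\circ$ is indispensable, since it both yields $c_1>0$ and guarantees the radial function stays bounded so the dual volume is finite.
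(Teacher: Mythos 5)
Your proof is correct, but it cannot be compared line-by-line with the paper's own argument for a simple reason: the paper never proves this lemma, importing it instead as ``the combination of Lemmas 6.1 and 8.1 of \cite{LYZ22}.'' What you have produced is therefore a self-contained substitute, and it holds up at every step: the formula $\rho_C(\A,v)=\sup_{u\in\omega}\frac{-h_C(\A,u)}{-v\cdot u}$ is the correct unwinding of \eqref{Cdeter} and \eqref{radial-1-A} (the denominator is strictly positive because interior directions of $C^\circ$ pair strictly negatively with $C\setminus\{o\}$), the pointwise bound $\rho_C(\A,\cdot)\ge M$ follows from $|v\cdot u|\le 1$, the normalizations $\widetilde V_q=1$ and $\widetilde{\mathbb E}=0$ then force $M\le(n/|\Omega_C|)^{1/q}$ and $M\le 1$ respectively (your case split via the monotonicity of $t\mapsto t^{1/q}$ is handled correctly in both signs of $q$), and the compactness constant $c_1=-\max_{\hat x\in C\cap S^{n-1}}\max_{u\in\omega}\hat x\cdot u>0$ converts the bound on $M$ into the inclusion. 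In fact you prove the stronger half-space statement $C\cap H^{+}(\xi,t_0)\subset\A$, which contains the asserted $C\cap H_{t_0}\subset\A$ and is exactly the form in which the lemma is used in the proof of Lemma \ref{inf}. Compared with the cited source (whose arguments, in the spirit of \cite[Lemma 4]{Sch18}, proceed by contradiction from a point of $C\setminus\A$ lying far out along the cone), your version buys explicit thresholds $t_0=c_1^{-1}(n/|\Omega_C|)^{1/q}$ and $t_1=c_1^{-1}$ and a single mechanism covering $q>0$, $q<0$, and the entropy case simultaneously. Two cosmetic points only: the maximum defining $M$ is attained because $h_C(\A,\cdot)$, being convex and finite on the open cone $\mathrm{int}\,C^\circ$, is continuous on the compact set $\omega$ (alternatively, run the argument with the supremum and an $\epsilon$); and your constants depend on $q$ as well as on $\omega$ and $C$, which is consistent with the lemma since $q$ is fixed before the constant is chosen.
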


The following lemma, which summarizes \cite[Lemma 5 and Lemma 6]{Sch18} is needed. Recall that $\mathbb{N}_0=\mathbb{N}\cup \{0\}$.

\begin{lemma}\label{L56}
Let $\omega\subset \Oc$ be a compact set. The following statements hold true.
\begin{itemize}
\item [(i)] If $f_i\in C^+(\omega)$ for all $i\in \mathbb{N}_0$ such that  $f_i\rightarrow f_0$ uniformly on $\omega$ as $i\rightarrow \infty$, then  $[C, \omega, f_j]\rightarrow [C, \omega, f]$.
\item [(ii)] If $\{\A_j\}_{j\in \mathbb{N}} \subset \mathcal{K}(C, \omega)$ such that $\A_j\rightarrow \A_0$ for some $C$-full set $\A_0$, then $\A_0 \in \mathcal{K}(C, \omega)$.
\end{itemize}
\end{lemma}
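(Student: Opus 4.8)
The plan is to read both statements as stability properties of the Wulff-shape map $f\mapsto [C,\omega,f]$ under the convergence of Definition \ref{convergence}; these are in essence \cite[Lemmas 5 and 6]{Sch18}, and I would reprove them in the present notation, deriving (ii) from (i).

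For (i), the starting observation is that $f\mapsto[f]$ is order-reversing: if $f\le g$ on $\omega$ then $-f\ge -g$, so $H^-(u,-f(u))\supseteq H^-(u,-g(u))$ for every $u\in\omega$, whence $[g]\subseteq[f]$. Given $\epsilon>0$, uniform convergence yields an $N$ with $f_0-\epsilon\le f_i\le f_0+\epsilon$ on $\omega$ for all $i\ge N$ (the bounds being constants), so
\begin{equation*}
[f_0+\epsilon]\subseteq[f_i]\subseteq[f_0-\epsilon].
\end{equation*}
It therefore suffices to show the sandwiching sets collapse to $[f_0]$ as $\epsilon\downarrow 0$. One checks directly from the definition that $\bigcap_{\epsilon>0}[f_0-\epsilon]=[f_0]$ and $\overline{\bigcup_{\epsilon>0}[f_0+\epsilon]}=[f_0]$, using continuity of $f_0$ and compactness of $\omega$ (each $[f_0+\epsilon]$ is nonempty since $x\cdot u\to-\infty$ along rays of $C$). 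Intersecting with a fixed truncation $C_t$, which is bounded for $t>0$, these monotone families of compact convex sets converge in the Hausdorff metric to $[f_0]\cap C_t$; since $[f_0+\epsilon]\cap C_t\subseteq[f_0]\cap C_t,[f_i]\cap C_t\subseteq[f_0-\epsilon]\cap C_t$, a squeeze gives $[f_i]\cap C_t\to[f_0]\cap C_t$ for every large $t$, which is exactly $[f_i]\to[f_0]$ in the sense of Definition \ref{convergence}.

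For (ii), I reduce to (i). Each $\A_j\in\mathcal{K}(C,\omega)$ is its own Wulff shape over $\omega$, i.e. $\A_j=[\,-h_C(\A_j,\cdot)\,]$, and $-h_C(\A_j,\cdot)|_\omega\in C^+(\omega)$ because $h_C(\A_j,\cdot)<0$ on $\Oc\supseteq\omega$ and support functions are continuous there. The crux is the claim that $\A_j\to\A_0$ forces $-h_C(\A_j,\cdot)\to -h_C(\A_0,\cdot)$ uniformly on $\omega$. Granting this, part (i) gives $\A_j=[\,-h_C(\A_j,\cdot)\,]\to[\,-h_C(\A_0,\cdot)\,]=:\A_0'$, while by hypothesis $\A_j\to\A_0$; since limits in the sense of Definition \ref{convergence} are unique (the Hausdorff limits of $\A_j\cap C_t$ agree for all large $t$, and one takes the union over $t$), we conclude $\A_0=\A_0'=[\,-h_C(\A_0,\cdot)\,]\in\mathcal{K}(C,\omega)$, as desired.

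The main obstacle is precisely the uniform convergence of support functions on $\omega$ claimed in (ii): because the $\A_j$ are unbounded, one cannot invoke the textbook equivalence between Hausdorff convergence and uniform convergence of support functions directly. The point to exploit is that for $u\in\omega\subset\Oc=\sphere\cap\operatorname{int}C^\circ$ the functional $x\mapsto x\cdot u$ is strictly negative on $C\setminus\{o\}$ and tends to $-\infty$ along every ray of $C$, so the supremum defining $h_C(\A_j,u)$ is attained on the bounded effective boundary $\partial_e\A_j$. Since $\A_0$ is $C$-full and $\A_j\to\A_0$, these effective boundaries lie in a common truncation $C_{t_0}$ for all large $j$; the support values in directions $u\in\omega$ are then determined by $\A_j\cap C_{t_0}$, and the Hausdorff convergence $\A_j\cap C_{t_0}\to\A_0\cap C_{t_0}$ delivers uniform convergence on the compact set $\omega$ by the standard bounded-body estimate. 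Making the choice of $t_0$ uniform in $j$, using $C$-fullness of the limit, is the one delicate point; the remaining steps are routine.
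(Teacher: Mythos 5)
The paper itself does not prove this lemma: it is stated as a summary of \cite[Lemma 5 and Lemma 6]{Sch18}, so there is no internal argument to compare against, and your blind reconstruction follows the same standard route as Schneider's original proofs. Part (i) is correct: order-reversal of $f\mapsto[C,\omega,f]$, the sandwich $[f_0+\epsilon]\subseteq[f_i]\subseteq[f_0-\epsilon]$, and the collapse of both envelopes to $[f_0]$ (your dilation $\lambda x$, $\lambda\downarrow 1$, is the right mechanism for $\overline{\bigcup_{\epsilon>0}[f_0+\epsilon]}=[f_0]$). One small point deserves care when you truncate: the dilation $\lambda x$ may leave $C_t$ when $x$ lies on $H(\xi,t)$, so to get $[f_0+\epsilon]\cap C_t\rightarrow[f_0]\cap C_t$ in the Hausdorff metric you should instead contract toward a fixed interior point, e.g.\ $z_0\in[f_0+1]\cap C_t$: then $(1-s)x+sz_0\in[f_0+s]\cap C_t$ by convexity, which is a one-line fix. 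The reduction of (ii) to (i), together with uniqueness of limits in the sense of Definition \ref{convergence} and the fact that every $\A\in\mathcal{K}(C,\omega)$ equals its own Wulff shape $[C,\omega,-h_C(\A,\cdot)]$, is exactly right.

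The only genuine overstatement is in your justification of the uniform convergence $-h_C(\A_j,\cdot)\rightarrow-h_C(\A_0,\cdot)$ on $\omega$: you assert that the effective boundaries $\partial_e \A_j$ lie in a common truncation $C_{t_0}$, attributing this to $C$-fullness of $\A_0$. As stated this is not justified (the convergence of Definition \ref{convergence} is local, so it does not a priori bound $C\setminus\A_j$ uniformly in $j$), but it is also more than you need. What the argument requires is only that the support points in directions $u\in\omega$ lie in a common bounded set, and this is immediate from two facts: by Definition \ref{convergence} there exist $y_j\in\A_j\cap C_{t_0}$, hence $h_C(\A_j,u)\geq y_j\cdot u\geq -R$ for some $R$ independent of $j$ and of $u\in\omega$; and compactness of $\omega\subset\mathrm{int}\,C^\circ$ gives a constant $c>0$ with $x\cdot u\leq -c|x|$ for all $x\in C$ and $u\in\omega$, so any $x\in\A_j$ with $|x|>R/c$ is irrelevant to the supremum. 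Consequently $h_C(\A_j,u)=h(\A_j\cap C_{t'},u)$ for a fixed $t'$ and all $u\in\omega$ (and likewise for $\A_0$), and the Hausdorff convergence $\A_j\cap C_{t'}\rightarrow\A_0\cap C_{t'}$ of compact convex sets yields the uniform convergence on $\omega$ by the standard estimate. With that repair your proof is complete and is, in substance, the argument of \cite{Sch18} that the paper imports.
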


The following lemma shows the continuity of functional $\Phi(Q)$ for $ Q \in \mathscr{K}(C, \omega)$.  
\begin{lemma}\label{conPhi}
Let $\{\A_j\}_{j\in \mathbb{N}} \subset \mathscr{K}(C, \omega)$ be a sequence such that $\A_j\rightarrow \A_0 \in \mathscr{K}(C, \omega)$. Then, $\lim_{j \rightarrow \infty} \Phi(\A_j)=\Phi(\A_0)$.
\end{lemma}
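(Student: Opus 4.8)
The plan is to split the functional $\Phi(\cdot)$ from \eqref{PhiQ} into its two terms and prove the convergence of each separately. Since $\A_j, \A_0 \in \mathscr{K}(C, \omega)$ are $C$-full, every quantity appearing in \eqref{PhiQ} is finite, so it suffices to establish:
\begin{itemize}
\item[(a)] $\|-h_C(\A_j, \cdot)\|_p \to \|-h_C(\A_0, \cdot)\|_p$ as $j \to \infty$; and
\item[(b)] $\widetilde{V}_q(\A_j) \to \widetilde{V}_q(\A_0)$ when $q \neq 0$, respectively $\widetilde{\mathbb{E}}(\A_j) \to \widetilde{\mathbb{E}}(\A_0)$ when $q = 0$.
\end{itemize}
Both limits will be through positive finite quantities, so the continuity of $\log$ then yields $\Phi(\A_j) \to \Phi(\A_0)$.

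For (a), the first step is to upgrade the set convergence of Definition \ref{convergence} to the uniform convergence $h_C(\A_j, \cdot) \to h_C(\A_0, \cdot)$ on $\omega$. Since $\omega$ is a compact subset of $\mathrm{int}\, C^\circ$, there is a constant $c_0 > 0$ with $x \cdot u \le -c_0|x|$ for all $x \in C$ and $u \in \omega$; combined with the lower bound on $h_C(\A_j, u)$ furnished by $\A_j \cap C_{t_0} \ne \emptyset$, this confines every maximizer realizing $h_C(\A_j, u)$ (for $u \in \omega$) to a single bounded set $C_{t^*}$, independent of $j$. Consequently $h_C(\A_j, u) = h_C(\A_j \cap C_{t^*}, u)$ on $\omega$, and the Hausdorff convergence $\A_j \cap C_{t^*} \to \A_0 \cap C_{t^*}$ gives uniform convergence of the support functions on $\omega$ by the classical equivalence for compact convex sets. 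Since $-h_C(\A_0, \cdot)$ is continuous and strictly positive on the compact set $\omega$ and $\mu$ is finite, passing the uniform limit inside $\int_\omega(-h_C(\A_j, u))^p\, d\mu(u)$ yields (a).

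For (b), I would first transfer the convergence to the radial functions, arguing that the set convergence forces $\rho_C(\A_j, v) \to \rho_C(\A_0, v)$ for almost every $v \in \Omega_C$, and then integrate against $dv$ via dominated convergence in \eqref{qthdual_Volume} and \eqref{dual_entropy}. The crux is a uniform two-sided bound $\delta \le \rho_C(\A_j, v) \le R$ valid for all large $j$ and all $v \in \Omega_C$. The uniform positivity of the support functions from (a), namely $\min_{u \in \omega}(-h_C(\A_j, u)) \ge \delta > 0$ for large $j$, forces $C \cap B(o, \delta) \subseteq A_j = C \setminus \A_j$, whence $\rho_C(\A_j, v) \ge \delta$; the same uniform convergence rules out points of $A_j$ escaping to infinity (such a point $y \in C$ would satisfy $h_C(\A_j, u) < y\cdot u \le -c_0|y|$, contradicting the boundedness of $h_C(\A_j, \cdot)$ on $\omega$), giving $A_j \subseteq B(o, R)$ and hence $\rho_C(\A_j, v) \le R$. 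With these bounds, $\rho_C(\A_j, \cdot)^q$ (for $q \ne 0$) and $\log \rho_C(\A_j, \cdot)$ (for $q = 0$) are dominated by constants on the finite-measure set $\Omega_C$, so the desired limits follow.

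I expect the main obstacle to lie in (b): securing the uniform lower bound $\delta$ on the radial functions. When $q < 0$ the integrand $\rho_C(\A_j, v)^q$ blows up as $\rho_C(\A_j, v) \to 0$, so convergence of the dual volumes genuinely fails without controlling, uniformly in $j$, how far the coconvex sets $A_j$ retreat toward the origin along directions approaching $\partial \Omega_C$. This is precisely where the uniform positivity inherited from (a) must be exploited; the two parts of the argument are therefore not independent, and the uniform convergence of support functions is the structural key that unlocks the dominated convergence in (b).
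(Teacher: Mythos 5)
Your proposal is correct, and its skeleton coincides with the paper's: split $\Phi$ into the $\|-h_C(\cdot\,,\cdot)\|_p$ term and the $\widetilde{V}_q$ (resp.\ $\widetilde{\mathbb{E}}$) term, pass the limit through each integral, and finish with continuity of $\log$. The difference is one of self-containedness. The paper's proof is essentially a citation of \cite[Lemma 5.2]{LYZ22}, which supplies uniform convergence $h_C(\A_j,\cdot)\to h_C(\A_0,\cdot)$ on $\omega$ and, crucially, \emph{uniform} convergence $\rho_C(\A_j,\cdot)\to\rho_C(\A_0,\cdot)$ on all of $\Omega_C$, after which the interchange of limit and integral is immediate. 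You re-derive this content from scratch: confining near-maximizers via $x\cdot u\le -c_0|x|$ (valid since $\omega$ is compact in $\mathrm{int}\,C^\circ$) together with $\A_j\cap C_{t_0}\neq\emptyset$ correctly reduces the support-function convergence on $\omega$ to Hausdorff convergence of the truncations $\A_j\cap C_{t^*}$; and your replacement of uniform radial convergence by a.e.\ pointwise convergence plus the explicit two-sided bounds $\delta\le\rho_C(\A_j,\cdot)\le R$ is a valid dominated-convergence substitute. Both bounds check out: uniform positivity of $-h_C(\A_j,\cdot)$ on $\omega$ indeed forces $C\cap B(o,\delta)\subseteq A_j$, and the escape-to-infinity argument is justified because $\A_j\in\mathscr{K}(C,\omega)$, so by \eqref{Cdeter} any $y\in A_j$ admits $u\in\omega$ with $y\cdot u>h_C(\A_j,u)$. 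Your version has the merit of making explicit the domination needed when $q<0$ (where $\rho^q$ blows up near small radial values), which the paper's bare interchange $\lim\int=\int\lim$ leaves buried in the cited lemma. The one link you assert rather than prove is the a.e.\ pointwise convergence $\rho_C(\A_j,v)\to\rho_C(\A_0,v)$ on $\Omega_C$; this is true and not hard given your bound $A_j\subseteq B(o,R)$ (each ray $\{rv:r>0\}$, $v\in\Omega_C$, meets $\partial\A_j$ inside a fixed truncation where Hausdorff convergence operates, and \cite[Lemma 5.2]{LYZ22} even upgrades this to uniform convergence), but as written it is the only unproved step in your chain.
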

\begin{proof} 
For $q\neq 0$, 
it follows from \cite[Lemma 5.2]{LYZ22} that 
$h_C(\A_j, \cdot)\rightarrow h_C(\A_0, \cdot)$ uniformly on $\omega$, as well as $\rho_C(\A_j, \cdot)\rightarrow \rho_C(\A_0, \cdot)$ uniformly on $\Omega_{C}$. This, in conjunction with formula \eqref{qthdual_Volume}, yields that
\begin{equation*}
\lim_{j \rightarrow \infty} \widetilde{V}_q(\A_j)=\lim_{j \rightarrow \infty} \frac{1}{n} \int_{\Omega_{C}} \rho_C(\A_j, v)^q dv=\frac{1}{n} \int_{\Omega_{C}} \lim_{j \rightarrow \infty} \rho_C(\A_j, v)^q dv=\frac{1}{n} \int_{\Omega_{C}} \rho_C(\A_0, v)^q dv=\widetilde{V}_q(\A_0),
\end{equation*} and $ \lim_{j \rightarrow \infty} \| -h_C(\A_j, \cdot)\|_p =\|-h_C(\A_0, \cdot)\|_p$, due to
\begin{equation*}
\lim_{j \rightarrow \infty} \int_{\omega} (-h_C(\A_j, u))^{p} d\mu(u)=\int_{\omega}\lim_{j \rightarrow \infty}(-h_C(\A_j, u))^{p} d\mu(u)=\int_{\omega} (-h_C(\A_0, u))^{p} d\mu(u).
\end{equation*}
Furthermore, since continuity is preserved for the composition of continuous functions, one can get $\lim_{j \rightarrow \infty} \Phi(\A_j)=\Phi(\A_0)$ for $q\neq 0$.

The case for $q=0$ follows along the same lines.  
\end{proof}
 
We now prove the existence of solutions to the optimization problem for $\Upsilon_Q$ in the case $p\neq 0$. See \cite{LYZ22} for the case $p=0$. 

\begin{lemma}\label{inf}
Let $p,q\in \mathbb{R}$, $p\neq 0,$ and $\mu$ be a nonzero finite Borel measure defined on $\Omega_{C^\circ}$ whose support concentrates on a compact set $\omega \subset \Omega_{C^\circ}$. There exists a $C$-full set $\A_0$ such that $\Phi(\A_0)=\Upsilon_Q$.
\end{lemma}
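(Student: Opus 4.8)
The plan is to run the direct method of the calculus of variations on a normalized minimizing sequence: extract a subsequence whose Wulff shapes converge to a genuine $C$-full set, and then invoke the continuity of $\Phi$ to see that the limit realizes the infimum. I treat $q\neq 0$ throughout; the case $q=0$ is identical after replacing $\widetilde{V}_q$ by the dual entropy $\widetilde{\mathbb{E}}$ and Lemma \ref{Cqb&t2}(i) by Lemma \ref{Cqb&t2}(ii).

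First I would check that $\Upsilon_Q$ is finite. It is bounded above by $\Phi(Q)$ for any fixed $Q\in\mathscr{K}(C,\omega)$, which is finite since every such $Q$ is $(C,p,q)$-close. For the lower bound, take a minimizing sequence $\A_j\in\mathscr{K}(C,\omega)$; as $\Phi$ is homogeneous of degree $0$, I rescale each $\A_j$ (using $\widetilde{V}_q(\lambda\A)=\lambda^q\widetilde{V}_q(\A)$ and $\|\lambda f\|_p=\lambda\|f\|_p$) so that $\widetilde{V}_q(\A_j)=1$, whence $\Phi(\A_j)=-\log\|-h_C(\A_j,\cdot)\|_p$ by \eqref{PhiQ}. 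Lemma \ref{Cqb&t2}(i) then produces a uniform $t_0>0$ with $C\cap H_{t_0}\subset\A_j$ for all $j$, so the coconvex sets $C\setminus\A_j$ lie in the bounded cap $C_{t_0}$ and the support functions satisfy $0\le -h_C(\A_j,u)\le M$ on $\omega$ for a constant $M=M(\omega,C)$. By the monotonicity of $\|\cdot\|_p$ this gives $\|-h_C(\A_j,\cdot)\|_p\le M\mu(\omega)^{1/p}$, hence $\Phi(\A_j)\ge-\log\!\big(M\mu(\omega)^{1/p}\big)>-\infty$, so $\Upsilon_Q$ is finite.

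Next I would extract a limit. The functions $f_j:=-h_C(\A_j,\cdot)$ are restrictions to the compact set $\omega\subset\Oc$ of support functions of sets contained in the bounded cap $C_{t_0}$, hence uniformly bounded by $M$ and equi-Lipschitz; by the Arzel\`a--Ascoli theorem a subsequence converges uniformly to a continuous $f_0\ge 0$ on $\omega$. By Lemma \ref{L56}(i) the Wulff shapes converge, $\A_j=[f_j]\to[f_0]=:\A_0$. The inclusion $C\cap H_{t_0}\subset\A_j$ passes to the limit, so $C\setminus\A_0\subset C_{t_0}$ is bounded; together with the nondegeneracy discussed below this makes $\A_0$ a $C$-full set, and Lemma \ref{L56}(ii) then yields $\A_0\in\mathscr{K}(C,\omega)$. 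Finally Lemma \ref{conPhi} gives $\Phi(\A_0)=\lim_j\Phi(\A_j)=\Upsilon_Q$, which is the assertion.

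The step I expect to be the main obstacle is the nondegeneracy of the limit: I must rule out $\A_0$ collapsing to $C$ itself, equivalently I must secure that $-h_C(\A_0,\cdot)$ stays bounded away from $0$ on $\operatorname{supp}\mu$. The finiteness of $\Upsilon_Q$ is the key: since $\Phi(\A_j)=-\log\|f_j\|_p\to\Upsilon_Q$, the norms satisfy $\|f_j\|_p\to e^{-\Upsilon_Q}\in(0,\infty)$, so the $f_j$ cannot collapse. For $p>0$ this immediately forces $\|f_0\|_p>0$, hence $f_0\not\equiv 0$ and $C\setminus\A_0$ has nonempty interior. For $p<0$ more care is required, since $\|\cdot\|_p$ need not be continuous where $f_0$ vanishes; there I would note that $\|f_j\|_p\to e^{-\Upsilon_Q}>0$ keeps $\int_\omega f_j^{\,p}\,d\mu=\|f_j\|_p^{\,p}$ uniformly bounded, and apply Fatou's lemma to the pointwise limit to deduce $\int_\omega f_0^{\,p}\,d\mu<\infty$, which forces $f_0>0$ for $\mu$-a.e.\ $u$ and hence precludes collapse near $\operatorname{supp}\mu$. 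In either case $\A_0$ is a $C$-full set whose support function is bounded away from $0$ on the compact $\omega$, which is exactly what is needed to invoke Lemmas \ref{L56}(ii) and \ref{conPhi} and close the argument.
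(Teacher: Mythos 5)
Your proposal is correct and follows the same skeleton as the paper's proof: normalize the minimizing sequence using the degree-zero homogeneity of $\Phi$ (the paper works in $\mathscr{L}=\{Q\in\mathscr{K}(C,\omega):\widetilde{V}_q(Q)=1\}$), invoke Lemma \ref{Cqb&t2} to get a uniform $t_0$ with $C\cap H_{t_0}\subset \A_j$, deduce the uniform bound on $-h_C(\A_j,\cdot)$ over $\omega$ and hence $\Upsilon_Q>-\infty$ (the paper compares with $-h_C(C\cap H^+_{t_0},\cdot)$ rather than a constant $M$, which is the same estimate), extract a convergent subsequence, and close with Lemmas \ref{L56} and \ref{conPhi}. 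Two local deviations are worth noting. First, you compactify via Arzel\`a--Ascoli on the support functions plus Lemma \ref{L56}(i), whereas the paper applies the Blaschke selection theorem to the truncations $\A_i\cap C_{t_0}$ and then uses Definition \ref{convergence} and Lemma \ref{L56}(ii); these are interchangeable. Second, your nondegeneracy paragraph replaces the paper's mechanism: the paper gets nondegeneracy for free from the constraint, since the proof of Lemma \ref{conPhi} gives $\widetilde{V}_q(\A_0)=\lim_j\widetilde{V}_q(\A_j)=1$, so $\A_0\neq C$ and hence $o\notin\A_0$ (any closed convex set containing $o$ and $C\cap H^+_{t_0}$ must be all of $C$), which is both simpler and works uniformly in $p$. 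Your norm-based argument is valid as far as it goes, but be careful about what it yields: for $p<0$ the Fatou step gives only $f_0>0$ $\mu$-a.e.\ (and for $p>0$ only $\|f_0\|_p>0$), not that $f_0$ is \emph{bounded away from zero on all of $\omega$} as your last sentence asserts, and not that $f_0\in C^+(\omega)$, which is what Lemma \ref{L56}(i) literally requires of the limit function. This is a repairable wrinkle rather than a fatal gap: if $f_0(u)=0$ at some $u\in\omega$, the corresponding Wulff constraint $x\cdot u\leq -f_0(u)=0$ is vacuous on $C$ (since $u\in\Omega_{C^\circ}$ implies $x\cdot u\leq 0$ for all $x\in C$), so one can verify the convergence $[f_j]\to[f_0]$ directly, or simply adopt the paper's volume-normalization argument, which sidesteps the issue entirely.
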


\begin{proof} 
For $q\neq 0$, as $\Phi$ is homogeneous of degree zero, one can simply let the constraint $Q\in \mathcal{K}(C, \omega)$ in the optimization problem for $\Upsilon_Q$ be the set  $$\mathscr{L}=\left\{ Q \in \mathscr{K}(C, \omega):\,\, \widetilde{V}_q( Q)=1\right\}.$$
	
Let us first verify $\Upsilon_Q>-\infty$. To this end, by using Lemma \ref{Cqb&t2}, one can find a constant $t_0>0$ such that $C \cap H_{t_0} \subset Q$ for each $Q \in \mathscr{L}$. Consequently, $C \cap H^{+}_{t_0} \subset Q$ and  $h_C(Q, \cdot) \geq h_C(C \cap H^{+}_{t_0}, \cdot)$ on $\omega$. As $\widetilde{V}_q( Q)=1$, it follows from \eqref{PhiQ} that 
\begin{equation}\label{Ct0>}
\Phi(Q)=-  \log \|-h_C(Q, \cdot)\|_p
\geq -  \log \|-h_C(C \cap H^{+}_{t_0}, \cdot)\|_p=m_1>-\infty.
\end{equation} As $m_1$ is a constant only depending on $C$ and $\omega$, one gets $\Upsilon_Q>-\infty,$ after taking the infimum of \eqref{Ct0>} over $Q\in \mathcal{L}$.

Let $\{\A_i\}_{i \in \mathbb{N}} \subset \mathscr{L}$ be a limiting sequence such that $\lim_{i \rightarrow \infty}\Phi(\A_i)=\Upsilon_Q$. Applying Lemma \ref{Cqb&t2} to each $\A_i$, one gets $C \cap H_{t_0} \subset \A_i$ for all $i \in \mathbb{N}$, where $t_0$ is a constant depending on $C$ and $\omega$ only. This further implies $\A_i \cap C_{t_0} \subset C_{t_0}$ for each $i \in \mathbb{N}$, where $C_{t_0}=C\cap H^-_{t_0}$ is a convex body. Note that each $\A_i \cap C_{t_0}$ is a compact convex set, and the sequence  $\{ \A_i \cap C_{t_0} \}_{i\in \mathbb{N}}$ is uniformly bounded (by $C_{t_0}$). Applying the Blaschke selection theorem, one can find a subsequence,  denoted by $\{ \A_{i_j} \cap C_{t_0} \}_{j\in \mathbb{N}}$, such that  $\A_{i_j} \cap C_{t_0}$ converges as $j \rightarrow \infty$,  in the Hausdorff metric, to some compact convex set $K \subset \mathbb{R}^n$. Moreover, a closed convex set $\A_0 \subset C$  can be found so that $K=\A_0 \cap H^{-}_{t_0}$ and   $C \cap H_{t_0} \subset \A_0$ (thus, $\A_0$ is $C$-full). Consequently, $\A_{i_j} \cap  H^{-}_{t}\rightarrow \A_0 \cap H^{-}_{t}$ for all $t\geq t_0$, and  from Definition \ref{convergence}, $\A_i \rightarrow \A_0$ as $i \rightarrow \infty$. It follows from Lemma \ref{L56} that $\A_0 \in  \mathscr{K}(C, \omega)$, which further yields, due to the proof of Lemma \ref{conPhi}, $
\widetilde{V}_q(\A_0)=\lim_{i \rightarrow \infty} \widetilde{V}_q(\A_i)
=1.$  In conclusion, one gets $\A_0 \in \mathscr{L}$. Applying Lemma  \ref{conPhi} again, one sees that, for $q\neq 0$,  $ \lim_{j \rightarrow \infty} \Phi(\A_0)=\Phi(\A_{i_j})= \Upsilon_Q.$  

The case for $q=0$ with the constraint set $\mathscr{F}=\left\{ Q \in \mathscr{K}(C, \omega):\,\, \widetilde{\mathbb{E}}( Q)=0\right\}$ follows from similar lines and hence will be omitted.
\end{proof}

\begin{lemma}\label{sol}
Let $p, q\in \mathbb{R}$, $p\neq 0$, and $\mu$ be a nonzero finite Borel measure defined on $\Omega_{C^\circ}$ whose support concentrates on a compact set $\omega \subset \Omega_{C^\circ}$. If there exists a $C$-full set $\A_0\in \mathcal{K}(C, \omega)$ such that $\Phi(\A_0)=\Upsilon_Q$, then 
\begin{equation*}
\mu= \tau_1 \cdot \widetilde{C}_{p, q} ( \A_0, \cdot) \ \ \mathrm{for}\ \ q\neq 0 \ \ \mathrm{and} \ \ \mu=\tau_2 \cdot J^*_p(\A_0, \cdot)  \ \ \mathrm{for}\ \ q=0, 
\end{equation*} where $\tau_1$ and $\tau_2$ are two constants given by \begin{align} \label{tau1-1-1}
\tau_1&=\frac{1}{\widetilde{V}_q(\A_0)}\int_{\omega} (-h_C(\A_0, u))^p d\mu(u) \ \ \mathrm{and}\ \  
\tau_2=\frac{1}{|\Omega_{C}|} \int_{\omega} (-h_C(\A_0, u))^{p} d\mu(u).
\end{align}\end{lemma}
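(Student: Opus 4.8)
The plan is to run a Lagrange-multiplier argument: since $\A_0$ minimizes $\Phi$ over $\mathscr{K}(C,\omega)$, a one-parameter $L_p$ perturbation of its negative support function must be stationary at the base point, and Theorem \ref{varpq} converts this stationarity into the asserted proportionality of $\mu$ and $\widetilde{C}_{p,q}(\A_0,\cdot)$. Concretely, set $f_0=-h_C(\A_0,\cdot)\in C^+(\omega)$; because $\A_0\in\mathscr{K}(C,\omega)$ is $C$-determined by $\omega$, one has $\A_0=[f_0]$. For an arbitrary $g\in C(\omega)$ I would form $f_t(u)=(f_0(u)^p+tg(u))^{1/p}$ as in \eqref{ftt}, which lies in $C^+(\omega)$ for $|t|$ small, and study the scalar function $t\mapsto\Phi(f_t)$, where for $q\neq0$ we have $\Phi(f_t)=-\log\|f_t\|_p+\frac1q\log\widetilde V_q([f_t])$.

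The key reduction, and the step I expect to be the main obstacle, is to show that $t=0$ is a genuine local minimum of $t\mapsto\Phi(f_t)$, even though the Wulff shape $[f_t]$ need not have $f_t$ as its own negative support function. Here I would combine two facts. First, the volume term in $\Phi(f_t)$ depends only on the shape $[f_t]$, so it coincides with the volume term of $\Phi([f_t])$. Second, since $[f_t]\subseteq H^-(u,-f_t(u))$ for every $u\in\omega$, one has $-h_C([f_t],u)\ge f_t(u)$ on $\omega$, whence the monotonicity of $\|\cdot\|_p$ gives $\|{-h_C([f_t],\cdot)}\|_p\ge\|f_t\|_p$ and therefore $\Phi([f_t])\le\Phi(f_t)$. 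Coupling these with the minimality $\Phi(\A_0)\le\Phi([f_t])$ and the equality $\Phi(f_0)=\Phi([f_0])=\Phi(\A_0)$ (valid because $[f_0]=\A_0$ and $-h_C(\A_0,\cdot)=f_0$ on $\omega$) yields $\Phi(f_t)\ge\Phi(f_0)$ for all small $t$, with equality at $t=0$. Consequently $\frac{d}{dt}\Phi(f_t)\big|_{t=0}=0$.

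It then remains to differentiate and equate to zero. Since $\|f_t\|_p^p=\|f_0\|_p^p+t\int_\omega g\,d\mu$, the norm term contributes $-\frac1p\big(\int_\omega g\,d\mu\big)\big/\|f_0\|_p^p$, while Theorem \ref{varpq}(i) gives $\frac{d}{dt}\widetilde V_q([f_t])\big|_{t=0}=\frac qp\int_\omega g\,f_0^{-p}\,d\widetilde C_q(\A_0,\cdot)$; recognizing $f_0^{-p}\,d\widetilde C_q(\A_0,\cdot)=d\widetilde C_{p,q}(\A_0,\cdot)$ via Definition \ref{pqdual}, stationarity reads $\big(\int_\omega g\,d\mu\big)\big/\|f_0\|_p^p=\big(\int_\omega g\,d\widetilde C_{p,q}(\A_0,\cdot)\big)\big/\widetilde V_q(\A_0)$ for every $g\in C(\omega)$. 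As $g$ is arbitrary, this forces $\mu=\tau_1\,\widetilde C_{p,q}(\A_0,\cdot)$ with $\tau_1=\|f_0\|_p^p/\widetilde V_q(\A_0)=\frac1{\widetilde V_q(\A_0)}\int_\omega(-h_C(\A_0,u))^p\,d\mu(u)$, which is \eqref{tau1-1-1}. The case $q=0$ follows along the same lines, replacing $\frac1q\log\widetilde V_q([f_t])$ by $\frac1{|\Omega_C|}\widetilde{\mathbb E}([f_t])$ and invoking Theorem \ref{varpq}(ii) in place of Theorem \ref{varpq}(i), yielding $\mu=\tau_2\,J^*_p(\A_0,\cdot)$ with $\tau_2=\|f_0\|_p^p/|\Omega_C|$, again as in \eqref{tau1-1-1}.
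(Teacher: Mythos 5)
Your proposal is correct and takes essentially the same approach as the paper: your local-minimum argument for $t\mapsto\Phi(f_t)$ is just an inline version of the paper's preliminary step $\Upsilon_Q=\Upsilon_f$, which rests on the same two facts you invoke (namely $\Phi(f)\geq\Phi([f])$, via $-h_C([f],\cdot)\geq f$ on $\omega$ and the monotonicity of $\|\cdot\|_p$, together with $Q=[-h_C(Q,\cdot)]$ for $Q\in\mathscr{K}(C,\omega)$). The subsequent stationarity computation — perturbing by \eqref{ftt}, differentiating the norm term exactly as you do, applying Theorem \ref{varpq}, and absorbing $f_0^{-p}\,d\widetilde{C}_q(\A_0,\cdot)$ into $d\widetilde{C}_{p,q}(\A_0,\cdot)$ — is identical to the paper's, including the constants $\tau_1=\|f_0\|_p^p/\widetilde{V}_q(\A_0)$ and $\tau_2=\|f_0\|_p^p/|\Omega_C|$.
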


\begin{proof} We only prove the arguments for $q\neq 0$ and omit the proof for the case $q=0$ which follows along the same lines. 

Let $q\neq 0$. We first  verify that $\Upsilon_Q=\Upsilon_f$.  On the one hand,  $\Upsilon_Q  \geq \Upsilon_f$ holds because, for each $Q \in \mathscr{K}(C, \omega)$, $-h_C(Q, \cdot) \in C^{+}(\omega)$ and $Q=[-h_C(Q, \cdot)]$. On the other hand, for all $f \in C^{+}(\omega)$, it follows from (\ref{wul}) that $h_C([f], \cdot) \leq -f$ and hence   $-h_C([f], \cdot) \geq f$ on $\omega$. By (\ref{Phif}) and (\ref{PhiQ}), one gets $\Phi(f) \geq \Phi(-h_C([f], \cdot))=\Phi([f])$. Taking the infimum over $f \in C^+(\omega)$, one has $\Upsilon_f \geq \Upsilon_Q$, and thus $\Upsilon_Q = \Upsilon_f$.

From Lemma \ref{inf}, one gets a $C$-full set $\A_0\in \mathcal{K}(C, \omega)$ such that $\Phi(\A_0)=\Upsilon_Q$. Let $f_0=-h(\A_0, \cdot) \in C^+(\omega)$ and $g \in C(\omega)$. Together with \eqref{pq-dual},  (\ref{ftt}), Theorem \ref{varpq}, and the fact that $\Upsilon_Q=\Upsilon_f$, one has 	
\begin{align*}
0=&\left.\dfrac{d}{dt} \Phi(f_t) \right|_{t=0}\\& 
=\left.\dfrac{d}{dt} \left( - \log \|f_t\|_p+\frac{1}{q}\log \widetilde{V}_q([f_t])  \right)
\right|_{t=0}\\
&=-\frac{\|f_0\|_p^{-p}}{p} \int_{\omega}  g(u)d\mu(u)+\frac{1}{p \widetilde{V}_q(\A_0)}  \int_{\omega} g(u) f_0(u)^{-p} d\widetilde{C}_q(\A_0, u)\\
&=-\frac{\|f_0\|_p^{-p}}{p} \int_{\omega}  g(u)d\mu(u)+\frac{1}{p \widetilde{V}_q(\A_0)}  \int_{\omega} g(u) d\widetilde{C}_{p,q}(\A_0, u). 
\end{align*}
This further yields 
$$ \|f_0\|_p^{-p} \int_{\omega}  g(u)d\mu(u)=\frac{1}{\widetilde{V}_q(\A_0)}  \int_{\omega} g(u) d\widetilde{C}_{p,q}(\A_0, u).$$
Since $g \in C(\omega)$ is arbitrary, one has $\mu=\tau_1 \cdot \widetilde{C}_{p, q} ( \A_0, \cdot)$ as desired.  
\end{proof}

{\noindent \em Proof of Theorem \ref{cd1}.} We only prove this theorem for $q\neq 0$ and omit the proof for $q=0$ which follows along the same lines.

Let $q\neq 0$ and $p \neq q$. Consider
a nonzero finite Borel measure $\mu$ on $\Omega_{C^\circ}$,
with its support concentrated on a compact subset $\omega \subset \Omega_{C^\circ}$. 
Due to Lemma \ref{inf} and  Lemma \ref{sol},  there exists a $C$-full set $\A_0 \in \mathcal{K}(C, \omega)$ satisfying that $\Phi(\A_0)=\Upsilon_Q$ and  $\mu=\tau_1 \cdot \widetilde{C}_{p, q} ( \A_0, \cdot)$ with   $\tau_1$ given by \eqref{tau1-1-1}.  Note that  the $(p, q)$-th dual curvature measure is homogeneous of degree $q-p$ (see  \eqref{homCpq}). This 
implies $\mu=\widetilde{C}_{p, q} ( \A, \cdot)=\widetilde{C}_{p, q} (\tau_1^{\frac{1}{q-p}}\A_0, \cdot)$ for $p \neq q$ by letting $\A=\tau_1^{\frac{1}{q-p}}\A_0$. \qed  
 
\vskip 2mm We now discuss the $L_p$ Alexandrov problem for $C$-compatible sets. Recall that $J^*(\A, \cdot)=\widetilde{C}_0(\A, \cdot)$. 
Note that the underline cone in $\widetilde{C}_0(\cdot, \cdot)$ can be clearly identified through the $C$-compatible sets.
For example, if $\A$ is a $C$-compatible set, its copolar $\copolar\subset C^{\circ}$ is a $C^{\circ}$-compatible set and hence   $\widetilde{C}_0(\copolar, \cdot)$ will be understood as the $0$-dual curvature measure of $\copolar$ with the underline cone $C^{\circ}.$ In  \cite{LYZ22}, such a measure is called the Alexandrov integral curvature measure for $\A\in \Ln$, namely 
$$J(\A, \cdot)=J^*(\A_C^\diamond, \cdot)=\widetilde{C}_0(\copolar, \cdot).$$ Similarly, one can also define the $L_p$ Alexandrov integral curvature measure of $\A$ by  $$J_p(\A, \cdot)=J^*_p(\A_C^\diamond, \cdot) \ \ \ \text{for}\,\,\,p\in \R,$$ and in this formula, the underline cone for $J_p(\cdot, \cdot)$ and $J^*_p(\cdot, \cdot)$ are $C$ and  $C^{\circ}$, respectively. Hence, the measure $J_p(\A, \cdot)$ is defined on (Borel subsets of) $\Omega_{C}$. The following $L_p$ Alexandrov problem for $C$-compatible sets can be posed. See \cite{LYZ22} for the case $p=0$.  

\begin{problem}[The $L_p$ Alexandrov problem for $C$-compatible sets]
For $0\neq p \in \mathbb{R}$ and a nonzero finite Borel measure $\nu$ defined on  $\Omega_{C}$, under what conditions does there exist a $C$-compatible set $\A$  such that $\nu=J_p(\A, \cdot)?$
\end{problem}

We now provide the existence of solutions to the $L_p$ Alexandrov problem for $C$-determined sets. The case for $p=0$ can be found in \cite[Theorem 8.3]{LYZ22}. 

\begin{theorem}\label{Lp_Alex}
Let $0\neq p \in \mathbb{R}$ and let $\nu$ be a nonzero finite Borel measure defined on $\Omega_C$ whose support concentrates on a compact set $\varpi\subset\Omega_C$. Then there exists a $C$-compatible set $\A\subset C$ such that $\A_{C}^{\diamond}$ is a $C^\circ$-full set and  $J_p(\A,\cdot)=\nu$. 
\end{theorem}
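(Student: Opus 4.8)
The plan is to reduce the $L_p$ Alexandrov problem for $C$-determined sets to the $L_p$ dual Minkowski problem for $C$-determined sets already settled in Theorem \ref{cd1}, by exploiting the polarity between $C$ and $C^\circ$ together with the copolar duality between $C$-compatible and $C^\circ$-compatible sets. The key observation is that $J_p(\A, \cdot)$ is, by its very definition, nothing but $J^*_p(\A_C^\diamond, \cdot)$ computed with respect to the cone $C^\circ$; hence producing the desired $\A$ amounts to producing its copolar as a suitable $C^\circ$-full set.

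First I would note that the polar cone $C^\circ$ is again a pointed closed convex cone with nonempty interior and that $(C^\circ)^\circ = C$, whence $\Omega_{(C^\circ)^\circ} = \Omega_C$. Consequently the given measure $\nu$, a nonzero finite Borel measure on $\Omega_C$ with support concentrated on the compact set $\varpi \subset \Omega_C$, is precisely of the form treated by Theorem \ref{cd1}(ii) once $C$ is replaced throughout by $C^\circ$ (so that $\Oc$ becomes $\Omega_{(C^\circ)^\circ} = \Omega_C$ and the compact set $\omega$ becomes $\varpi$). Applying Theorem \ref{cd1}(ii) in this form with $q = 0$ yields a $C^\circ$-full set $\B$ such that $\nu = J^*_p(\B, \cdot)$, where $J^*_p(\B, \cdot) = \widetilde{C}_{p,0}(\B, \cdot)$ is understood with underlying cone $C^\circ$ and is a measure on $\Omega_C$.

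Next I would set $\A = \B_{C^\circ}^\diamond$, the copolar of $\B$ taken with respect to the cone $C^\circ$. Since $\B$ is $C^\circ$-compatible, its copolar is a $(C^\circ)^\circ = C$-compatible set contained in $C$ (this is the copolar duality of Section \ref{section:2} with the roles of $C$ and $C^\circ$ interchanged), so $\A \subset C$ is admissible. The bipolar theorem, applied to the $C^\circ$-compatible set $\B$, gives $\A_C^\diamond = (\B_{C^\circ}^\diamond)_C^\diamond = \B$, which is $C^\circ$-full, as required. Finally, unwinding the definition of the $L_p$ Alexandrov integral curvature measure gives $J_p(\A, \cdot) = J^*_p(\A_C^\diamond, \cdot) = J^*_p(\B, \cdot) = \nu$, completing the argument.

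The main obstacle I expect is bookkeeping rather than analysis: one must verify that every object behaves correctly under the swap $C \leftrightarrow C^\circ$. In particular, one should check that Theorem \ref{cd1}(ii) is valid verbatim for the cone $C^\circ$ (its proof uses no feature of $C$ beyond being a pointed closed convex cone with nonempty interior), that the copolar of the $C^\circ$-full set $\B$ is genuinely $C$-compatible and contained in $C$, and that the bipolar identity is invoked in the correct direction. Once these consistency checks are in place, no further estimates are needed.
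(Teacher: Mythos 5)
Your proposal is correct and follows essentially the same route as the paper: apply Theorem \ref{cd1}(ii) with $q=0$ to the cone $C^\circ$ and the measure $\nu$ (supported on the compact set $\varpi\subset\Omega_C=\Omega_{(C^\circ)^\circ}$) to obtain a $C^\circ$-full set $\B$ with $\nu=J^*_p(\B,\cdot)$, then take $\A=\B^{\diamond}_{C^\circ}$ and invoke the bipolar identity so that $\A_C^{\diamond}=\B$ and $J_p(\A,\cdot)=J_p^*(\A_C^{\diamond},\cdot)=\nu$. Your extra consistency checks (that $C^\circ$ is again a pointed closed convex cone and that the copolar of $\B$ is $C$-compatible) are exactly the bookkeeping the paper leaves implicit.
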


\begin{proof} Applying Theorem \ref{cd1} to $q=0$, $\nu$,   $C^{\circ}$ and the compact set $\varpi\subset C$, a $C^{\circ}$-full set $\mathbb{B}\subset C^{\circ}$  can be found such that $\nu=J^*_{p} (\mathbb{B}, \cdot).$ Let $\A=\mathbb{B}^{\diamond}_{C^{\circ}}\subset C$. Then, $\A$ is a $C$-compatible set with $\mathbb{B}=\copolar.$ Thus, for $p\neq 0$, one has, 
$$J_p(\A, \cdot)=J_p^*(\copolar, \cdot)=J_p^*(\mathbb{B}, \cdot) = \nu.$$ This completes the proof. \end{proof}

\subsection{The \texorpdfstring{$L_p$}{} dual Minkowski problem
for \texorpdfstring{$(C,p,q)$}{}-close sets}

In this subsection, the existence of solutions to the $L_p$ dual Minkowski problem for $p\leq 0$ and $p<q$ is provided. Such an existence result relies on the approximation technique used by Schneider in \cite{Sch21}. This requires some preparation for notations, which we will follow those in \cite{LYZ22}. 

Recall that there exists a $\xi \in \Omega_{C}$ (fixed), such that $$C_t=C\cap H^-_{t}=C \cap H^-(\xi, t)$$ is bounded for all  $t>0$. Denote by $B_n$ the unit Euclidean ball in $\Rn.$ Let $b(\A)$ \cite{Sch23} be the distance of $\A \in \Ln$ to the origin $o$, namely,  
\begin{equation*}
b(\A)=\min \{r>0: rB_n \cap \A\neq \emptyset\}.
\end{equation*} It is easily checked that \begin{equation}\label{hb}
-h_C(\A, u)\leq b(\A) \,\,\,\text{for}\,\,\, u \in \Oc\,\,\,\text{and}\,\,\,\rho_C(\A, v)\geq b(\A)\,\,\,\text{for}\,\,\, v\in \Omega_C.
\end{equation} 
For $\tau>0$, let  $\overline{\omega}(\tau)=\{u \in \Oc: \delta_C(u) \geq \tau\},$ where 
\begin{equation*}
\delta_C(u)=\min \{\angle (u, \bar{u}):\ \widetilde{u} \in \partial \Oc\}
\end{equation*} with $\angle (u, \widetilde{u})$  the angle between $u$ and $\widetilde{u}$. Thus, $\delta_C(u)$ is the spherical distance of $u\in \Oc$ to $\partial \Oc$.  

The following lemma provides upper and lower bounds for $b(\A).$

\begin{lemma}\label{blu}
Let $\mu$ be a nonzero finite Borel measure defined on $\Oc$ and let $\A \in \Ln$ be such that $\mu=\widetilde{C}_{p ,q}(\A, \cdot)$. Let
$\tau>0$ be such that $\overline{\omega}(\tau)\subset \Oc$ and $$\widetilde{C}_{p, q}(\A, \overline{\omega}(\tau))=s_0>0.$$ For  $p\leq 0$ and $p<q$, there exist constants $0<\beta_0 <\beta _1<\infty$, depending on $C$ and $\tau$ only,  such that $$\beta_0\leq b(\A)\leq \beta_1.$$ 
\end{lemma}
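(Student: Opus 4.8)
The plan is to extract from the hypothesis $\widetilde{C}_{p,q}(\A,\overline{\omega}(\tau))=s_0$ two-sided control of both the support and the radial function of $\A$ over the ``deep'' cap $\overline{\omega}(\tau)$, and then to feed these into the integral representation of $\widetilde{C}_{p,q}$ in Proposition \ref{integral}. The starting point is a cone-depth constant: since $C\cap\sphere$ and $\overline{\omega}(\tau)\subset\mathrm{int}\,C^{\circ}$ are compact and $y\cdot u<0$ for every $y\in C\cap\sphere$, $u\in\overline{\omega}(\tau)$, the quantity
\[
\gamma:=-\max\{y\cdot u: y\in C\cap\sphere,\ u\in\overline{\omega}(\tau)\}\in(0,1]
\]
is strictly positive and depends only on $C$ and $\tau$, so that $x\cdot u\le-\gamma|x|$ for all $x\in C$, $u\in\overline{\omega}(\tau)$. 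First I would record the two-sided support estimate: for $u\in\overline{\omega}(\tau)$, using $-h_C(\A,u)=\inf_{x\in\A}(-x\cdot u)$ together with $-x\cdot u\ge\gamma|x|\ge\gamma\, b(\A)$ and, for the minimiser $x_0$ of $|x|$ over $\A$, $-x_0\cdot u\le|x_0|=b(\A)$, one obtains
\[
\gamma\, b(\A)\le -h_C(\A,u)\le b(\A),\qquad u\in\overline{\omega}(\tau).
\]

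Next I would turn this into a radial estimate on $\pmb{\alpha}^*_{\A}(\overline{\omega}(\tau))$. If $v\in\pmb{\alpha}^*_{\A}(\overline{\omega}(\tau))$ with normal $u=\pmb{\alpha}_{\A}(v)\in\overline{\omega}(\tau)$, then $r_{\A}(v)=\rho_C(\A,v)v\in H(u,h_C(\A,u))$ gives $\rho_C(\A,v)=(-h_C(\A,u))/(-v\cdot u)$; since $-v\cdot u\in[\gamma,1]$ and $\rho_C(\A,v)\ge b(\A)$ by \eqref{hb}, the support bound yields $b(\A)\le\rho_C(\A,v)\le \gamma^{-1}b(\A)$ there. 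Substituting both displays into Proposition \ref{integral} (and its $J^*$-analogue when $q=0$), and using that $t\mapsto t^{-p}$ is nondecreasing because $-p\ge0$, I would produce constants $0<c_1\le c_2$ depending only on $\gamma,p,q$ with
\[
\frac{c_1}{n}\,b(\A)^{q-p}\,\sigma\ \le\ s_0\ \le\ \frac{c_2}{n}\,b(\A)^{q-p}\,\sigma,
\qquad \sigma:=\int_{\pmb{\alpha}^*_{\A}(\overline{\omega}(\tau))}dv .
\]
The exponent $q-p>0$ is precisely where $p<q$ enters. Since $\pmb{\alpha}^*_{\A}(\overline{\omega}(\tau))\subseteq\Omega_C^e$ has spherical measure at most $|\Omega_C|$, the right-hand inequality immediately gives the lower bound $b(\A)\ge\big(ns_0/(c_2|\Omega_C|)\big)^{1/(q-p)}=:\beta_0>0$, depending only on $C,\tau$ and the fixed $s_0$.

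For the upper bound the left-hand inequality reduces the whole matter to a matching lower estimate $\sigma\ge\sigma_0(C,\tau)>0$ for the reverse radial image; granting this, $s_0\ge(c_1/n)\,b(\A)^{q-p}\sigma_0$ forces $b(\A)\le\beta_1:=\big(ns_0/(c_1\sigma_0)\big)^{1/(q-p)}$. I expect establishing $\sigma\ge\sigma_0$ to be the main obstacle, since a crude convexity bound does not suffice: a sharp ``near-vertex'' of the effective boundary $\partial_e\A$ could in principle sweep all of $\overline{\omega}(\tau)$ as a range of normals over an arbitrarily small radial aperture. The route I would take to exclude this is genuinely geometric: the effective boundary must join the portion carrying the normals in $\overline{\omega}(\tau)$ to the cone boundary $\partial C$, and because $\partial_e\A$ is convex and cannot enter the open ball of radius $b(\A)$ about $o$, the radial directions whose normal lies in $\overline{\omega}(\tau)$ are forced to fill a solid angle bounded below in terms of $C$ and $\tau$ only. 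Equivalently, for the $C$-full (indeed $C$-determined) sets arising in the approximation of Section \ref{sec4}, one may bound $b(\A)$ directly, since $\widetilde{V}_q(\A)=\widetilde{C}_q(\A,\Oc)\ge \tfrac1n b(\A)^q|\Omega_C|$ for $q>0$, combined with the deep-cap control $-h_C\ge\gamma b(\A)$, yields the same conclusion. I would carry out the argument first for $q\neq0$ and then note that $q=0$ is identical upon replacing $\widetilde{C}_q$ by $J^*$ and $\rho_C^q$ by $1$.
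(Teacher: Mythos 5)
Your lower bound is correct, and in fact more elementary than the paper's. The deep-cap constant $\gamma$, the two-sided estimates $\gamma\, b(\A)\le -h_C(\A,u)\le b(\A)$ on $\overline{\omega}(\tau)$ and $b(\A)\le \rho_C(\A,v)\le \gamma^{-1}b(\A)$ on $\pmb{\alpha}^*_{\A}(\overline{\omega}(\tau))$, and the substitution into Proposition \ref{integral} are all sound, and since $\sigma\le|\Omega_C|$ you get $b(\A)\ge\beta_0$ exactly as stated. The paper reaches the same bound less directly: it normalizes $b(\A)=1$, compares $\A$ with the auxiliary $C$-determined set $\mathbb{B}=C\cap\bigcap_{u\in\overline{\omega}(\tau)}\{z: z\cdot u\le -1\}$, and invokes Schneider's Lemma 8 to trap $\nu_{\A}^{-1}(\overline{\omega}(\tau))$ in a bounded slab $C_{t'}$, whence $\rho_C(\A,\cdot)\le\rho_C(C^+_{t'},\cdot)$ on the relevant set. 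Your pointwise identity $\rho_C(\A,v)=(-h_C(\A,u))/(-v\cdot u)\le\gamma^{-1}b(\A)$ replaces that machinery.

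The upper bound, however, has a genuine gap, and your route cannot be repaired: the estimate $\sigma\ge\sigma_0(C,\tau)$ is false, and so is the geometric claim offered in its support. The sharp-vertex scenario you worry about really occurs, in the following quantitative form. In the plane (the example lifts to $\Rn$), take $\A_R\in\mathscr{K}(C,\{u_0,u_1,u_2\})$ with $u_0\in\overline{\omega}(\tau)$ and $u_1,u_2\in\Oc\setminus\overline{\omega}(\tau)$, where the facet with normal $u_0$ sits at distance $\asymp R$ from the origin and the two side cuts trim its radial aperture to $\delta\asymp R^{p-q}$ (sliding $a_1,a_2$ places the corner points anywhere along the $u_0$-line; the corners themselves carry normals in $\overline{\omega}(\tau)$ but their reverse radial images are single directions, hence $\widetilde{C}_{p,q}$-null). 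Your own two-sided display then gives $s_0=\widetilde{C}_{p,q}(\A_R,\overline{\omega}(\tau))\asymp R^{q-p}\delta\asymp 1$ fixed, while $\sigma\asymp R^{p-q}\to 0$ and $b(\A_R)\asymp R\to\infty$. So no lower bound on $\sigma$, and indeed no upper bound on $b(\A)$, can depend on $C$, $\tau$ and $s_0$ alone; convexity plus avoidance of $b(\A)B_n$ does not force the radial image of $\overline{\omega}(\tau)$ to fill a definite solid angle.

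What saves the lemma --- and what the paper does --- is that the upper bound draws on different data: the total mass $\mu(\Oc)$, not the cap mass $s_0$. Writing $\mu(\Oc)=\frac1n\int_{\Omega_C}[-\pmb{\alpha}_{\A}(v)\cdot v]^{-p}\rho_C(\A,v)^{q-p}\,dv$ via Proposition \ref{integral}, restricting to the fixed compact cap $\overline{U}_{t/2}\subset\Omega_C$ around $\xi$, on which $-\pmb{\alpha}_{\A}(v)\cdot v\ge a_1>0$ by compactness of $\overline{U}_{t/2}\times\overline{\Oc}$, and using $\rho_C(\A,v)\ge b(\A)$ together with $q-p>0$, one gets $\mu(\Oc)\ge \frac{a_1^{-p}}{n}\,b(\A)^{q-p}\int_{\overline{U}_{t/2}}dv$, hence $b(\A)\le\beta_1$ with $\beta_1$ depending on $C$ and $\mu(\Oc)$. (The lemma's phrase ``depending on $C$ and $\tau$ only'' tacitly treats $s_0$ and $\mu(\Oc)$ as fixed data; in the counterexample above the total mass blows up as $R\to\infty$, so there is no contradiction --- but this is exactly why a proof from $s_0$ alone is impossible.) Your closing fallback via $\widetilde{V}_q(\A)\ge\frac1n b(\A)^q|\Omega_C|$ gestures at this mechanism but does not close the gap: it is restricted to $q>0$, and $\widetilde{V}_q(\A)$ is not among the hypotheses; the correct replacement is precisely $\mu(\Oc)$ as above.
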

\begin{proof} Let $p \leq 0$ and $p<q$. We first show the upper bound for $b(\A)$. For the fixed $\xi \in \Omega_C$, one can find $t>0$ small enough so that  
$U_t=\{v \in \Omega_C: \angle(\xi, v)<t\}\subset \Omega_C$. Set $$\overline{U}_{t/2}=\{v \in \Omega_C: \angle(\xi, v)\leq t/2\},$$ which is
a compact set and satisfies $\xi \in \overline{U}_{t/2} \subset U_t \subset \Omega_C$. It follows from \eqref{radial&Gauss}, \eqref{hb} and Proposition \ref{integral}
that 
\begin{align*}
\mu(\Oc)&=\widetilde{C}_{p, q}(\A, \Oc)\\
&=\frac{1}{n} \int_{\Omega_C} [-h_C(\A, {\pmb{\alpha}}_{\A}(v))]^{-p}\rho_C(\A, v)^q dv\\
&=\frac{1}{n} \int_{\Omega_C} [-{\pmb{\alpha}}_{\A}(v)\cdot v]^{-p}\rho_C(\A, v)^{q-p} dv\\
&\geq b(\A)^{q-p} \cdot \frac{1}{n} \int_{\Omega_C} [-{\pmb{\alpha}}_{\A}(v)\cdot v]^{-p} dv\\
&\geq b(\A)^{q-p} \cdot \frac{1}{n} \int_{\overline{U}_{t/2}} [-{\pmb{\alpha}}_{\A}(v)\cdot v]^{-p} dv.
\end{align*} 

Note that $\pmb{\alpha}_{\A}(v) \in \overline{\Oc}$, the closure of $\Oc$. The compactness of $\overline{U}_{t/2}$ and $\overline{\Oc}$, together with the continuity for inner product, yields that  $-{\pmb{\alpha}}_{\A}(v)\cdot v\in[a_1,1]$, where $a_1$ is a positive constant depending on $\xi$ and $t$ only.  
Therefore, we have
\begin{align*}
\mu(\Oc) 
\geq b(\A)^{q-p}\cdot\frac{a_1^{-p}}{n} \int_{\overline{U}_{t/2}}dv.
\end{align*} 
This further implies, as $p<q,$
\begin{align*}
 b(\A)
\leq \left(\frac{a_1^{-p}}{n\mu(\Oc)} \int_{\overline{U}_{t/2}}dv\right)^{\frac{1}{p-q}}:=\beta_1,  
\end{align*} where $\beta_1$ is a constant depending on $C$, $t$, and $\xi.$ Note that both $\xi$ and $t$ can be uniquely determined by $C$ only, so $\beta_1$ is a constant depending on $C$ only. 

Now let us prove the lower bound for $b(\A).$ Since $b(b(\A)^{-1}\A)=1$ for $\A \in \Ln$, 
without loss of generality, we consider $b(\A)=1$. Thus, there is a point $y \in \partial \A \cap B_n$. For any $x \in \nu_\A^{-1}(\overline{\omega}(\tau))$, we have $\nu_{\A}(x)\cap \overline{\omega}(\tau) \neq \emptyset$, and there exists $u \in \overline{\omega}(\tau)$ such that $h_C(\A, u)=x\cdot u$. 
Thus, the supporting hyperplane $H(u, h_C(\A, u))$ of $\A$ at $x$ satisfies $\A \subset H^{-}(u, h_C(\A, u))$, and it separates the origin $o$ and $y$ (as $y \in \A$); see
Figure \ref{fig1}. 
\begin{figure}[ht]
\centering    \includegraphics[width=0.62\textwidth]{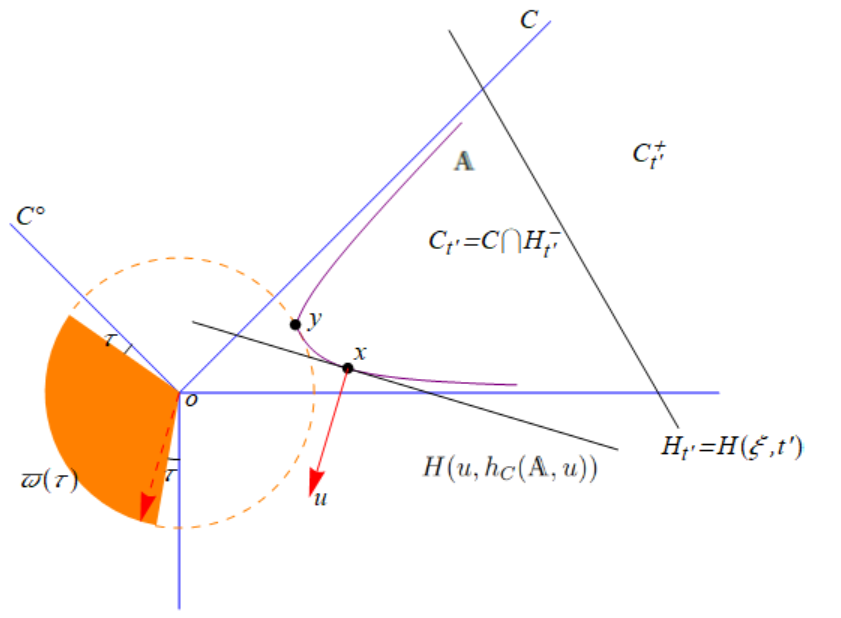}
\caption{}
\label{fig1}
\end{figure}

For any $u \in \overline{\omega}(\tau)$, one has
$ h_C(\A, u) \geq y\cdot u\geq -1$. 
Define the $C$-determined set (by the compact set $\overline{\omega}(\tau)$) 
\begin{equation*}
\mathbb{B}=C\cap \bigcap_{u\in \overline{\omega}(\tau)}  \Big\{z: z\cdot u\leq -1\Big\}.
\end{equation*}  Thus, by \cite[Lemma 8]{Sch18}, it follows that there exists $t'>0$ depending only on $C$ and $\tau$ such that $\nu_{\mathbb{B}}^{-1}(\overline{\omega}(\tau)) \subset C_{t'}$. Furthermore, one has  $\nu_{\A}^{-1}(\overline{\omega}(\tau)) \subset C_{t'}$. Thus,  for $v \in {\pmb{\alpha}}^*_{\A}(\overline{\omega}(\tau))$ (and hence $x=\rho_C(\A, v)v\in \nu_{\A}^{-1}(\overline{\omega}(\tau))$), one gets
$\rho_C(\A, v)\leq \rho_C(C^+_{t'}, v),
$ where $C^+_{t'}=C \cap H^+(\xi, {t'})$ is a $C$-compatible set. Note that $p\leq 0$ yields $[-{\pmb{\alpha}}_{\A}(v)\cdot v]^{-p}\leq 1.$ Together with \eqref{radial&Gauss} and Proposition \ref{integral},   one can get, for $q>p$,  \begin{align}
\widetilde C_{p, q}(\A, \overline{\omega}(\tau))&= \frac{1}{n} \int_{{\pmb{\alpha}}^*_{\A}(\overline{\omega}(\tau))} [-h_C(\A, {\pmb{\alpha}}_{\A}(v))]^{-p}\rho_C(\A, v)^q dv\nonumber\\
&=\frac{1}{n} \int_{{\pmb{\alpha}}^*_{\A}(\overline{\omega}(\tau))} [-{\pmb{\alpha}}_{\A}(v)\cdot v]^{-p}\rho_C(\A, v)^{q-p} dv\nonumber \\
&\leq \frac{1}{n} \int_{{\pmb{\alpha}}^*_{\A}(\overline{\omega}(\tau))} \rho_C(\A, v)^{q-p} dv\nonumber \\
&\leq \frac{1}{n} \int_{{\pmb{\alpha}}^*_{\A}(\overline{\omega}(\tau))} \rho_C(C^+_{t'}, v)^{q-p} dv\nonumber \\
&\leq \frac{1}{n} \int_{\Omega_C} \rho_C(C^+_{t'}, v)^{q-p} dv=c_2, \label{Form-1-bk=1}
\end{align}  where $c_2$ is a constant depending on $\tau$ and $C$ only. 

The inequality \eqref{Form-1-bk=1} is proved under the assumption that $b(\A)=1.$ For general $\A \in \Ln$, one can consider $b(\A)^{-1}\A$ which gives $b(b(\A)^{-1}\A)=1$. Then,  \eqref{Form-1-bk=1} is applied to $b(\A)^{-1}\A$ to get 
\begin{equation*}
c_2 \geq \widetilde C_{p, q}(b(\A)^{-1}\A, \overline{\omega}(\tau)) =b(\A)^{p-q}\widetilde C_{p, q}(\A, \overline{\omega}(\tau))=b(\A)^{p-q}\cdot s_0,
\end{equation*} where we have used \eqref{homCpq}. After a rearrangement, one gets, for $p\leq 0$ and $p<q$,  $$b(\A)\geq \Big(\frac{s_0}{c_2}\Big)^\frac{1}{q-p}=\beta_0.$$ This concludes the proof.
\end{proof}

Let $\mu$ be a nonzero finite Borel measure defined on $\Omega_{C^\circ}$. Denote by $\mathcal{O}$ the family of open sets of $\Omega_{C^\circ}$ satisfying that if $\omega \in \mathcal{O}$ then $\emptyset \neq \omega \subset \overline{\omega} \subset \Omega_{C^\circ}$ and $\mu(\omega)\neq 0$, where $\overline{\omega}$ refers to the closure of $\omega$ and hence $\overline{\omega}$ is compact. For $\omega \in \mathcal{O}$, let $\mu_{\omega}$ be the Borel measure obtained from $\mu$ restricted on $\omega$, i.e., $\mu_{\omega}(\cdot)=\mu(\omega \cap \cdot)$. Clearly, $\mu_{\omega}$ is finite and its support concentrates on the compact set $\overline{\omega}.$ Applying Theorem \ref{cd1} to measure $\mu_{\omega}$ yields the existence of a $C$-full set $\A_{\omega}\in \mathcal{K}(C, \overline{\omega})$ such that $$\mu_\omega=\widetilde{C}_{p, q} (\A_\omega, \cdot)$$ for $ p, q \in \mathbb{R}\setminus \{0\}$ and $p\neq q$.  

Let $\tau>0$ be a constant such that $\mu(\overline{\omega}(\tau))>0$. 
Taking a sequence $\{\omega_j\}_{j \in \mathbb{N}} \subset \mathcal{O}$ of open sets in $\Omega_{C^\circ}$  such that $\overline{\omega}(\tau) \subset \omega_1$, 
$$\omega_j \subset \overline{\omega_j} \subset \omega_{j+1}\subset \Omega_{C^\circ} \ \ \mathrm{for\  all\ } \ j \in \mathbb{N},$$   and $ \cup_{j \in \mathbb{N}} {\omega_j}=\Omega_{C^\circ}.$  
Let $\mu_j=\mu(\omega_j\cap \cdot)$,  
and $\mu_j(\eta)\rightarrow \mu(\eta)$ for each Borel set $\eta\subset\Oc$. 
For $p\leq 0$ and $p<q$,
since each $\overline{\omega_j}$ is compact and each $\mu_j$ is a nonzero finite Borel measure whose support is concentrated on $\overline{\omega_j}$, it follows from Theorem \ref{cd1} that, 
for any $j \in\mathbb{N}$, 
there exists a $C$-full set $\A_j\in \mathcal{K}(C, \overline{\omega_j})$ such that
$$\,d\mu_j=\,d\widetilde{C}_{p, q}(\A_j, \cdot)=(-h_C(\A_j, \cdot))^{-p}\,d\widetilde{C}_q(\A_j, \cdot).$$ Furthermore, 
\begin{equation*}
s_0=\mu(\overline{\omega}(\tau))=\mu(\overline{\omega}(\tau) \cap \omega_j)=\mu_j(\overline{\omega}(\tau))=\widetilde C_{p, q}(\A_j, \overline{\omega}(\tau)). \end{equation*} From Lemma \ref{blu} and the fact that $\mu$ is a finite measure on $\Oc$, one has, for any $j\in \mathbb{N}$,  
\begin{equation*}
\overline{\beta_0}=\left(\frac{s_0}{c_2} \right)^\frac{1}{q-p} \leq b(\A_j)\leq \left( \frac{a_1^{-p}}{n\mu_j(\Oc)}\right)^\frac{1}{p-q} \leq \left( \frac{a_1^{-p}}{n\mu(\Oc)}\right)^\frac{1}{p-q}=\overline{\beta_1},
\end{equation*}
with $a_1$ and $c_2$ the constants depending only on $C$ and $\tau$ given in the proof of Lemma \ref{blu}. 
Thus, for $p\leq 0$ and $p<q$, the sequence $\{b(\A_j)\}_{j\in \mathbb{N}}$ is uniformly bounded from below by a positive constant and also uniformly bounded from above.   In particular, we have the following result.
\begin{corollary}\label{bjb}
Let $p,q\in\mathbb{R}$ be such that $p\leq 0$ and $p<q$. For all $t>\overline{\beta_1}$ and $j \in \mathbb{N}$, 
\begin{equation*}
\A_j \cap C_t \neq \emptyset.
\end{equation*}
\end{corollary}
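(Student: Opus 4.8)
The plan is to deduce the corollary directly from the uniform upper bound $b(\A_j)\le\overline{\beta_1}$ obtained just above, using only the definitions of $b(\cdot)$ and $C_t$. The essential observation is that $b(\A_j)$ is realized by a genuine point of $\A_j$, and that any point of $\A_j$ of norm strictly below $t$ is automatically trapped inside $C_t$; since $\overline{\beta_1}$ does not depend on $j$, this will hold for every $j$ with one and the same threshold $t$.

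First I would note that, because each $\A_j$ is a closed subset of $C$, the minimum in $b(\A_j)=\min\{r>0: rB_n\cap\A_j\neq\emptyset\}$ is attained by the nearest point of $\A_j$ to the origin; that is, there exists $y_j\in\A_j$ with $|y_j|=b(\A_j)$. In particular $y_j\in C$ and, by the established bound, $|y_j|=b(\A_j)\le\overline{\beta_1}$.

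Next I would use that $\xi\in\Omega_C\subset S^{n-1}$ is a unit vector, so that the Cauchy--Schwarz inequality gives $y_j\cdot\xi\le|y_j|\,|\xi|=|y_j|\le\overline{\beta_1}<t$ whenever $t>\overline{\beta_1}$. Hence $y_j\in H^-(\xi,t)$, and combining this with $y_j\in C$ yields $y_j\in C\cap H^-(\xi,t)=C_t$. Therefore $y_j\in\A_j\cap C_t$, so the intersection is nonempty for every $j\in\mathbb{N}$ and every $t>\overline{\beta_1}$.

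There is essentially no obstacle here: the only things to verify are the attainment of the minimum defining $b(\A_j)$ (immediate from the closedness of $\A_j$) and the inclusion $y_j\in C$ (immediate from $\A_j\subset C$). The whole content of the statement has already been front-loaded into the preceding uniform estimate $\overline{\beta_0}\le b(\A_j)\le\overline{\beta_1}$, which is precisely where the hypotheses $p\le 0$ and $p<q$ are used through Lemma~\ref{blu}; the corollary merely translates that norm bound into the asserted geometric intersection property.
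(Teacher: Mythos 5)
Your proof is correct and is essentially the paper's own argument: the corollary is stated there as an immediate consequence (``In particular\dots'') of the uniform estimate $\overline{\beta_0}\le b(\A_j)\le\overline{\beta_1}$, and your deduction---take the nearest point $y_j\in\A_j$ with $|y_j|=b(\A_j)\le\overline{\beta_1}$, then $y_j\cdot\xi\le|y_j|<t$ so $y_j\in C\cap H^-(\xi,t)=C_t$---is exactly the implicit step the authors leave to the reader. No gaps; the hypotheses $p\le 0$, $p<q$ enter only through Lemma \ref{blu}, as you note.
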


We now prove Theorem \ref{Cpq-exis} by using the approximation method  in \cite{LYZ22, Sch21}. We will keep our proof brief, and more details for the approximation method can be found in \cite{LYZ22, Sch21}. 

\vskip 2mm 
{\noindent \em Proof of Theorem \ref{Cpq-exis}.} 
Let $p,q\in\mathbb{R}$ be such that $p\leq 0$ and $p<q$ and let $\{\omega_j\}_{j \in \mathbb{N}}$ be given as above. Let $\overline{\beta_1}<t_1<t_2<\cdots< t_k< \cdots $ be an increasing sequence such that $\lim_{k\rightarrow \infty} t_k =+\infty.$ From Corollary \ref{bjb}, one has $$\emptyset \neq \A_j \cap C_{t_k} \subset C_{t_k}$$ for each $j \in \mathbb{N}$ and $k \in \mathbb{N}$. Now we can use \cite[Lemma 7.2]{LYZ22}, which was originally given in \cite[Theorem 5]{Sch18} (see also the proof of \cite[Theorem 1]{Sch21}). That is, given $k \in \mathbb{N}$, without loss of generality, we can assume that 
\begin{equation*}
\A_j \cap C_{t_k} \rightarrow M_k\,\,\, \text{as}\,\,\, j \rightarrow \infty,
\end{equation*} 
in the Hausdorff metric, where  $M_k$ is a compact convex set, and hence $h(\A_j \cap C_{t_k}, \cdot) \rightarrow h(M_K , \cdot)$ uniformly on $S^{n-1}$,
where $h(K, u)=\max\{ x\cdot u: x \in K\}$ denotes the support function of a compact convex set $K$.
Note that $M_l=M_k\cap C_{t_l}$ for $1\leq l < k$. Now let $\A=\cup_{k \in \mathbb{N}} M_k$. It follows from Lemma \ref{blu} that $o\notin\A$ and $\A$ is a closed convex set such that $\A\cap C_{t_k}=M_k$ for $k\in \mathbb{N}$.  
  
Next we show that, for $p\leq 0$ and $p<q$,  $\mu=\widetilde{C}_{p, q}(\A, \cdot)$ on $\Omega_{C^\circ}$.  
For each $\omega \in \mathcal{O}$, let $j_0 \in \mathbb{N}$ be the smallest number such that $\omega \subset \omega_{j_0}$. Thus, there exists $k_0 \in \mathbb{N}$ such that $\nu^{-1}_{\A}(\omega)=\nu^{-1}_{M_k}(\omega)$ for all $k \geq k_0$ (see e.g., \cite[Lemma 7.2 (iv)]{LYZ22}). On the compact set $\overline{\omega_{j_0}}$, $h(\A_j \cap C_{t_k}, \cdot)$ and $h(M_K , \cdot)$ are continuous negative functions such that $h(\A_j \cap C_{t_k}, \cdot) \rightarrow h(M_K , \cdot)$ uniformly.  Lemma \ref{L56} yields that $\widehat{\A}_j \rightarrow \widehat{\A}$ as $j \rightarrow \infty$, where 
\begin{equation}\label{AjA}
\widehat{\A}_j=[C, \overline{\omega_{j_0}}, h(\A_j \cap C_{t_k}, \cdot)] \in \mathcal{K}(C, \overline{\omega_{j_0}})\,\,\,\text{and}\,\,\,
\widehat{\A}=[C, \overline{\omega_{j_0}}, h(M_k, \cdot)] \in \mathcal{K}(C, \overline{\omega_{j_0}}).
\end{equation} Therefore, 
$h_C(\widehat{\A}_j, \cdot)\rightarrow h_C(\widehat{\A}, \cdot)$ uniformly on $\overline{\omega_{j_0}}$. 

For $q\neq 0$, it has been proved in \cite[Lemma 5.3]{LYZ22} that $\widetilde{C}_q(\widehat{\A}_j, \cdot) \rightarrow \widetilde{C}_q(\widehat{\A}, \cdot)$ weakly  as $j \rightarrow \infty$. Such a weak convergence also holds for $q=0$, namely $J^*(\widehat{\A}_j, \cdot) \to J^*(\widehat{\A}, \cdot)$ weakly as $j \to \infty$, where $J^*(\A, \cdot)=\widetilde C_0(\A, \cdot)$. Indeed, for any continuous and bounded function $f: \Oc\to \mathbb{R}$, by \cite[Lemma 4.2]{LYZ22}, the dominated convergence theorem and \eqref{Borel&q0}, one can obtain that
\begin{equation*}
\lim_{j \to \infty} \int_{\Oc} f(u)dJ^*(\widehat{\A}_j, u)=\lim_{j \to \infty} \int_{\Omega_{C}} f(\alpha_{\widehat{\A}_j}(v))dv=\int_{\Omega_{C}} f(\alpha_{\widehat{\A}}(v))dv=\int_{\Oc} f(u)dJ^*(\widehat{\A}, u).
\end{equation*} 

Combining with \cite[Theorem 4.5.1]{Ash72}, it follows that for an open set $\omega \subset \omega_{j_0}$,  
\begin{align}\label{wco}
\widetilde{C}_{p, q}(\A, \omega)\notag
&= \int_{\omega} (-h_C(\A , u))^{-p} d\widetilde{C}_q(\A, u) \\
&=\int_{\omega} (-h_C(\widehat{\A} , u))^{-p} d\widetilde{C}_q(\widehat{\A}, u) \notag \\
&\leq  \liminf_{j \rightarrow \infty} \int_{\omega} (-h_C(\widehat{\A}_j , u))^{-p} d\widetilde{C}_q(\widehat{\A}_j, u)  \notag \\
&=\liminf_{j \rightarrow \infty} \int_{\omega} (-h_C(\A_j, u))^{-p} d\widetilde{C}_q(\A_j , u) \notag  \\
&=\liminf_{j \rightarrow \infty} \widetilde{C}_{p, q}(\A_j, \omega)
=\liminf_{j \rightarrow \infty} \mu_j(\omega)=\mu({\omega}).
\end{align} 

Let $\beta \subset \omega_{j_0}$ be a closed set, and   $\{\beta_l\}_{l \in \mathbb{N}}$ be a sequence of open neighbourhoods of $\beta$ such that $\beta\subset \beta_{l_1} \subset \beta_{l_2} \subset \omega_{j_0}$ for any $l_1>l_2$ and $\beta=\cap_{l\geq 1} \beta_l.$ 
It follows from (\ref{wco}) that    $\widetilde{C}_{p ,q}(\A, \beta_l) \leq \mu(\beta_l)$ and then 
$\widetilde{C}_{p ,q}(\A, \beta) \leq \mu(\beta)$. Similarly,   \cite[Theorem 4.5.1]{Ash72} also implies that  
\begin{align*}
\widetilde{C}_{p, q}(\A, \beta)\notag
&= \int_{\beta} (-h_C(\A , u))^{-p} d\widetilde{C}_q(\A, u) 
  \notag \\
&=\int_{\beta} (-h_C(\widehat{\A} , u))^{-p} d\widetilde{C}_q(\widehat{\A}, u)\notag \\
&\geq  \limsup_{j \rightarrow \infty} \int_{\beta} (-h_C(\widehat{\A}_j , u))^{-p} d\widetilde{C}_q(\widehat{\A}_j, u) \notag \\
&=\limsup_{j \rightarrow \infty} \int_{\beta} (-h_C(\A_j, u))^{-p} d\widetilde{C}_q(\A_j , u) \notag  \\
&=\limsup_{j \rightarrow \infty} \widetilde{C}_{p, q}(\A_j, \beta)=\limsup_{j \rightarrow \infty}\mu_j(\beta)=\mu({\beta}).
\end{align*} 
This, together with \eqref{wco}, yields that 
$\widetilde{C}_{p, q}(\A, \beta)=\mu({\beta})$ for any closed set $\beta \subset \omega_{j_0}$. This indeed further implies 
$\widetilde{C}_{p ,q}(\A, \beta)=\mu({\beta})$ for any closed set $\beta \subset \Omega_{C^\circ}$. 
 Consequently, for $p\leq 0$ and $p<q$,   
$\widetilde{C}_{p, q}(\A, \cdot)=\mu$ on $\Omega_{C^\circ}$. 

Finally, for $p\leq 0$ and $p<q$, 
we can check that  $\A$ is $(C, p, q)$-close. This is an easy consequence of  \eqref{Cpqclose}, Definition \ref{pqdual} and the fact that
$\mu$ is a finite Borel measure, namely, 
\begin{equation*}
\widetilde{C}_{p, q}(\A, \Omega_{C^\circ})=\mu(\Omega_{C^\circ})<\infty.
\end{equation*}
The proof is completed.\qed

When $q=n$, the $L_p$ dual Minkowski problem reduces to the $L_p$ Minkowski problem  \cite{Sch18, YYZ22} up to a multiplication of constant $\frac{1}{n}$. This problem can be described as follows: 

\vskip 2mm \noindent \textbf{The $L_p$ Minkowski problem for $C$-compatible sets:} {\em Given a nonzero finite Borel measure $\mu$ defined on $\Omega_{C^\circ}$ and a real number $p$, under what conditions does there exist a $C$-compatible set $\A\in \Ln$ such that $\mu=S_{n-1, p}(\A, \cdot)$?} Here $S_{n-1, p}(\A, \cdot)$ is the $L_p$ surface area measure of $\A$ which can be defined similar to the one in \eqref{Lp-measure-support}. 

With the help of Theorem \ref{Cpq-exis}, the subsequent corollary can be obtained. The case for  $p=0, 1$ can be seen in \cite{Sch18, Sch21}, and for $p \in (0, 1)$ is considered in \cite{AYZ}.
 
\begin{corollary}
Let $p< 0$ and let $\mu$ be a nonzero finite Borel measure defined on $\Oc$. There exists a $(C, p, n)$-close set such that $\mu=S_{n-1, p}(\A, \cdot)$.
\end{corollary}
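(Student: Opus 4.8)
The plan is to reduce the corollary to Theorem~\ref{Cpq-exis} by exploiting the identification between the $L_p$ surface area measure and the $(p,q)$-th dual curvature measure at the distinguished value $q=n$. The key observation is that, for a $C$-compatible set $\A$, the dual volume $\widetilde{V}_n(\A)$ coincides with $V_n(C\,\backslash\A)$, and correspondingly the $n$-th dual curvature measure $\widetilde{C}_n(\A,\cdot)$ agrees (up to the normalizing factor $\tfrac{1}{n}$) with the surface area measure $S_{n-1}(\A,\cdot)$. Indeed, comparing \eqref{Lp-measure-support}, which gives $dS_{n-1,p}(\A,\cdot)=(-h_C(\A,\cdot))^{1-p}\,dS_{n-1}(\A,\cdot)$, with Definition~\ref{pqdual}, which gives $d\widetilde{C}_{p,n}(\A,\cdot)=(-h_C(\A,\cdot))^{-p}\,d\widetilde{C}_n(\A,\cdot)$, one sees that the two measures differ only through the relation between $d\widetilde{C}_n(\A,\cdot)$ and $(-h_C(\A,\cdot))\,dS_{n-1}(\A,\cdot)$. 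I would first record this precise proportionality as the central identity of the proof.

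Concretely, the first step is to verify that for every $C$-compatible set $\A$ one has
\begin{equation*}
d\widetilde{C}_n(\A,\cdot)=\tfrac{1}{n}(-h_C(\A,\cdot))\,dS_{n-1}(\A,\cdot)
\end{equation*}
as measures on $\Omega_{C^\circ}$. This follows from the integral formula \eqref{borel&qL} with $p=0$ and $q=n$, upon making the change of variables from $\partial\A\cap(\mathrm{int}\,C)$ to $\Omega_{C^\circ}$ via the Gauss map: the factor $[-(x\cdot\nu_\A(x))]^{1-0}\,|x|^{n-n}=-(x\cdot\nu_\A(x))$ reduces, under $\nu_\A$, precisely to $-h_C(\A,\cdot)$ integrated against the surface area measure, since $x\cdot\nu_\A(x)=h_C(\A,\nu_\A(x))$ on the boundary. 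Combining this with Definition~\ref{pqdual} at $q=n$ gives
\begin{equation*}
d\widetilde{C}_{p,n}(\A,\cdot)=(-h_C(\A,\cdot))^{-p}\,d\widetilde{C}_n(\A,\cdot)=\tfrac{1}{n}(-h_C(\A,\cdot))^{1-p}\,dS_{n-1}(\A,\cdot)=\tfrac{1}{n}\,dS_{n-1,p}(\A,\cdot),
\end{equation*}
where the last equality is exactly \eqref{Lp-measure-support}. Thus $\widetilde{C}_{p,n}(\A,\cdot)=\tfrac{1}{n}S_{n-1,p}(\A,\cdot)$.

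The second step is the reduction itself. Given the target measure $\mu$, apply Theorem~\ref{Cpq-exis} with $q=n$ to the measure $n\mu$: since $p<0$ (so in particular $p\le 0$) and $p<0<n=q$, the hypotheses $p\le 0$ and $p<q$ are satisfied, and Theorem~\ref{Cpq-exis} produces a $(C,p,n)$-close set $\A$ with $\widetilde{C}_{p,n}(\A,\cdot)=n\mu$. By the central identity above, this reads $\tfrac{1}{n}S_{n-1,p}(\A,\cdot)=n\mu$; rescaling $\A$ by a suitable positive constant, using the homogeneity of $S_{n-1,p}$ (which mirrors the homogeneity \eqref{homCpq} of $\widetilde{C}_{p,q}$ with degree $n-p$), yields a $C$-compatible set whose $L_p$ surface area measure equals $\mu$ exactly. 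Finiteness of $S_{n-1,p}(\A,\cdot)$, i.e. the $(C,p,n)$-closeness, is inherited directly from the $(C,p,n)$-closeness guaranteed by Theorem~\ref{Cpq-exis}.

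The main obstacle I anticipate is not conceptual but bookkeeping: pinning down the exact normalizing constants and the homogeneity degree so that the rescaling in the final step lands on $\mu$ rather than a scalar multiple of it. The identity $\widetilde{C}_{p,n}=\tfrac1n S_{n-1,p}$ must be established with the correct factor of $\tfrac1n$, and the scaling relation $S_{n-1,p}(\lambda\A,\cdot)=\lambda^{n-p}S_{n-1,p}(\A,\cdot)$ must be applied with the right exponent $n-p$; since $p<0<n$ this exponent is positive and nonzero, so the rescaling is always solvable. Once these constants are tracked carefully, the corollary follows immediately, and the remaining verification—that the set produced is genuinely $(C,p,n)$-close—is automatic from $\mu(\Omega_{C^\circ})<\infty$.
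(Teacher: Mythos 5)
Your proposal is correct and follows essentially the route the paper intends: the identity $\widetilde{C}_{p,n}(\A,\cdot)=\tfrac{1}{n}S_{n-1,p}(\A,\cdot)$ (obtained from \eqref{borel&qL} at $q=n$ together with Definition \ref{pqdual} and \eqref{Lp-measure-support}), followed by an application of Theorem \ref{Cpq-exis} with $q=n$, which is exactly the reduction ``up to a multiplication of constant $\tfrac{1}{n}$'' stated in the paper. Your detour through $n\mu$ and a subsequent rescaling via the homogeneity of degree $n-p$ is valid but unnecessary; applying Theorem \ref{Cpq-exis} directly to the measure $\tfrac{1}{n}\mu$ lands on $\mu=S_{n-1,p}(\A,\cdot)$ at once.
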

 
When $q=0$, $\widetilde C_{p, q}(\A \cdot)=J^*_p(\A, \cdot)$ for $\A \in \Ln$ and the $L_p$ Alexandrov integral curvature measure of $\A$ is given by $$J_p(\A, \cdot)=J^*_p(\A^\diamond_C, \cdot).$$ A direct consequence of Theorem \ref{Cpq-exis} for  $q=0$ and $p<0$ yields the following existence result to the $L_p$ Alexandrov problem for $p<0$ in a more general setting. Its proof follows along the same pattern as that of Theorem \ref{Lp_Alex}.
\begin{theorem}
Let $0>p \in \mathbb{R}$, and $\nu$ be a nonzero finite Borel measure defined on $\Omega_C$. Then there exists a $C$-compatible set $\A\subset C$ such that $\A_{C}^{\diamond}$ is a $(C^\circ, p, 0)$-close set and  $J_p(\A,\cdot)=\nu$. 
\end{theorem}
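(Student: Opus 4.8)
The plan is to closely follow the proof of Theorem~\ref{Lp_Alex}, with two adjustments: the compactly supported existence result Theorem~\ref{cd1} is replaced by the general finite-measure result Theorem~\ref{Cpq-exis}, and the problem is transported to the polar cone $C^\circ$ through copolarity. The guiding identity is the very definition $J_p(\A, \cdot) = J_p^*(\copolar, \cdot) = \widetilde{C}_{p, 0}(\copolar, \cdot)$, in which the copolar $\copolar \subset C^\circ$ is viewed as a $C^\circ$-compatible set and the dual curvature measure is computed with underlying cone $C^\circ$. Solving $J_p(\A, \cdot) = \nu$ therefore reduces to producing a $C^\circ$-compatible set whose $(p, 0)$-th dual curvature measure is $\nu$, and then copolarizing back.

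First I would record the structural facts that $C^\circ$ is again a pointed closed convex cone with nonempty interior and that $(C^\circ)^\circ = C$, so $\Omega_{(C^\circ)^\circ} = \Omega_C$. Hence $\nu$, being a nonzero finite Borel measure on $\Omega_C$, is exactly a measure on the sphere-slice of $(C^\circ)^\circ$ associated to the cone $C^\circ$, which is the datum Theorem~\ref{Cpq-exis} consumes. Since $0 > p$, we have both $p \leq 0$ and $p < 0 = q$, so the hypotheses of Theorem~\ref{Cpq-exis} are met with $q = 0$. Applying that theorem to the cone $C^\circ$, the exponents $p$ and $q = 0$, and the measure $\nu$ yields a $(C^\circ, p, 0)$-close set $\mathbb{B} \subset C^\circ$ with $\nu = \widetilde{C}_{p, 0}(\mathbb{B}, \cdot) = J_p^*(\mathbb{B}, \cdot)$.

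Finally I would set $\A = \mathbb{B}^{\diamond}_{C^\circ} \subset C$. By the bipolar theorem of Section~\ref{section:2}, applied to the $C^\circ$-compatible set $\mathbb{B}$, the set $\A$ is $C$-compatible and satisfies $\copolar = \mathbb{B}$. Consequently
\[
J_p(\A, \cdot) = J_p^*(\copolar, \cdot) = J_p^*(\mathbb{B}, \cdot) = \nu,
\]
while $\copolar = \mathbb{B}$ is a $(C^\circ, p, 0)$-close set by its construction, which is precisely the assertion. I do not expect any analytic difficulty here, since all the existence work is delegated to Theorem~\ref{Cpq-exis}; the only points demanding care are the standard structural properties of $C^\circ$ and keeping straight which cone underlies each copolar operation and each dual curvature measure, so that the exponent pair $(p, 0)$ and the measure spaces $\Omega_C$ and $\Omega_{C^\circ}$ are matched correctly.
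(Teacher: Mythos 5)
Your proposal is correct and coincides with the paper's own proof: it applies Theorem \ref{Cpq-exis} with cone $C^\circ$, exponents $p<0$ and $q=0$, to obtain a $(C^\circ, p, 0)$-close set $\mathbb{B}\subset C^\circ$ with $\nu=J_p^*(\mathbb{B},\cdot)$, then sets $\A=\mathbb{B}^\diamond_{C^\circ}$ and uses the bipolar theorem to get $\copolar=\mathbb{B}$ and $J_p(\A,\cdot)=\nu$. Your extra verifications (that $C^\circ$ is a pointed closed convex cone, $(C^\circ)^\circ=C$, and the hypothesis $p\leq 0$, $p<q$ holds) are left implicit in the paper but are exactly the right points to check.
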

\begin{proof}
For $q=0$, applying Theorem \ref{Cpq-exis} to measure $\nu$ and cone $C^\circ$, one can find a $(C^\circ, p, 0)$-close set $\B\subset C^{\circ}$ such that $\nu=J^*_p(\B, \cdot)$ for $p<0$. Let $\A=\B^\diamond_{C^\circ} \subset C$, and hence $\A$ is $C$-compatible and $\B=\A_C^\diamond$. Therefore, for $p<0$, one has
$$J_p(\A, \cdot)=J_p^*(\copolar, \cdot)=J_p^*(\mathbb{B}, \cdot) = \nu,$$ as desired. This completes the proof.
\end{proof}
 
\section{Uniqueness of solutions to the \texorpdfstring{$L_p$}{} dual Minkowski problem for \texorpdfstring{$C$}{}-compatible sets} \label{sec5}

The main objective of this section is to provide some uniqueness results regarding the solutions to the $L_p$ dual Minkowski problem  for $C$-compatible sets. 

Recall that, when $p=0$, the $L_p$ dual Minkowski problem becomes the dual Minkowski problem raised by Li, Ye and Zhu in \cite{LYZ22}, where they established the existence of solutions to the dual Minkowski problem for the case of $(C, q)$-close sets when $q>0$. However, the uniqueness was not discussed, and it will be our first problem of interest in this section. To fulfill this goal, we shall need the following lemma motivated by Zhao \cite{Zhao17}.

\begin{lemma}\label{Gam123}
Let $\B_1$ and $\B_2$ be two $(C, q)$-close sets for $q>0$. Suppose that  
\begin{align*}
&\eta_1=\left\{  u \in \Omega_{C^\circ} : h_C(\B_1, u)>h_C(\B_2, u)\right\} \neq \emptyset, \\ 
& \eta_2=\left\{  u \in \Omega_{C^\circ} : h_C(\B_1, u)<h_C(\B_2, u)\right\}\neq \emptyset, \\
&\eta_3=\left\{  u \in \Omega_{C^\circ} : h_C(\B_1, u)=h_C(\B_2, u)\right\}\neq \emptyset.
\end{align*} 
Then, the following hold:
\begin{itemize}
\item[(a)] $\rho_C({\B}_1, v)> \rho_C({\B_2}, v)$ if $v \in {{\pmb{\alpha}}^{\ast}_{\B_2} (\eta_2)}$ and $\rho_C(\B_1, v) \leq \rho_C({\B_2}, v)$
if $v \in {{\pmb{\alpha}}^{\ast}_{\B_1} (\eta_1 \cup \eta_3)}$;
\item[(b)] ${{\pmb{\alpha}}^{\ast}_{\B_2} (\eta_2)} \subset {{\pmb{\alpha}}^{\ast}_{\B_1} (\eta_2)}$;
\item[(c)]  $\mathcal{H}^{n-1} ({{\pmb{\alpha}}^{\ast}_{\B_1} (\eta_2)} )>0$.
\end{itemize}
\end{lemma}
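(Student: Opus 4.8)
The plan is to prove all three items directly from the defining relations of the radial and reverse radial Gauss maps, in particular the equivalence \eqref{singleton}, together with two elementary facts: for $u\in\Omega_{C^\circ}=S^{n-1}\cap\operatorname{int}C^\circ$ one has $v\cdot u<0$ for every $v\in C\setminus\{o\}$, and a ray $\{tv:t>0\}$ with $v\in\Omega_C$ meets $\partial\B_i$ in the single point $r_{\B_i}(v)=\rho_C(\B_i,v)v$, so that $tv\in C\setminus\B_i$ for $t<\rho_C(\B_i,v)$ and $tv\in\operatorname{int}\B_i$ for $t>\rho_C(\B_i,v)$.

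For (a), first take $v\in\pmb{\alpha}^*_{\B_2}(\eta_2)$; by \eqref{singleton} there is $u\in\eta_2$ with $\rho_C(\B_2,v)(v\cdot u)=h_C(\B_2,u)$. Since $h_C(\B_1,u)<h_C(\B_2,u)$ and $\B_1\subseteq H^-(u,h_C(\B_1,u))$, the point $r_{\B_2}(v)$ strictly violates the $u$-constraint of $\B_1$; using $v\cdot u<0$ I would check that $tv\in C\setminus\B_1$ for all $t$ up to $t^*=h_C(\B_1,u)/(v\cdot u)>\rho_C(\B_2,v)$, giving $\rho_C(\B_1,v)\geq t^*>\rho_C(\B_2,v)$. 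For the second assertion, take $v\in\pmb{\alpha}^*_{\B_1}(\eta_1\cup\eta_3)$, so $r_{\B_1}(v)\cdot u=h_C(\B_1,u)\geq h_C(\B_2,u)$ for the associated $u$; then $r_{\B_1}(v)\notin\operatorname{int}\B_2$, and the single-crossing property yields $\rho_C(\B_1,v)\leq\rho_C(\B_2,v)$.

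For (b), let $v\in\pmb{\alpha}^*_{\B_2}(\eta_2)$. Part (a) gives the strict inequality $\rho_C(\B_1,v)>\rho_C(\B_2,v)$, so $r_{\B_1}(v)$ lies strictly beyond $r_{\B_2}(v)$ on the ray and hence in $\operatorname{int}\B_2$. I would then pick an outer unit normal $u'\in\Omega_{C^\circ}$ of $\B_1$ at $r_{\B_1}(v)$; since $r_{\B_1}(v)\in\operatorname{int}\B_2$ one gets the strict inequality $h_C(\B_1,u')=r_{\B_1}(v)\cdot u'<h_C(\B_2,u')$, i.e.\ $u'\in\eta_2$, and \eqref{singleton} gives $v\in\pmb{\alpha}^*_{\B_1}(\{u'\})\subseteq\pmb{\alpha}^*_{\B_1}(\eta_2)$.

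For (c), I would argue by contradiction, assuming $\mathcal{H}^{n-1}(\pmb{\alpha}^*_{\B_1}(\eta_2))=0$. For $\mathcal{H}^{n-1}$-a.e.\ $v\in\Omega_C$ the boundary point $r_{\B_1}(v)$ is smooth with a unique normal in $\Omega_{C^\circ}=\eta_1\cup\eta_2\cup\eta_3$, so $\Omega_C$ is covered up to a null set by $\pmb{\alpha}^*_{\B_1}(\eta_1)\cup\pmb{\alpha}^*_{\B_1}(\eta_2)\cup\pmb{\alpha}^*_{\B_1}(\eta_3)$; the vanishing assumption then makes $\pmb{\alpha}^*_{\B_1}(\eta_1\cup\eta_3)$ co-null, whence by (a) $\rho_C(\B_1,v)\leq\rho_C(\B_2,v)$ for a.e.\ $v$ and, by continuity of radial functions, for all $v\in\Omega_C$. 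This forces $C\setminus\B_1\subseteq C\setminus\B_2$, i.e.\ $\B_2\subseteq\B_1$, so $h_C(\B_2,\cdot)\leq h_C(\B_1,\cdot)$ and $\eta_2=\emptyset$, a contradiction. The main obstacle is the boundary bookkeeping: making the single-crossing and strict-interiority statements rigorous (using convexity of $\B_i$ and $v\in\operatorname{int}C$), and in (b) guaranteeing that an outer normal can be chosen genuinely in $\Omega_{C^\circ}$ rather than in the exceptional set $\mathcal{N}_{\B_1}\subset\partial C^\circ\cap S^{n-1}$ — which is exactly where the interiority $r_{\B_1}(v)\in\operatorname{int}C$ is used; the a.e.\ covering in (c) likewise rests on a.e.\ differentiability of the convex boundary and on $\mathcal{N}_{\B_1}$ being $\mathcal{H}^{n-1}$-null.
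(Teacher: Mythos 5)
Your proposal is correct and follows essentially the same route as the paper's proof: part (a) rests on the identical support--radial inequality $\rho_C(\B_i,v)(v\cdot u)=h_C(\B_i,u)$ at a witness normal $u\in\eta_2$ (resp.\ $u\in\eta_1\cup\eta_3$) together with $v\cdot u<0$, merely phrased directly rather than by contradiction, and part (c) is verbatim the paper's argument (a.e.\ covering of $\Omega_C$ by $\pmb{\alpha}^*_{\B_1}(\eta_1\cup\eta_2\cup\eta_3)$, hence $\rho_C(\B_1,\cdot)\le\rho_C(\B_2,\cdot)$ a.e., then everywhere by continuity, giving $\B_2\subseteq\B_1$ and the contradiction $\eta_2=\emptyset$). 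The only deviation is cosmetic and occurs in (b), where you construct a normal $u'\in\eta_2$ at the point $r_{\B_1}(v)\in\operatorname{int}\B_2$ while the paper deduces the inclusion set-theoretically from the two halves of (a); both versions rest on the same unstated fact that $\pmb{\alpha}_{\B_1}(v)$ meets $\Omega_{C^\circ}$ rather than only the exceptional set $\mathcal{N}_{\B_1}$ (a subtlety you at least flag, though interiority of $r_{\B_1}(v)$ in $C$ by itself does not force the normal off $\partial C^\circ$), so your gloss is exactly at the paper's own level of rigor.
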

\begin{proof}
We prove the above arguments by contradiction. 

\vskip 2mm \noindent For (a), assume that there is some $v_0 \in{{\pmb{\alpha}}^{\ast}_{\B_2}(\eta_2)}$ such that $\rho_C({\B_1}, v_0) \leq \rho_C({\B_2}, v_0)$. Then, one can find a vector $u_0 \in \eta_2\subset \Oc$ such that 
$\rho_C({\B_2}, v_0)(u_0 \cdot v_0)=h_C(\B_2, u_0)$. As $\B_2$ is a $(C, q)$-close set, one has $o\notin   \B_2$, and  $h_C(\B_2, u_0)<0$ for $u_0\in \Oc.$ In particular, $u_0 \cdot v_0< 0$. This further gives 
$$h_C(\B_2, u_0)=\rho_C({\B_2}, v_0)(u_0 \cdot v_0)\leq  \rho_C({\B_1}, v_0)(u_0 \cdot v_0) \leq h_C(\B_1, u_0),$$
a contradiction with the definition of $\eta_2$. Therefore, $\rho_C({\B_1}, v)> \rho_C({\B_2}, v)$
if $v \in {{\pmb{\alpha}}^{\ast}_{\B_2} (\eta_2)}$. 

The same procedure can be used to obtain the second part of (a). In fact, if $\rho_C(\B_1, v_0') > \rho_C({\B_2}, v_0')$
for some $v_0' \in {{\pmb{\alpha}}^{\ast}_{\B_1} (\eta_1 \cup \eta_3)}$. Then a vector $u_0' \in \eta_1\cup \eta_3$ can be found so that $\rho_C(\B_1, v_0')(u_0'\cdot v_0')=h_C(\B_1, u_0')$ and again $u_0'\cdot v_0'<0$ due to $u_0' \in \Oc$. 
Therefore,
\begin{equation*}
h_C(\B_1, u_0')=\rho_C(\B_1, v_0')(u_0'\cdot v_0')<\rho_C(\B_2, v_0')(u_0'\cdot v_0')\leq h_C(\B_2, u_0'),
\end{equation*}
which contradicts with  $u_0'\in \eta_1\cup \eta_3$. Thus,  
$\rho_C(\B_1, v) \leq \rho_C({\B_2}, v)$
if $v \in {{\pmb{\alpha}}^{\ast}_{\B_1} (\eta_1 \cup \eta_3)}$.
 
\vskip 2mm \noindent 
For (b), assume that ${{\pmb{\alpha}}^{\ast}_{\B_2} (\eta_2)} \not\subset {{\pmb{\alpha}}^{\ast}_{\B_1} (\eta_2)}$, i.e., there is $v_0 \in {\alpha^{\ast}_{\B_2} (\eta_2)}$ but $v_0 \not\in  {{\pmb{\alpha}}^{\ast}_{\B_1} (\eta_2)}$. This implies $v_0 \in {{\pmb{\alpha}}^{\ast}_{\B_2} (\eta_2)} \cap {{\pmb{\alpha}}^{\ast}_{\B_1} (\eta_1 \cup\eta_3 )}$, which is not possible due to (a). 

\vskip 2mm \noindent For (c), assume that  
$\mathcal{H}^{n-1} ({{\pmb{\alpha}}^{\ast}_{\B_1} (\eta_2)} )=0$.
Then ${{\pmb{\alpha}}^{\ast}_{\B_1} (\eta_1 \cup \eta_3)}$ differs from $\Omega_{C}$ by a null set.
Together with (a), one has $\rho_C({\B_1}, v) \leq \rho_C({\B_2}, v)$ for $\mathcal{H}^{n-1}$-almost all $v\in \Omega_{C}$. As the radial functions $\rho_C({\B_1}, \cdot)$ and $\rho_C({\B_2}, \cdot)$ are both continuous on $\Omega _C$, one sees that  $\B_2 \subseteq \B_1$. This further yields  $h_C(\B_1, \cdot) \geq h_C(\B_2, \cdot)$, and thus $\eta_2=\emptyset$. This is a contradiction to the hypothesis, and therefore $\mathcal{H}^{n-1} ({{\pmb{\alpha}}^{\ast}_{\B_1} (\eta_2)})>0.$
\end{proof}
 
With the help of Lemma \ref{Gam123}, the following uniqueness result of the solutions to the dual Minkowski problem for $(C, q)$-close sets can be obtained.
\begin{theorem}\label{unicq}
Let $q>0$, and $\A_1$, $\A_2$ be two $(C, q)$-close sets.  If $\widetilde{C}_q (\A_1, \cdot)=\widetilde{C}_q (\A_2, \cdot)$, then $\A_1=\A_2$.
\end{theorem}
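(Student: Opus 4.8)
The plan is to argue by contradiction, following the scheme of Zhao \cite{Zhao17} with Lemma \ref{Gam123} as the main engine. Suppose $\A_1\neq \A_2$. Since a $C$-compatible set is recovered from its support function through \eqref{def-1-1-2} (equivalently, through the relation $h_C(E,u)\rho_{C^\circ}(E_C^\diamond,u)=-1$), the inequality $\A_1\neq \A_2$ forces $h_C(\A_1,\cdot)\neq h_C(\A_2,\cdot)$ on $\Omega_{C^\circ}$. Hence at least one of the sets $\eta_1,\eta_2$ in Lemma \ref{Gam123} is nonempty; as the hypothesis and conclusion are symmetric in $\A_1,\A_2$, after possibly interchanging their roles I may assume $\eta_2\neq\emptyset$, so that Lemma \ref{Gam123} applies.

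The core of the argument is a one-sided comparison of the two (equal) measures restricted to $\eta_2$. Writing both through \eqref{qdual},
\[
\widetilde{C}_q(\A_i,\eta_2)=\frac{1}{n}\int_{\pmb{\alpha}^*_{\A_i}(\eta_2)}\rho_C(\A_i,v)^q\,dv\qquad(i=1,2),
\]
I would bound $\widetilde{C}_q(\A_2,\eta_2)$ from above by $\widetilde{C}_q(\A_1,\eta_2)$ in two steps. First, part (a) gives $\rho_C(\A_2,v)<\rho_C(\A_1,v)$ on $\pmb{\alpha}^*_{\A_2}(\eta_2)$; since $q>0$ this upgrades to $\rho_C(\A_2,v)^q\le\rho_C(\A_1,v)^q$, whence
\[
\frac{1}{n}\int_{\pmb{\alpha}^*_{\A_2}(\eta_2)}\rho_C(\A_2,v)^q\,dv\le\frac{1}{n}\int_{\pmb{\alpha}^*_{\A_2}(\eta_2)}\rho_C(\A_1,v)^q\,dv.
\]
Second, the inclusion $\pmb{\alpha}^*_{\A_2}(\eta_2)\subseteq\pmb{\alpha}^*_{\A_1}(\eta_2)$ from part (b), together with $\rho_C(\A_1,\cdot)^q\ge 0$, enlarges the domain to give
\[
\frac{1}{n}\int_{\pmb{\alpha}^*_{\A_2}(\eta_2)}\rho_C(\A_1,v)^q\,dv\le\frac{1}{n}\int_{\pmb{\alpha}^*_{\A_1}(\eta_2)}\rho_C(\A_1,v)^q\,dv=\widetilde{C}_q(\A_1,\eta_2).
\]
Chaining these yields $\widetilde{C}_q(\A_2,\eta_2)\le\widetilde{C}_q(\A_1,\eta_2)$.

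The decisive point is strictness, which is exactly where part (c) enters. Since $\pmb{\alpha}^*_{\A_1}(\eta_2)$ is the disjoint union of $\pmb{\alpha}^*_{\A_2}(\eta_2)$ and the remainder $\pmb{\alpha}^*_{\A_1}(\eta_2)\setminus\pmb{\alpha}^*_{\A_2}(\eta_2)$, and part (c) guarantees $\mathcal{H}^{n-1}(\pmb{\alpha}^*_{\A_1}(\eta_2))>0$, at least one of these two pieces has positive measure. If the first one does, the first inequality above is strict (a strict pointwise comparison over a positive-measure set); if the second one does, the second inequality is strict (because $\rho_C(\A_1,\cdot)>0$ on the effective radial domain). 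In either case $\widetilde{C}_q(\A_2,\eta_2)<\widetilde{C}_q(\A_1,\eta_2)$, contradicting $\widetilde{C}_q(\A_1,\cdot)=\widetilde{C}_q(\A_2,\cdot)$. Therefore $\A_1=\A_2$.

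I expect the main obstacle to be the strictness bookkeeping rather than the inequality chain itself: one must verify that the pointwise comparison and the domain-enlargement step combine into a genuinely strict inequality, which is precisely what part (c) is engineered to secure. A secondary technical matter is to run all integral identities on the effective radial domain $\Omega_C^e$, discarding the spherical-null sets where $\rho_C$ or $\pmb{\alpha}_{\A}$ may misbehave, and to use the hypothesis $q>0$ exactly where it is needed, namely to preserve the direction $\rho_C(\A_2,v)<\rho_C(\A_1,v)\Rightarrow\rho_C(\A_2,v)^q<\rho_C(\A_1,v)^q$; for $q<0$ this monotonicity reverses, which is why the restriction $q>0$ cannot be dropped here.
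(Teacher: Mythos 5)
Your proof is correct, but it is not the paper's argument: it is a streamlined, one-pass version of it. The paper first uses the homogeneity \eqref{homCpq} to reduce to the case where $\A_1,\A_2$ are not dilates of each other, and then rescales, choosing $\lambda>0$ so that $\B_1=\lambda\A_1$ and $\B_2=\A_2$ make \emph{all three} sets $\eta_1,\eta_2,\eta_3$ nonempty --- precisely because Lemma \ref{Gam123} is stated with all three nonemptiness hypotheses. The same integral chain you wrote then yields $\widetilde{C}_q(\A_1,\eta_2)<\lambda^q\,\widetilde{C}_q(\A_1,\eta_2)$ with $\widetilde{C}_q(\A_1,\eta_2)>0$, hence $\lambda>1$; switching the roles of $\A_1$ and $\A_2$ gives $\lambda<1$, and the contradiction comes from this double pass. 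You instead work with the unscaled pair, securing only $\eta_2\neq\emptyset$ (after a swap, legitimate since the hypothesis is symmetric and \eqref{def-1-1-2} shows distinct $C$-compatible sets have distinct support functions), and you close the argument in a single pass by making the strictness explicit through the decomposition of $\pmb{\alpha}^*_{\A_1}(\eta_2)$ into $\pmb{\alpha}^*_{\A_2}(\eta_2)$ and its complement. Your strictness bookkeeping is sound: a pointwise strict comparison on a positive-measure set in the first step, or a strictly positive integrand (bounded below via \eqref{hb}) over a positive-measure remainder in the second, with part (c) guaranteeing that one of the two occurs; the finiteness needed to convert pointwise strictness into strict inequality of integrals is supplied by the $(C,q)$-closeness hypothesis.

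The one point you must patch is the sentence ``so that Lemma \ref{Gam123} applies'': as stated, the lemma also assumes $\eta_1\neq\emptyset$ and $\eta_3\neq\emptyset$, and you never verify these --- indeed they can fail for the unscaled pair (for instance $\eta_1=\eta_3=\emptyset$ when $h_C(\A_1,\cdot)<h_C(\A_2,\cdot)$ throughout $\Omega_{C^\circ}$). This is repairable in either of two ways: (i) observe, by inspecting the lemma's proof, that the three conclusions you actually use --- the first half of (a), then (b), then (c) --- require only $\eta_2\neq\emptyset$ (the assertions involving $\eta_1\cup\eta_3$ are vacuous when that union is empty, and the contradiction concluding the proof of (c) is with $\eta_2\neq\emptyset$ alone); or (ii) adopt the paper's dilation step to manufacture the full hypotheses, at the cost of reinstating the $\lambda>1$ versus $\lambda<1$ double argument. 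With fix (i) your proof stands essentially as written and is, if anything, shorter than the paper's.
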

\begin{proof} Let $q>0$. As the measure  $\widetilde{C}_q(\cdot, \cdot)$ is homogeneous of degree $q$, it is enough to prove the argument by assuming that $\A_1$ and $\A_2$ are not dilates of each other. Consequently, there exists $\lambda>0$ such that $\B_1=\lambda \A_1$ and $\B_2=\A_2$ satisfy the assumption in Lemma \ref{Gam123}. It follows from \eqref{qdual} and part (c) of Lemma \ref{Gam123} that $\widetilde{C}_q(\B_1, \eta_2)>0$. As $\widetilde{C}_q(\B_1, \cdot)=\lambda^q\widetilde{C}_q(\A_1, \cdot)$ and $\widetilde{C}_q (\A_1, \cdot)=\widetilde{C}_q (\A_2, \cdot)$, one also has $$\widetilde{C}_q({\A}_1, \eta_2)=\widetilde{C}_q({\A}_2, \eta_2)>0.$$ 
Together with \eqref{qdual} and Lemma \ref{Gam123}, one gets, for $q>0$, 
\begin{align*}
\widetilde{C}_q({\A}_1, \eta_2)
&=\widetilde{C}_q({\A}_2, \eta_2)\\ &=\frac{1}{n} \int_{{\pmb{\alpha}}^{\ast}_{\A_2} (\eta_2)}   \rho_C(\A_2, v) ^q dv
\\ &<\frac{1}{n} \int_{{\pmb{\alpha}}^{\ast}_{{\A}_2} (\eta_2)}  \rho_C(\B_1, v)^q dv\\
& \leq  \frac{1}{n} \int_{{\pmb{\alpha}}^{\ast}_{\B_1} (\eta_2)}  \rho_C(\B_1, v)^q dv\\ &=\widetilde{C}_q(\B_1, \eta_2)=\lambda^q \widetilde{C}_q({\A}_1, \eta_2).
\end{align*} As $\widetilde{C}_q({\A}_1, \eta_2)>0,$ one must have  $\lambda^q >1$ and hence $\lambda >1$. Following the same lines as above, if we let $\B_2=\lambda \A_1$ and $\B_1=\A_2$, one should get $\lambda <1$, which is impossible. This shows that $\A_1=\A_2$ as desired. 
\end{proof}

We now consider the uniqueness of solutions to the $L_p$ dual Minkowski problem when the given measure is  concentrated on finite many directions. This is related to $C$-polytopes, which are $C$-determined sets by finite many unit vectors in $\Oc.$ That is,  $\mathbb{P}\subset C$ is said to be a $C$-polytope if there exist  $u_1, u_2,..., u_m\in \Oc$, such that, $\mathbb{P}\in \mathcal{K}(C, \{u_1, u_2,..., u_m\}).$ Moreover, by \eqref{Cdeter},  
\begin{equation*}
\mathbb{P}=C \cap \bigcap_{i=1}^mH ^{-} \left( u_i, h_C(\mathbb{P}, u_i)\right).
\end{equation*} 
The set $F_i=\mathbb{P} \cap H  \left( u_i, h_C(\mathbb{P}, u_i)\right)$ defines the facet of $\mathbb{P}$ with unit outer normal $u_i\in \Oc$. Let 
\begin{align}\label{ci}
\Delta_{\mathbb{P},i}&=\{tx\in C: t>0 \ \ \mathrm{and}\ \ x\in F_i\}  \ \ \mathrm{and} \ \ 
c_i=\frac{1}{n}\left[-h_C(\mathbb{P}, u_i)\right]^{-p} \int_{\Omega_{C} \cap \Delta_{\mathbb{P},i}}\rho_C(\mathbb{P}, v)^qdv. 
\end{align} 
It can be checked that 
\begin{equation}\label{dis&cpq}
\widetilde{C}_{p, q}(\mathbb{P}, \cdot)=\sum_{i=1}^{m} c_i \delta_{u_i},
\end{equation}
where $\delta_{u_i}$ represents the Dirac measure concentrated at $u_i$.  Indeed, \eqref{dis&cpq} is an easy consequence of the combination with Proposition \ref{integral}, \begin{align*} 
\pmb{\alpha}^*_{\mathbb{P}}(u_i)&=\Omega_{C} \cap \Delta_{\mathbb{P},i}  \ \ \mathrm{up\ to \ a \ set \ of \ spherical \ Lebesgue \ measure}\ 0, \\ \pmb{\alpha}_{\mathbb{P}}(v)&=u_i\ \ \mathrm{for\ almost\ all}\ v \in \Omega_{C} \cap \Delta_{\mathbb{P},i}, 
\end{align*} (following from \eqref{Cdeter} and \eqref{reverse&radial&Gauss}), and the fact that  the spherical Lebesgue measure of $\pmb{\alpha}^*_{\mathbb{P}}(\eta)$ is $0$ if  $\eta \subseteq \Omega_{C^\circ}$ such that $\eta \cap \{u_1, \cdots, u_m\}=\emptyset$. 

Following the technique used in \cite[Theorem 8.3]{LYZ18}, one can get the following uniqueness of solutions to the $L_p$ dual Minkowski problem for discrete measures.
 \begin{theorem}\label{uni&dis} 
Let $\{u_1, u_2,..., u_m\}\subset \Oc$, and $\mathbb{P}_1, \mathbb{P}_2 \in \mathscr{K}(C, \{u_1, u_2,..., u_m\})$  be two $C$-polytopes.  Suppose that $\widetilde{C}_{p, q}(\mathbb{P}_1, \cdot)=\widetilde{C}_{p, q}(\mathbb{P}_2, \cdot)$ on $\Omega_{C^\circ}$. Then $\mathbb{P}_1$ and $\mathbb{P}_2$ are dilates of each other if $p=q$, while $\mathbb{P}_1=\mathbb{P}_2$ if $p<q$.
\end{theorem}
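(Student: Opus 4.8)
The plan is to prove the two cases ($p=q$ and $p<q$) by a comparison argument analogous to the one underlying Theorem \ref{unicq}, but now adapted to the discrete setting of $C$-polytopes and the $(p,q)$-th dual curvature measure. First I would record the key algebraic identity that drives everything: since both polytopes are $C$-determined by the same finite set $\{u_1,\dots,u_m\}$, the measures $\widetilde{C}_{p,q}(\mathbb{P}_1,\cdot)$ and $\widetilde{C}_{p,q}(\mathbb{P}_2,\cdot)$ are both discrete and supported on (a subset of) $\{u_1,\dots,u_m\}$ by \eqref{dis&cpq}. Thus the hypothesis $\widetilde{C}_{p,q}(\mathbb{P}_1,\cdot)=\widetilde{C}_{p,q}(\mathbb{P}_2,\cdot)$ means exactly that the weights agree: writing $c_i^{(1)},c_i^{(2)}$ for the respective coefficients from \eqref{ci}, one has $c_i^{(1)}=c_i^{(2)}$ for every $i$.

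Next, for the case $p<q$, I would argue by contradiction assuming $\mathbb{P}_1\neq \mathbb{P}_2$, and set up the three comparison sets exactly as in Lemma \ref{Gam123}, namely the directions $\eta_1,\eta_2,\eta_3$ on which $h_C(\mathbb{P}_1,\cdot)$ is respectively greater than, less than, or equal to $h_C(\mathbb{P}_2,\cdot)$. Since both supports lie in $\{u_1,\dots,u_m\}$ these are finite index sets. After reducing to the situation where $\mathbb{P}_1,\mathbb{P}_2$ are not dilates (the homogeneity \eqref{homCpq} allows rescaling one of them), the radial comparison in part (a) of Lemma \ref{Gam123} — whose proof only uses that $h_C<0$ on $\Oc$ and so applies verbatim to the $C$-polytope setting — yields $\rho_C(\mathbb{P}_1,v)>\rho_C(\mathbb{P}_2,v)$ on $\pmb{\alpha}^*_{\mathbb{P}_2}(\eta_2)$ and the reverse inequality on $\pmb{\alpha}^*_{\mathbb{P}_1}(\eta_1\cup\eta_3)$, together with the inclusion from part (b). Then I would write the weight $\widetilde{C}_{p,q}(\mathbb{P}_i,\eta_2)=\frac1n\int_{\pmb{\alpha}^*_{\mathbb{P}_i}(\eta_2)}(-\pmb{\alpha}_{\mathbb{P}_i}(v)\cdot v)^{-p}\rho_C(\mathbb{P}_i,v)^{q-p}\,dv$ via Proposition \ref{integral}, rescale $\mathbb{P}_1$ so that the comparison is sharp, and chain the inequalities exactly as in Theorem \ref{unicq} to force a scaling factor $\lambda>1$; the symmetric argument with the roles reversed forces $\lambda<1$, giving the contradiction. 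I expect the main subtlety here to be the bookkeeping of the factor $(-\pmb{\alpha}_{\mathbb{P}_i}(v)\cdot v)^{-p}$, which did not appear in the pure dual case $p=0$ of Theorem \ref{unicq}: one must verify that for $v\in\pmb{\alpha}^*_{\mathbb{P}_1}(\eta_2)\cap\pmb{\alpha}^*_{\mathbb{P}_2}(\eta_2)$ the normal direction $\pmb{\alpha}_{\mathbb{P}_i}(v)$ is the same element $u_i$ of the common support for both polytopes, so that this density factor is identical for $\mathbb{P}_1$ and $\mathbb{P}_2$ and does not interfere with the strict monotonicity coming from $\rho_C(\mathbb{P}_1,v)^{q-p}>\rho_C(\mathbb{P}_2,v)^{q-p}$ when $q-p>0$.

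For the case $p=q$, the exponent $q-p$ on the radial function vanishes, so the radial-comparison engine breaks down and a different idea is needed. Here I would exploit that when $p=q$ the integrand reduces to $(-\pmb{\alpha}_{\mathbb{P}}(v)\cdot v)^{-p}$, which is intrinsic to the normal cone structure and carries no information distinguishing dilates; indeed the homogeneity \eqref{homCpq} gives $\widetilde{C}_{p,p}(\lambda\mathbb{P},\cdot)=\widetilde{C}_{p,p}(\mathbb{P},\cdot)$, so dilation-invariance is built in and one can at best hope to recover $\mathbb{P}_1,\mathbb{P}_2$ up to a dilation. To prove that the only ambiguity is a dilation, I would again use Lemma \ref{Gam123}: normalizing so that $\mathbb{P}_1,\mathbb{P}_2$ are not dilates produces nonempty $\eta_2$ with $\mathcal H^{n-1}(\pmb{\alpha}^*_{\mathbb{P}_1}(\eta_2))>0$ by part (c), and then I would compare $\widetilde{C}_{p,p}(\mathbb{P}_1,\eta_2)$ with $\widetilde{C}_{p,p}(\mathbb{P}_2,\eta_2)$ using the inclusion $\pmb{\alpha}^*_{\mathbb{P}_2}(\eta_2)\subseteq\pmb{\alpha}^*_{\mathbb{P}_1}(\eta_2)$ from part (b); since the integrand $(-\pmb{\alpha}_{\mathbb{P}}(v)\cdot v)^{-p}$ agrees for the two polytopes on the shared normal directions while the domains differ by a set of positive measure, one obtains a strict inequality contradicting the equality of weights, unless $\eta_2=\emptyset$. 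The hardest part of the whole argument will be this $p=q$ case, precisely because the usual strict monotonicity from the $\rho_C^{q-p}$ factor is absent; the contradiction must instead be extracted from the strict containment of the reverse-radial-Gauss images guaranteed by parts (b) and (c) of Lemma \ref{Gam123}, and I would need to check carefully that the positive-measure gap $\pmb{\alpha}^*_{\mathbb{P}_1}(\eta_2)\setminus\pmb{\alpha}^*_{\mathbb{P}_2}(\eta_2)$ contributes strictly positive mass to $\widetilde{C}_{p,p}(\mathbb{P}_1,\eta_2)$, which is where the factor $(-\pmb{\alpha}_{\mathbb{P}_1}(v)\cdot v)^{-p}>0$ on $\Oc$ is used.
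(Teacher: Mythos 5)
There is a genuine gap, and it sits exactly where you flagged ``the main subtlety'': your argument needs that for $v$ in the overlap $\pmb{\alpha}^*_{\P_1}(\eta_2)\cap\pmb{\alpha}^*_{\P_2}(\eta_2)$ the two polytopes have the same normal, $\pmb{\alpha}_{\P_1}(v)=\pmb{\alpha}_{\P_2}(v)$, but this is false in general. The facet-cone decompositions $\Omega_C=\bigcup_i(\Omega_C\cap\Delta_{\P_k,i})$ of $\P_1$ and $\P_2$ are different whenever the polytopes are not dilates, so a direction $v$ typically lies over a facet of $\P_1$ with normal $u_i$ and over a facet of $\P_2$ with a different normal $u_j$, even when both $u_i,u_j\in\eta_2$ (already visible for two normals in the plane: take $\P_1$ with a long $u_1$-facet and short $u_2$-facet and $\P_2$ the other way around). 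Consequently the density factors $(-\pmb{\alpha}_{\P_1}(v)\cdot v)^{-p}$ and $(-\pmb{\alpha}_{\P_2}(v)\cdot v)^{-p}$ need not coincide at shared $v$, and for $p\neq 0$ of either sign they compare in no fixed direction. This breaks both of your cases: in the $p<q$ chain you cannot substitute $\rho_C(\B_1,v)^{q-p}$ for $\rho_C(\P_2,v)^{q-p}$ while carrying $\P_2$'s density factor and then enlarge the domain to $\pmb{\alpha}^*_{\B_1}(\eta_2)$ with $\B_1$'s density factor; and in the $p=q$ case, where the integrand \emph{is} this density, the positive mass gained on $\pmb{\alpha}^*_{\P_1}(\eta_2)\setminus\pmb{\alpha}^*_{\P_2}(\eta_2)$ can in principle be offset by a pointwise decrease of the density on the overlap, so no strict inequality follows from parts (b) and (c) of Lemma \ref{Gam123} alone. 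A secondary issue: Lemma \ref{Gam123} and Theorem \ref{unicq} are formulated for $(C,q)$-close sets with $q>0$, whereas the theorem here allows arbitrary $p<q$, including $q\leq 0$; rewriting the integrand as $(-\pmb{\alpha}_{\A}(v)\cdot v)^{-p}\rho_C(\A,v)^{q-p}$ fixes the sign of the radial exponent but reintroduces exactly the density mismatch above.

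The paper's proof avoids any cross-comparison of normals. Working directly with the discrete weights in \eqref{ci} and \eqref{dis&cpq}, it assumes the polytopes are not dilates (for $p<q$ this follows from $\P_1\neq\P_2$ via the homogeneity \eqref{homCpq}), chooses $a_0>0$ with $\P_1\subsetneq a_0\P_2$ touching, so that the index set $\Sigma=\{j:h_C(\P_1,u_j)=h_C(a_0\P_2,u_j)\}$ is a nonempty proper subset of $\{1,\dots,m\}$, and picks $j_0\in\Sigma$ with $\Delta_{\P_1,j_0}\subsetneq\Delta_{a_0\P_2,j_0}$. At such a touching normal the factors $[-h_C(\cdot,u_{j_0})]^{-p}$ agree \emph{exactly}, and $\rho_C(\P_1,v)=\rho_C(a_0\P_2,v)$ on $\Omega_C\cap\Delta_{\P_1,j_0}$ because both boundary points lie in the common supporting hyperplane $H(u_{j_0},h_C(\P_1,u_{j_0}))$; the strict inequality $c_{j_0,2}>c_{j_0,1}$ (for $p=q$), respectively $c_{j_0,2}>a_0^{p-q}c_{j_0,1}$ forcing $a_0>1$ (for $p<q$), thus comes solely from the extra cone region over the larger facet with the \emph{same} normal $u_{j_0}$, and the conclusion follows by swapping the roles of $\P_1$ and $\P_2$. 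If you want to salvage your Zhao-style route, you must localize the comparison at a single common normal in exactly this way, which is the mechanism of the paper's argument following \cite[Theorem 8.3]{LYZ18}; as written, your proposal does not yield either case of the theorem.
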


\begin{proof} For $k=1, 2$, by \eqref{ci} and \eqref{dis&cpq},  
\begin{equation*} 
\widetilde{C}_{p, q}(\P_k, \cdot)=\sum_{i=1}^{m} c_{i, k} \delta_{u_i} 
\quad \text{with}\quad
c_{i, k}=\frac{1}{n}\left[-h_C(\P_k, u_i)\right]^{-p} \int_{\Omega_{C} \cap \Delta_{\P_k, i}}  \rho_C(\P_k, v)^qdv.
\end{equation*}

Firstly we let  $p=q$. We need to prove that $\P_1$ and $\P_2$ are dilates of each other. To this end, assume the opposite, i.e., $\P_1$ and $\P_2$ are not dilates of each other, and then $\P_1\neq a \P_2$ for any $a>0.$ Thus, a constant $a_0>0$ can be found such that $\P_1\subsetneq  a_0\P_2$ satisfy the following property: the set  $$\Sigma=\{j\in \{1, \cdots, m\}: h_C(\P_1, u_j)=h_C(a_0\P_2, u_j)\}$$ is a nonempty proper subset of $\{1, \cdots, m\}.$  Clearly, as $\P_1\subsetneq  a_0\P_2$,  $h_C(\P_1, u_j)<h_C(a_0\P_2, u_j)$ for $j\in \{1, \cdots, m\}\setminus \Sigma$. Due to the fact that $\Sigma\subsetneq \{1, \cdots, m\}$, 
there exists $j_0\in \Sigma$  such that  $\Delta_{\P_1, j_0}\subsetneq \Delta_{a_0\P_2, j_0}$. 
Hence, for $p=q$, 
\begin{align}
c_{j_0, 2}&=\frac{1}{n}\left[-h_C(\P_2, u_{j_0})\right]^{-p} \int_{\Omega_{C} \cap \Delta_{\P_2, j_0}}\rho_C(\P_2, v)^qdv \nonumber \\ 
&=\frac{1}{n}\left[-h_C(a_0\P_2, u_{j_0})\right]^{-p} \int_{\Omega_{C} \cap \Delta_{a_0\P_2, j_0}}\rho_C(a_0\P_2, v)^qdv \nonumber  \\ 
&>\frac{1}{n}\left[-h_C(\P_1, u_{j_0})\right]^{-p} \int_{\Omega_{C} \cap \Delta_{\P_1, j_0}}\rho_C(\P_1, v)^qdv = c_{j_0, 1},\label{p=q--1}
\end{align} where in the last inequality, we also use the fact that $\rho_C(\P_1, v)=\rho_C(a_0\P_2, v)$ for $v\in \Omega_{C} \cap \Delta_{\P_1, j_0}.$ That is, a contradiction  with $c_{j_0, 2}=c_{j_0, 1}$ is obtained, and thus  $\P_1$ and $\P_2$ are dilates of each other if $p=q$. 

Now for $p<q$, we need to prove $\P_1=\P_2$. Assume the opposite, i.e., $\P_1\neq \P_2$. It is clear that, due to \eqref{homCpq}, $\P_1$ and $\P_2$ are not dilates of each other. Again, let $a_0>0$ be such that $\P_1\subsetneq a_0\P_2$ and $\Sigma$ is a nonempty proper subset of $\{1, \cdots, m\}.$ Following 
the calculation in \eqref{p=q--1}, by $ c_{j_0, 1}= c_{j_0, 2}$, one gets, for $p<q$,
\begin{align*}
c_{j_0, 2} &= \frac{a_0^{p-q} }{n}\left[-h_C(a_0\P_2, u_{j_0})\right]^{-p} \int_{\Omega_{C} \cap \Delta_{a_0\P_2, j_0}}\rho_C(a_0\P_2, v)^qdv \nonumber  \\ &>\frac{a_0^{p-q} }{n}\left[-h_C(\P_1, u_{j_0})\right]^{-p} \int_{\Omega_{C} \cap \Delta_{\P_1, j_0}}\rho_C(\P_1, v)^qdv =a_0^{p-q}  c_{j_0, 1}.
\end{align*} This gives $a_0>1$, due to $p<q$. Thus, $\P_1\subsetneq a_0\P_2\subsetneq \P_2.$ 

By switching the roles of $\P_1$ and $\P_2$, one should get $\P_2\subsetneq \P_1$. This is impossible in view of  $\P_1\subsetneq \P_2$. Therefore, we prove  $\P_1=\P_2$. 
\end{proof}
 
Our last result is the following uniqueness result regarding the $L_p$ dual Minkowski problem of $(C, p, q)$-close sets which have enough smoothness in $\Omega_C$. Chen and Tu \cite{CT24}, independently, studied the uniqueness result for $q\geq p\geq 1$ by assuming that the support functions are $C^{\infty}$ in the smooth and strictly convex domain $\Oc$. 

\begin{theorem}\label{uni&C2} 
Let $p<q$ be two real numbers. If $\A_1, \A_2$ are two $(C, p, q)$-close sets such that both $\widetilde{C}_{p, q}(\A_1, \cdot)$ and $\widetilde{C}_{p, q}(\A_2, \cdot)$ are positive measures on $\Oc$, $\widetilde{C}_{p, q}(\A_1, \cdot)=\widetilde{C}_{p, q}(\A_2, \cdot)$ on $\Omega_{C^\circ}$, and their support functions are strictly negative and are in $C^2$, then $\A_1=\A_2$.
\end{theorem}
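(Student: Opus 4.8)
The plan is to run a maximum principle argument on the ratio of the two support functions, exploiting that $p<q$ makes the comparison of Monge--Amp\`ere determinants strict in the correct direction. Write $h_i=h_C(\A_i,\cdot)<0$ on $\Oc$ and $Q_i=\bar\nabla^2 h_i+h_iI$. Since $\widetilde{C}_{p,q}(\A_1,\cdot)=\widetilde{C}_{p,q}(\A_2,\cdot)$ are positive measures and each $h_i\in C^2$, the two measures share a common density $f>0$ with respect to $du$, and both $h_1,h_2$ solve \eqref{MA-unbounded-1} classically with this same $f$. In particular $(-h_i)^{1-p}\det Q_i=f\,(h_i^2+|\bar\nabla h_i|^2)^{(n-q)/2}>0$, so each $Q_i$ is positive definite throughout $\Oc$. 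Because a $C$-compatible set is determined by its support function on $\Oc$, it suffices to prove $h_1\equiv h_2$.

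Next I would study $\Phi=h_1/h_2>0$ on $\Oc$. At an interior point $u_0$ where $\Phi$ attains a maximum, $\bar\nabla\Phi=0$ gives $\bar\nabla h_1=\Phi(u_0)\,\bar\nabla h_2$ and $h_1=\Phi(u_0)\,h_2$, while the critical-point identity $\bar\nabla^2\Phi=h_2^{-1}(\bar\nabla^2 h_1-\Phi\,\bar\nabla^2 h_2)$ together with $\bar\nabla^2\Phi\le0$ and $h_2<0$ yields $\bar\nabla^2 h_1\ge\Phi(u_0)\,\bar\nabla^2 h_2$. Adding the linear terms gives the matrix inequality $Q_1\ge\Phi(u_0)\,Q_2$ at $u_0$, hence $\det Q_1\ge\Phi(u_0)^{\,n-1}\det Q_2$, both matrices being positive definite and $(n-1)\times(n-1)$. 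On the other hand, substituting $h_1=\Phi h_2$ and $\bar\nabla h_1=\Phi\,\bar\nabla h_2$ into the two equations and using the homogeneities $h_1^2+|\bar\nabla h_1|^2=\Phi^2(h_2^2+|\bar\nabla h_2|^2)$ and $-h_1=\Phi(-h_2)$ gives the exact identity $\det Q_1/\det Q_2=\Phi(u_0)^{\,n+p-q-1}$. Comparing the two relations yields $\Phi(u_0)^{\,p-q}\ge1$, and since $p<q$ the exponent is negative, forcing $\Phi(u_0)\le1$; thus $\sup_{\Oc}\Phi\le1$. Running the identical computation at an interior minimum (where $\bar\nabla^2\Phi\ge0$ reverses the inequalities) gives $\inf_{\Oc}\Phi\ge1$. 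Therefore $\Phi\equiv1$, i.e. $h_1\equiv h_2$ and $\A_1=\A_2$.

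The step I expect to be the main obstacle is guaranteeing that the extrema of $\Phi$ are actually attained at interior points of $\Oc$. Because $h_1(u),h_2(u)\to0$ as $u\to\partial\Oc$, the ratio $\Phi$ is an indeterminate form on $\partial\Oc$, so a priori $\sup_{\Oc}\Phi$ or $\inf_{\Oc}\Phi$ could be approached only along sequences tending to $\partial\Oc$, in which case the maximum-principle computation above never applies. To handle this I would, for the supremum, suppose $M=\sup\Phi>1$ and consider $v=h_1-Mh_2$, which is nonnegative on $\Oc$, tends to $0$ at $\partial\Oc$, and is strictly positive in the interior precisely when the supremum is not attained; I would then analyze the degeneration of \eqref{MA-unbounded-1} near $\partial\Oc$, where the vanishing rates of $h_1$ and $h_2$ are both governed by the cone $C^\circ$, to rule out such a boundary-only supremum, equivalently to force an interior zero of $v$. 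The positivity of the measures and the strict negativity and $C^2$-regularity of the support functions are exactly the hypotheses that keep the equation uniformly elliptic on compact subsets of $\Oc$ and control the boundary behaviour, and it is this boundary analysis, rather than the algebra of the determinant comparison, that I expect to be the delicate part of the argument.
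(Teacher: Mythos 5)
Your core computation is correct and, once unwound, it is essentially the paper's own argument in a different normalization. The paper assumes $\A_1\neq\A_2$, picks $\lambda_0>0$ so that $\A_3=\lambda_0\A_1\subsetneq\A_2$ with nonempty contact set $\Sigma_1=\{u\in\Oc: h_C(\A_3,u)=h_C(\A_2,u),\ \nabla h_C(\A_3,u)=\nabla h_C(\A_2,u)\}$; since the support functions are negative, such a $\lambda_0$ is exactly $\sup_{u\in\Oc} h_C(\A_2,u)/h_C(\A_1,u)$, so the paper's contact point $u_0$ is precisely an interior point where your ratio attains an extremum (an extremum of $1/\Phi$, in your notation). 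At $u_0$ the paper notes that $G=h_C(\A_2,\cdot)-h_C(\A_3,\cdot)$ has a local minimum, so the difference of Hessians is positive semi-definite and $D(\A_3,u_0)\leq D(\A_2,u_0)$ --- this is your matrix inequality $Q_1\geq\Phi(u_0)\,Q_2$ together with the monotonicity of $\det$ on positive semi-definite matrices --- and then substitutes into the pointwise density identity \eqref{*} to conclude $\lambda_0^{q-p}\leq 1$, which is your relation $\Phi(u_0)^{\,p-q}\geq 1$; switching the roles of $\A_1$ and $\A_2$ plays the role of your argument at the other extremum. Even the use of positivity of the measures to guarantee a nonvanishing density at the contact point ($D(\A_1,u_0)>0$ in the paper, $f(u_0)>0$ for you) is the same.

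The genuine gap is the one you flag yourself and then leave unresolved: you never prove that $\sup_{\Oc}\Phi$ or $\inf_{\Oc}\Phi$ is attained at an interior point. Since $h_1,h_2\to 0$ toward $\partial\Oc$, the ratio is an indeterminate form at the boundary, and without interior attainment the maximum-principle computation has no point at which to be applied; announcing the ``main obstacle'' and replacing it with a plan (analyzing the vanishing rates of \eqref{MA-unbounded-1} near $\partial\Oc$) leaves the proof incomplete at its load-bearing step. In fairness, the paper treats the very same point extremely lightly: it asserts the existence of $\lambda_0$ with $\Sigma_1$ a nonempty proper subset of $\Oc$ ``due to the smoothness of $\partial\A_1$ and $\partial\A_2$,'' with no boundary analysis --- the existence of the paper's touching dilate and the attainment of your extremum are literally the same statement. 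If you want to close your route in the paper's style, set $\lambda_0=\sup_{u\in\Oc} h_C(\A_2,u)/h_C(\A_1,u)$, argue that it is finite and attained at some $u_0\in\Oc$, and observe that at $u_0$ both the value and the gradient of $\lambda_0 h_C(\A_1,\cdot)-h_C(\A_2,\cdot)$ vanish; from there your determinant comparison finishes the proof verbatim.
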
 

In order to prove Theorem \ref{uni&C2}, we need some preparation of notations and background, which can be found in \cite{Sch14} . Denote by $\overline{\mathbb{R}}=\mathbb{R}\cup\{\infty\}$. We say $f: \mathbb{R}^n \rightarrow \overline{\mathbb{R}}$ a convex function, if for all $0\leq \lambda\leq 1$ and $x, y \in \mathbb{R}^n$, 
\begin{equation*}
f(\lambda x+(1-\lambda)y)\leq \lambda f(x)+(1-\lambda)f(y).
\end{equation*} The domain of $f$ is defined by $\mathrm{dom}(f) =\{x\in \mathbb{R}^n: f(x)<\infty\}$. The subdifferential of $f$ at $x \in  \mathrm{int} (\mathrm{dom}(f))$ is given by
\begin{equation*}
\partial f(x)=\{ z \in \mathbb{R}^n: f(y)\geq f(x)+z \cdot (y-x)\,\,\, \text{for all}\,\,\, y \in  \mathbb{R}^n\}.
\end{equation*} If $\partial f(x)$ is a singleton set, then $f$ is said to be differentiable at $x$. In this case, we use $\nabla f(x)$ to denote the gradient function of $f$ at $x$. Thus, 
\begin{equation*}
f(y)=f(x)+\nabla f(x) \cdot (y-x)+o(|y-x|)\quad \text{as} \quad y\rightarrow x. 
\end{equation*}

Let $\A\in \Ln$ be a $C$-compatible set. Define the support set of $\A$ at $u\in \Oc$ by $$F(\A, u)=\partial \A \cap H(\A, h_C(\A, u)).$$ As in \cite[Theorem 1.7.4]{Sch14} for convex bodies, one can also get, $\partial h_C(\A, u)=F(\A, u)$ for all $u \in \Omega_{C^\circ}$. Consequently,   $h_C(\A, u)$ is differentiable at $u \in \Omega_{C^\circ}$ if and only if  $F(\A, u)$ is a singleton set, say $F(\A, u)=\{z\}$.  Then, $z=\nabla h_C(\A, u)$.  

Let $\A$ be a  $(C, p, q)$-close set such that $h_C(\A, \cdot) $ is $ C^2$ and is strictly negative. Thus, $\partial \A\subset \text{int}C$ and, for $u \in \Omega_{C^\circ}$,  there is unique 
$x \in \partial \A$ such that 
\begin{equation}\label{gradient}
x=\nabla h_C(\A, u).
\end{equation} Recall that $H^{-}_t=H^{-}(\xi, t)$ with $\xi \in \Omega_{C}$, such that $x \cdot \xi>0$ for all $x \in C\setminus\{o\}$. Thus, there exists $t_0>0$ such that $\A \cap H^-_{t_0}\neq \emptyset$. Take an increasing sequence $\{t_l\}_{l \in \mathbb{N}}$ with $t_1\geq t_0$ satisfying $t_l \rightarrow \infty$ as $l \rightarrow \infty$. For each $l \in \mathbb{N}$,  $\A \cap H^{-}_{t_l} \subset C_{t_l}=C\cap H^{-}_{t_l}$ forms a convex body. Together with the argument in the proof of \cite[Theorem 6.5]{CVPDEII} (see the second paragraph in page 28), one sees that the surface area measure $S_{n-1}(\A, \cdot)$ is absolutely continuous with respect to $\mathscr{H}^{n-1}$ with a continuous density $D(\A, \cdot)$ on $\Omega_{C^\circ}$ (one may need to use the fact that, for each Borel set $\eta \subseteq \Omega_{C^\circ}$ with its closure $\bar{\eta}\subset\Oc$, one can find a $l\in \mathbb{N}$ big enough, such that $S_{n-1}(\A, \eta)=S_{n-1}(\A \cap  H^{-}_{t_l}, \eta)$). Note that $D(\A, u)$ equals  the product of the principal radii of curvature of $\A$  at $u\in \Oc$. We denote by $\mathbb{H}(\A, u)$ the Hessian matrix of $h_{C}(\A, \cdot)$ at $u\in \Oc$. From \cite[page 31, Note 3]{Sch14}, by letting $f=h_{C}(\A, \cdot), Af=\mathbb{H}(\A,\cdot), y=u$ and $x = u_0$ for $u, u_0 \in \Oc$, we have 
$$ h_C(\A, u)=h_C(\A,u_0)+\nabla h_C(\A, u_0)(u-u_0)+\frac{1}{2}(u-u_0)^T \mathbb{H}(\A,u_0)(u-u_0)+o(|u-u_0|^2).$$ 

We can adapt the proof of \cite[Theorem 6.5]{CVPDEII} to prove Theorem \ref{uni&C2}. 
  
\begin{proof}[Proof of Theorem \ref{uni&C2}.]  Let $p<q$ be two real numbers. Let $\A_1, \A_2$ be two $(C, p, q)$-close sets such that $\widetilde{C}_{p, q}(\A_1, \cdot)=\widetilde{C}_{p, q}(\A_2, \cdot)$ on $\Omega_{C^\circ}$, and their support functions are strictly negative and are in $C^2$. As discussed above,  for $i=1, 2$, $\partial \A_i\subset \Omega_C$, and $S_{n-1}(\A_i, \cdot)$ are absolutely continuous with respect to the spherical Lebesgue measure with continuous density functions $D(\A_i, \cdot)$. It follows from \eqref{borel&qL}, \eqref{gradient} and $\widetilde{C}_{p, q}(\A_1, \cdot)=\widetilde{C}_{p, q}(\A_2,\cdot)$ that, for $u \in \Omega_{C^\circ}$, 
\begin{equation}\label{*}
[-h_C(\A_1, u)]^{1-p}|\nabla h_C(\A_1, u)|^{q-n}D(\A_1, u)=[-h_C(\A_2, u)]^{1-p}|\nabla h_C(\A_2, u)|^{q-n}D(\A_2, u).
\end{equation}
 
We need to prove that $\A_1=\A_2$. Assume the opposite, i.e., $\A_1\neq \A_2$. Due to the homogeneity (see \eqref{homCpq}), $\A_1$ and $\A_2$ are not dilated of each other. Thus, one can find a constant $\lambda_0>0$ such that $\A_3=\lambda_0\A_1\subsetneq \A_2$ satisfying the set   
\begin{equation}\label{u_0}
\Sigma_1=\{u\in \Oc: h_C(\A_3, u)=h_C(\A_2, u)\,\,\,\text{and}\,\,\, \nabla  h_C(\A_3, u)=\nabla h_C(\A_2, u)\}
\end{equation} is a nonempty proper subset of $\Oc$, due to the smoothness of $\partial \A_1$ and $\partial \A_2.$ 

Let $u_0\in \Sigma_1.$ We claim that $D(\A_3, u_0) \leq D(\A_2, u_0)$. Let $$U=\{u_0+tv: t>0\ \text{and}\ v\in \Omega_{C^\circ}\}\subset \mathrm{int}C^{\circ}$$ be a neighbourhood of $u_0$.
Define $G(u)=h_C(\A_2, u)-h_C(\A_3, u)$ for
$u\in U$. Then $G(u)\geq 0$ for all $u\in U$, $G(u_0)=0$ and $\nabla G(u_0)=0$ by \eqref{u_0}. Consequently, $G(u)$ attains local minimum at $u_0\in U$, and the Hessian of $G(\cdot)$ is positive semi-definite at $u_0\in U$. Hence, $v^T (\mathbb{H}(\A_2,u_0)-\mathbb{H}(\A_3, u_0))v\, \geq 0$ for all $v\in \Omega_{C^\circ}$. We can then use \cite[Corollary 2.5.2]{Sch14} to get that $u_0$ is an eigenvalue of both $\mathbb{H}(\A_2,u_0)$ and $\mathbb{H}(\A_3, u_0)$, and $D(\A_i, u_0)$ is the product of other nonzero eigenvalues of $\mathbb{H}(\A_i,u_0)$, $i=2, 3.$ This yields that $
D(\A_3, u_0)\leq D(\A_2, u_0).$   Together with \eqref{homCpq}, 
\eqref{*} and \eqref{u_0}, one has  
\begin{align*}
[-h_C(\A_1, u_0)]^{1-p}|\nabla h_C(\A_1, u_0)|^{q-n} D(\A_1, u_0) 
=&[-h_C(\A_2, u_0)]^{1-p}|\nabla h_C(\A_2, u_0)|^{q-n} D(\A_2, u_0)\\
=&[-h_C(\A_3, u_0)]^{1-p}|\nabla h_C(\A_3, u_0)|^{q-n} D(\A_2, u_0)\\
\geq& [-h_C(\A_3, u_0)]^{1-p}|\nabla h_C(\A_3, u_0)|^{q-n} D(\A_3, u_0)\\
=& \lambda_0^{q-p}[-h_C(\A_1, u_0)]^{1-p}|\nabla h_C(\A_1, u_0)|^{q-n} D(\A_1, u_0).
\end{align*} As $\widetilde{C}_{p ,q}(\A_1, \cdot)$ is a positive measure on $\Oc$, one gets $D(\A_1, u_0)>0$ and  then  $\lambda_0^{q-p} \leq 1$. Thus, $\lambda_0\leq 1$ as $q-p>0$, which further yields  $\A_1\subseteq \lambda_0\A_1 =\A_3 $. Due to $\A_3\subsetneq \A_2$, one gets $\A_1\subsetneq \A_2.$ 

By switching the roles of $\A_1$ and $\A_2$, one can also get $\A_2\subsetneq \A_1,$ which is impossible. This concludes  $\A_1=\A_2$ as desired. 
\end{proof}

\vskip 2mm \noindent  {\bf Acknowledgement.}   The research of DY is supported by an NSERC grant, Canada.

\noindent Wen Ai, \ \ {\tt wena@mun.ca} \\ 
Department of Mathematics and Statistics, Memorial
University of Newfoundland, St. John's, Newfoundland A1C 5S7, Canada.  
\vskip 2mm

\noindent Yunlong Yang, \ \ {\tt ylyang@dlmu.edu.cn} \\ 
School of Science, Dalian Maritime University, Dalian, 116026, PR China.  
\vskip 2mm 

\noindent Deping Ye, \ \ {\tt deping.ye@mun.ca}\\
Department of Mathematics and Statistics, Memorial
University of Newfoundland, St. John's, Newfoundland A1C 5S7, Canada.  
\end{document}